\newtheorem{thm}{Theorem}[section]
\newtheorem{cor}[thm]{Corollary}
\newtheorem{lem}[thm]{Lemma}
\newtheorem{cla}{Claim}
\newtheorem{defn}[thm]{Definition}
\newtheorem{rem}[thm]{Remark}
\begin{document}

\title{From Rational \g Logic to Ultrametric Logic}
\author[1]{S. M. A. Khatami \thanks{E-mail: amin\_khatami@aut.ac.ir}}
\author[1]{M. Pourmahdian \thanks{Corresponding author~~~~E-mail: pourmahd@ipm.ir}}
\author[2]{N. R. Tavana\thanks{E-mail: nazanin.r.tavana@ipm.ir}}
\affil[1]{Department of Mathematics and Computer Science, Amirkabir
University of Technology, Tehran,
              Iran}
\affil[2]{School of Mathematics, Institute for Research in
              Fundamental Sciences (IPM), Tehran, Iran}

\renewcommand\Authands{ and }

%\date{Received: date / Accepted: date}
\date{}
\maketitle

\begin{abstract}
This paper is devoted to systematic studies of some extensions of
first-order \g logic. The first extension is the first-order
rational \g logic which is an extension of first-order \g logic,
enriched by countably many nullary logical connectives. By introducing
some suitable semantics and proof theory, it is shown that the
first-order rational \g logic has the completeness property, that is
any (strongly) consistent theory is satisfiable. Furthermore, two
notions of entailment and strong entailment are defined and their
relations with the corresponding notion of proof is studied. In
particular, an approximate entailment-compactness is shown. Next, by
adding a binary predicate symbol $d$ to the first-order rational \g
logic, the ultrametric logic is introduced. This serves as a
suitable framework for analyzing structures which carry an
ultrametric function $d$ together with some functions and predicates
which are uniformly continuous with respect to the ultrametric $d$.
Some model theory is developed and to justify the relevance of this
model theory, the Robinson joint consistency theorem is proven.\\
{\bf{Keywords: }}{ First-order \g logic, Continuous metric logic,
Ultrametric logic, Robinson consistency theorem}\\
{\bf{2010MSC: }}{03B50, 03C90}
\end{abstract}

\section{Introduction}
The goal of this paper is twofold. On one hand, we are aiming to
study some logical aspects of extensions of first-order \g logic
and on the other hand, to introduce an alternative framework for
(continuous) metric logic. The fuzzy logic is a branch of
mathematical logic dealing with systematic studies of logical
aspects of different types of many-valued logics, whose main
emphasis rely mostly on introducing a relevant semantics for a given
many-valued logic, exploring a (complete) set of axiomatic system
and ultimately proving some kind of completeness results
\cite{hajek98,baaz2007first}. The \lo and \g  logics
\cite{rose58,chang1959new,chang63,Dummett59,horn69} are two
important examples of fuzzy logics. The BL-algebras serve as the
most general form of the set of truth values for different types of
fuzzy logic \cite{hajek98}. However, in many important cases such as
\lo logic, it suffices to consider the standard truth valued set
$[0,1]$. The semantics based on this set is called the standard
semantics. If one considers the compact Euclidian topology on
$[0,1]$, then one of the distinguished characters of the \lo logic
as well as its extension Pavelka logic is that all the logical
connectives including implication are continuous functions. This
simple but fundamental fact implies some of the important results
including the Pavelka style completeness theorem for standard
semantics of these logics \cite{pavelka79,hajek98}.

The starting step of this paper is to study the first-order rational
\g logic. The relevance of this extension to \g logic is the same as
the rational Pavelka logic to \lo logic. To accomplish our goal, the
first-order \g logic is enriched by countably many nullary
connectives which are naturally interpreted by the rational numbers
in the truth value set $[0,1]$ \cite{esteva2009}. There are some
major differences between the rational \g logic and Pavelka logic.
The first major difference is that the standard semantics should be
replaced by some "non-standard" semantics. For this, one should
first consider the set $\mathbb{I}=[0,1]^2\setminus\{(0,r):r>0\}$ which is
the set of truth values in rational \g logic. So, in this logic, the
predicates are interpreted in the set $\mathbb{I}$. Secondly, the
lack of the continuity of logical connectives with respect to the
order topology on the set of truth values, which causes the loss of
Pavelka style completeness of this logic.

The second step of this paper is to choose an alternative setting to
extend the ideas from (continuous) metric logic to the first-order
rational \g logic. A common trend in the current development of
model theoretic investigations is to implement the logical ideas
with an eye towards interesting mathematical structures.
(Continuous) Metric logic
\cite{benust10,benusthenbre08,benped10,changKeisler66} aims to
extend this new trend of model theory to a class of mathematical
structures, which naturally arose in analysis. This research pass
has already shed a new light on current studies in model theory. The
logical formalism in which this model theory is based on is an
important instant of many-valued logic, namely \lo logic
\cite{lukasiewiczlogice,rose58,chang1959new,chang63,changKeisler66,pavelka79}.
Loosely speaking, a metric structure in this logic consists of a
(complete) metric space $(M,d:X\times X \rightarrow [0,1])$ together
with some additional relations and functions which are uniformly
continuous with respect to the metric function $d$. In this sense,
the (continuous) metric logic is the model theory of (complete)
metric structures.

The starting point in (continuous) metric logic relies heavily on
this fact as how to modify the \lo logic with the standard semantics
so that the "fuzzy" interpretation of the equality relation coincides
with the notion of metric \cite{benust10}.

An important subclass of metric spaces consists of ultrametric
spaces. Recall that a metric space $(M,d)$ is an ultrametric if it
satisfies a stronger form of the triangle inequality in which for
any $a,b,c\in M$,
\begin{center}
$d(a,b)\leq  \max(d(a,c),d(c,b))$.
\end{center}
So, in particular, an ultrametric structure can be defined in a
similar fashion as above. In this sense, these structures can also
be studied within the (continuous) metric logic.

If one modifies the \g logic in the same way as \lo logic, the
interpretation of the equality predicate coincides with the notion
of ultrametric. That is why one may believe this is a natural
approach towards studying ultrametric structures. To emphasize on
this natural approach, this new logic is called the ultrametric
logic.

This paper is organized as follows. The  next section is devoted to
study the first-order rational \g logic. First, by defining a suitable \
semantic and giving an axiomatization system, it is shown that the
first-order rational \g logic satisfies that completeness \
theorem (Theorem \ref{completeness of rgl}). Then, two notions of entailment
and strong entailment are discussed and their connections to the
corresponding notion of proof are explored. In particular, it is proven
that the  first-order rational \g logic satisfies the approximate
entailment compactness (Theorem \ref{entailment completeness}).

The third and fourth sections include the key notion of ultrametric logic
and some model theoretic machineries developed for proving
the Robinson joint consistency theorem for this
logic (Theorem \ref{Robinson}).

The paper is concluded by given some guidelines as how to proceed this research
further in order to explore the relevance of this logic for understanding the
interesting mathematic structures.
\section{Rational G\"{o}del Logic}\label{section godel logic}
In this section the first-order rational \g logic is given. The
syntactical issues of this logic is the same as the usual first-order
\g logic. However, the semantical aspects which is given here
are somewhat different from the usual approach if fuzzy logic.
Opposite to the usual conventions in
fuzzy logic, the nullary connective $\bar{0}$ is assumed as the
absolute truth, while $\bar{1}$ as the absolute falsity.
Moreover a formula is satisfied in a model $\mathcal{M}$
if its interpretation in $\mathcal{M}$ is zero. This
approach is taken from the (continuous) metric logic and
will be justified when the ultrametric logic is introduced.
Furthermore, it would be very easy to see that all of the results
in this section can be translated to usual semantical approach in
(fuzzy) first-order \g logic.

Throughout this paper, suppose $\mathcal{L}$ is a first-order
language with countably many predicate, function and constant
symbols. As usual, we also assume a countable set of variables
together with the set of boolean connectives $\{\vee, \wedge, \to,
\neg, \bar{0}, \bar{1}\}$ and the set of quantifiers $\{\forall,
\exists\}$. The rational \g logic $RGL^*$ is obtained by
extending the set of nullary connectives $\{\bar{0}, \bar{1}\}$ with
the set $\mathcal{A}=\{\bar{r}~: ~r\in
[0,1]_{\mathbb{Q}}=\mathbb{Q}\cap[0,1]\}$ of nullary connectives.
The corresponding notions of $\mathcal{L}$-terms, (atomic)
$\mathcal{L}$-formulas and subformulas are defined as usual. In
particular, the notion of free variables, bound variables and
sentences, i.e., formulas without free variables are considered as
classical first-order logic. The set of $\mathcal{L}$-formulas and
$\mathcal{L}$-sentences are denoted by $Form(\mathcal{L})$ and
$Sent(\mathcal{L})$, respectively. For an $\mathcal{L}$-formula
$\varphi$, $Sub(\varphi)$ is the set of subformulas of $\varphi$. An
$\mathcal{L}$-theory is an arbitrary set of $\mathcal{L}$-sentences.

Below, the semantical aspects of
$RGL^*$ are introduced. Unlike the standard semantics, the set
$\mathbb{I}=[0,1]^2\setminus\{(0,r):r>0\}$ with the lexicographical ordering
is taken as the set of truth values, since as otherwise the
compactness theorem fails if the standard set of truth values
$[0,1]$ is assumed. For simplicity, we use the notation $\hat{r}$
instead of $(r,r)\in\mathbb{I}$.
\begin{defn}
An $\mathcal{L}$-\emph{structure} $\mathcal{M}$ is a nonempty set M
called the universe of $\mathcal{M}$ together with:
  \begin{itemize}
  \item[a)] for any n-ary predicate symbol $P$ of $\mathcal{L}$, a function $P^{\mathcal{M}}:M^n\to\mathbb{I}$,
  \item[b)] for any n-ary function symbol $f$ of $\mathcal{L}$, a function $f^{\mathcal{M}}:M^n\to M$,
  \item[c)] for any constant symbol c of $\mathcal{L}$, an element $c^{\mathcal{M}}$ in the universe of $\mathcal{M}$.
  \end{itemize}
For each $\alpha\in\mathcal{L}$, $\alpha^\mathcal{M}$ is called the
\emph{interpretation} of $\alpha$ in $\mathcal{M}$.
\end{defn}
An $\mathcal{M}$-\emph{assignment} of variables is a function
$\sigma$ from the set of variables into the set $M$. The
interpretation of $\mathcal{L}$-terms is defined as follows.
\begin{defn}
Let $\mathcal{M}$ and $\sigma$ be as above and let
$\bar{x}=(x_1...x_n)$. Then, for every term $t(\bar{x})$,
\begin{enumerate}
\item if $t(\bar{x})=x_i$ for $1\le i\le n$, then $t^{\mathcal{M},\sigma}(\bar{x})=\sigma(x_i)$,
\item if $t(\bar{x})=c$ then $t^{\mathcal{M},\sigma}(\bar{x})=c^{\mathcal{M}}$,
\item if $t(\bar{x})=f(t_1(\bar{x}),...,t_n(\bar{x}))$ then $t^{\mathcal{M},\sigma}(\bar{x}) = f^{\mathcal{M}}(t_1^{\mathcal{M},\sigma}(\bar{x}),...,t_n^{\mathcal{M},\sigma}(\bar{x}))$.
\end{enumerate}
\end{defn}
The interpretation of $\mathcal{L}$-formulas inside an
$\mathcal{L}$-structure $\mathcal{M}$ is defined as follows.
\begin{defn}
Let $\mathcal{M}$ be an $\mathcal{L}$-structure and $\sigma$ be an
$\mathcal{M}$-assignment.
\begin{enumerate}
\item for every $\bar{r}\in\mathcal{A}$, $\bar{r}^{\mathcal{M},\sigma}=\hat{r}$.
Particularly, $\bar{1}^{\mathcal{M},\sigma}=\hat{1}$ and
$\bar{0}^{\mathcal{M},\sigma}=\hat{0}$.
\item $P^{\mathcal{M},\sigma}(t_1(\bar{x}),...,t_n(\bar{x}))=P^{\mathcal{M}} (t_1^{\mathcal{M},\sigma}(\bar{x}),...,t_n^{\mathcal{M},\sigma}(\bar{x}))$.
\item $(\varphi \wedge\psi)^{\mathcal{M},\sigma}(\bar{x}) = \max\{ \varphi^{\mathcal{M},\sigma}(\bar{x}),\psi^{\mathcal{M},\sigma}(\bar{x})\}$.
\item $(\varphi \to \psi)^{\mathcal{M},\sigma}(\bar{x}) =
\left\{\begin{array}{cc}
0&\varphi^{\mathcal{M},\sigma}(\bar{x})\ge\psi^{\mathcal{M},\sigma}(\bar{x}),\\
\psi^{\mathcal{M},\sigma}(\bar{x})&\varphi^{\mathcal{M},\sigma}(\bar{x})<\psi^{\mathcal{M},\sigma}(\bar{x}).
\end{array}\right.$
\item $(\forall x\ \varphi(x))^{\mathcal{M},\sigma}=\sup\{\varphi^{\mathcal{M},\sigma'}(x):\sigma(x)=\sigma'(x)\}$.
\item $(\exists x\ \varphi(x))^{\mathcal{M},\sigma}=\inf\{\varphi^{\mathcal{M},\sigma'}(x):\sigma(x)=\sigma'(x)\}$.
\end{enumerate}
\end{defn}
Note that 5 and 6 are well-defined, since $\mathbb{I}$ is a complete
ordered set, i.e., any subset of $\mathbb{I}$ has the least upper
bound and the greatest lower bound, respectively.

The following connectives $\neg, \vee, \leftrightarrow, \Rightarrow$
are given in terms of the above connectives.
\begin{center}
$\neg\varphi:=\varphi\to\bar{1}$.\\
$\varphi\vee\psi:=((\varphi\to\psi)\to\psi)\wedge((\psi\to\varphi)\to\varphi)$.\\
$\varphi\leftrightarrow\psi:=(\varphi\to\psi)\wedge(\psi\to\varphi).$\\
$\varphi\Rightarrow\psi:= (\psi\to\varphi)\to\psi$.
\end{center}
Hence, their interpretations can be accordingly computed. In
particular,
\begin{center}
$(\varphi \vee \psi)^{\mathcal{M},\sigma}(\bar{x}) = \min\{
\varphi^{\mathcal{M},\sigma}(\bar{x}),\psi^{\mathcal{M},\sigma}(\bar{x})\}$,\\
$(\varphi \leftrightarrow \psi)^{\mathcal{M},\sigma}(\bar{x}) =
d_{max}(\varphi^{\mathcal{M},\sigma}(\bar{x}),\psi^{\mathcal{M},\sigma}(\bar{x}))$,
\end{center}
where
\begin{center}
$d_{max}(x,y)=\left\{\begin{array}{cc}
\max\{x,y\}&x\ne y,\\
\hat{0}&x=y.
\end{array}\right.$
\end{center}
Observe that if $\psi^{\mathcal{M},\sigma}(\bar{x})>\hat{0}$ then
$(\varphi\Rightarrow\psi)^{\mathcal{M,\sigma}}(\bar{x})=\hat{0}$ iff
$\varphi^{\mathcal{M},\sigma}(\bar{x})>\psi^{\mathcal{M},\sigma}(\bar{x})$.
Therefore,
$(\varphi(\bar{x})\Rightarrow\bar{r})^{\mathcal{M,\sigma}}=\hat{0}$
for $r>0$ iff $\varphi^{\mathcal{M,\sigma}}(\bar{x})>\hat{r}$.

When $\varphi(\bar{x})$ is a quantifier free $\mathcal{L}$-formula,
$\varphi^{\mathcal{M},\sigma}(\bar{x})$ is dependent only on
$\sigma(\bar{x})$. Thus, if $\sigma(\bar{x})=\bar{a}\in M^n$ we may
write $\varphi^{\mathcal{M}}(\bar{a})$ instead of
$\varphi^{\mathcal{M},\sigma}(\bar{x})$. If $\varphi$ is an
$\mathcal{L}$-sentence we write $\varphi^{\mathcal{M}}$ for
$\varphi^{\mathcal{M},\sigma}$.
\begin{defn}
Let $\mathcal{M}$ be an $\mathcal{L}$-structure. For an
$\mathcal{L}$-formula $\varphi(\bar{x})$ and an $\mathcal{L}$-theory
$T$,
\begin{enumerate}
\item $\varphi(\bar{x})$ is \emph{satisfied} by $\bar{a}\in M$ if $\varphi^\mathcal{M}(\bar{a})=\hat{0}$.
In this situation, we write $\mathcal{M}\models\varphi(\bar{a})$. T
is satisfiable in $\mathcal{M}$, if $\mathcal{M}\models\psi$ for
every $\psi\in T$. This is denoted by $\mathcal{M}\models T$.
\item For an $\mathcal{L}$-sentence $\varphi$, we say that $T$ \emph{entails} $\varphi$, $T\models\varphi$, if for
any $\mathcal{L}$-structure $\mathcal{M}\models T$,
$\mathcal{M}\models\varphi$.
\item Likewise, $T$ \emph{strongly entails} $\varphi$, $T\entail\varphi$, if for
any structure $\mathcal{M}$,
$\varphi^\mathcal{M}\le\sup\{\psi^\mathcal{M}:\psi\in T\}$.
\end{enumerate}
\end{defn}
$T$ is called a \emph{satisfiable $\mathcal{L}$-theory} if there is
an $\mathcal{L}$-structure $\mathcal{M}\models T$. $T$ is a
\emph{finitely satisfiable theory} if every finite subset of $T$ is
satisfiable.
%******************************************************************************************************
\subsection{Axioms and the Proof System}\label{section axioms}

This subsection is devoted to introducing a complete proof system
for $RGL^*$. The first part of axioms of $RGL^*$ are the axioms of
propositional \g logic \cite{hajek98}.
\newcounter{Lcount}
\begin{list}{(G\arabic{Lcount})}
{\usecounter{Lcount} \setlength{\rightmargin}{\leftmargin}}
\item $(\varphi \rightarrow \psi)\rightarrow ((\psi\rightarrow \chi)\rightarrow(\varphi\rightarrow
\chi))$.
\item $(\varphi\wedge\psi)\rightarrow \varphi$.
\item $(\varphi\wedge\psi)\rightarrow(\psi\wedge\varphi)$.
\item $\varphi\rightarrow(\varphi\wedge\varphi)$.
\item $(\varphi\rightarrow(\psi\rightarrow\chi))\leftrightarrow((\varphi\wedge\psi)\rightarrow
\chi)$.
\item
$((\varphi\rightarrow\psi)\rightarrow\chi)\rightarrow(((\psi\rightarrow\varphi)\rightarrow\chi)\rightarrow\chi)$.
\item $\bar{1}\rightarrow\varphi$.
\end{list}
The following axioms state the properties of the quantifiers
$\forall$ and $\exists$.
\begin{list}{(G$\forall$\arabic{Lcount})}
{\usecounter{Lcount}}
\item $(\forall x\,\varphi(x))\to\varphi(t)$ ($t$ substitutable for
$ x$ in $\varphi(x)$).
\item $\big(\forall x\,(\psi\to\varphi(x))\big)\to\big(\psi\to(\forall
x\,\varphi(x))\big)$ ($x$ not free in $\psi$).
\item $\big(\forall x\,(\psi\vee\varphi(x))\big)\to\big(\psi\vee(\forall
x\,\varphi(x))\big)$ ($x$ not free in $\psi$).
\end{list}
\begin{list}{(G$\exists$\arabic{Lcount})}
{\usecounter{Lcount}}
\item $\varphi(t)\to(\exists x\,\varphi(x))$ ($t$ substitutable for $ x$ in $\varphi(x)$).
\item $\big(\exists x\,(\psi\to\varphi(x))\big)\to\big(\psi\to(\exists
x\,\varphi(x))\big)$ ($x$ not free in $\psi$).
\end{list}
The inference rules are \emph{modus ponens} and
\emph{generalization}:
\begin{gather*}
\frac{\varphi,~~~~\varphi\to\psi}{\psi},\hspace{5cm}\frac{\varphi}{\forall
x\,\varphi}.
\end{gather*}
The notion of proof is defined as usual. When an
$\mathcal{L}$-theory $T$ \emph{proves} an $\mathcal{L}$-sentence
$\varphi$, we denote it by $T\vdash\varphi$. $T$ is called
\emph{consistent} if $T\nvdash\bar{1}$. Otherwise, it is
\emph{inconsistent}. The deduction theorem is stated as follows.
\begin{thm}
$T\cup\{\varphi\}\vdash\psi$ iff $T\vdash\varphi\to\psi$.
\end{thm}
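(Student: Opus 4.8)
The plan is to prove the two directions separately, with essentially all of the content concentrated in the forward implication.

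For the direction $T\vdash\varphi\to\psi\;\Rightarrow\;T\cup\{\varphi\}\vdash\psi$ I would simply take a proof of $\varphi\to\psi$ from $T$, observe that it remains a proof from the larger theory $T\cup\{\varphi\}$, then append the line $\varphi$ (now available as a hypothesis) and apply modus ponens to the pair $\varphi,\;\varphi\to\psi$ to conclude $\psi$.

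The converse $T\cup\{\varphi\}\vdash\psi\;\Rightarrow\;T\vdash\varphi\to\psi$ I would establish by induction on the length $n$ of a fixed proof $\chi_1,\dots,\chi_n=\psi$ of $\psi$ from $T\cup\{\varphi\}$, showing $T\vdash\varphi\to\chi_k$ for every $k\le n$. Before running the induction I would record two purely propositional facts derivable from (G1)--(G7). First, the weakening schema $\vdash\psi\to(\varphi\to\psi)$: instantiate the import-export axiom (G5) at $((\psi\wedge\varphi)\to\psi)\leftrightarrow(\psi\to(\varphi\to\psi))$, note that its left conjunct is an instance of (G2), and extract the right-hand formula by modus ponens. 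Second, a composition lemma saying that $T\vdash\varphi\to\chi$ together with $T\vdash\varphi\to(\chi\to\psi)$ yields $T\vdash\varphi\to\psi$: turn $\varphi\to(\chi\to\psi)$ into $(\varphi\wedge\chi)\to\psi$ via (G5), derive $\varphi\to(\varphi\wedge\chi)$ from $\varphi\to\chi$ and the identity $\varphi\to\varphi$ by the derived conjunction-introduction rule, and chain the two implications through the transitivity axiom (G1).

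With these lemmas available the induction is routine. If $\chi_k$ is a logical axiom or a member of $T$, then $T\vdash\chi_k$, and weakening gives $T\vdash\varphi\to\chi_k$; if $\chi_k$ is $\varphi$ itself, then $T\vdash\varphi\to\chi_k$ is just the identity $\varphi\to\varphi$. If $\chi_k$ arises by modus ponens from earlier lines $\chi_i$ and $\chi_j=\chi_i\to\chi_k$, the induction hypothesis supplies $T\vdash\varphi\to\chi_i$ and $T\vdash\varphi\to(\chi_i\to\chi_k)$, and the composition lemma delivers $T\vdash\varphi\to\chi_k$. The delicate case is generalization: if $\chi_k=\forall x\,\theta$ comes from an earlier line $\theta$, the induction hypothesis gives $T\vdash\varphi\to\theta$; applying the generalization rule yields $T\vdash\forall x\,(\varphi\to\theta)$, and since $\varphi$ is a sentence the variable $x$ is not free in $\varphi$, so axiom (G$\forall$2) applies and produces $T\vdash\varphi\to\forall x\,\theta$.

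I expect the generalization step to be the main obstacle, precisely because it is the only place where the quantifier axioms and the free-variable bookkeeping enter the argument: the inference depends essentially on $\varphi$ being a sentence, equivalently on $x$ not occurring free in $\varphi$, which is exactly the side condition licensing the use of (G$\forall$2). The two propositional lemmas, though they require several unwindings of the axioms through repeated modus ponens, are otherwise mechanical.
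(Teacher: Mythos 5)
Your argument is correct, but it is worth noting that the paper does not actually prove this theorem: it simply cites H\'ajek's monograph (Theorem 2.2.18 there), so you have supplied the proof that the authors delegate to a reference. What you give is the standard induction on the length of a derivation of $\psi$ from $T\cup\{\varphi\}$, and all the individual steps check out against the axiom list of Section 2.1: the weakening schema $\vdash\psi\to(\varphi\to\psi)$ is exactly Lemma 2.5(i) of the paper and your derivation of it from (G5) and (G2) is fine; the composition lemma is the usual one; the axiom, hypothesis, and modus ponens cases are routine; and the generalization case is handled correctly via (G$\forall$2), using that $\varphi$ is a sentence (theories in this paper consist of sentences, so the side condition is automatic). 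The one place where a reader might want a word more is your ``derived conjunction-introduction rule'' (from $T\vdash\varphi\to\varphi$ and $T\vdash\varphi\to\chi$ infer $T\vdash\varphi\to(\varphi\wedge\chi)$): its derivation is where the contraction axiom (G4), $\varphi\to(\varphi\wedge\varphi)$, enters, and this is precisely the feature of G\"odel logic that makes the deduction theorem hold in its classical form here while it fails in {\L}ukasiewicz logic. Making that dependence explicit would strengthen the write-up, but the proof as proposed is sound.
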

\begin{proof}
See \cite[Theorem 2.2.18]{hajek98}
\end{proof}
An $\mathcal{L}$-theory $T$ is \emph{linear complete} if for every
pair of sentences ($\varphi,\psi)$, either $T\vdash\varphi\to\psi$
or $T\vdash\psi\to\varphi$.
\begin{lem}\label{sub}(Substitution lemma) Let $\Gamma$ be an $\mathcal{L}$-theory and $R,S$ be two nullary
predicate symbols. Moreover, suppose $\Gamma(R,S)\vdash\chi(R,S)$.
Then, for every  $\mathcal{L}$-sentences $\varphi$ and $\psi$,
$\Gamma(\varphi,\psi)\vdash\chi(\varphi,\psi)$.
\end{lem}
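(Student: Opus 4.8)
The plan is to proceed by induction on the length of a derivation of $\chi(R,S)$ from $\Gamma(R,S)$ in $RGL^*$. Let $\rho$ denote the operation of simultaneously replacing, in any formula or set of formulas, each occurrence of the nullary predicate symbol $R$ by the sentence $\varphi$ and each occurrence of $S$ by $\psi$; for $\theta(R,S)$ I write $\theta(\varphi,\psi)$ for $\rho(\theta)$. The structural key, which I would establish first, is that $\rho$ commutes with every connective and quantifier and with term substitution. This holds precisely because $R,S$ are $0$-ary and $\varphi,\psi$ are \emph{sentences}: substituting a closed formula at a nullary-predicate position introduces no free variables, removes none, and cannot place any variable of $\varphi$ or $\psi$ inside the scope of a surrounding quantifier. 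In particular $\rho$ preserves the set of free variables of every formula, and $\rho(\theta(t/x))=(\rho\theta)(t/x)$ for every term $t$.

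For the base cases, a line of the original derivation is either a member of $\Gamma(R,S)$ or a logical axiom. If it belongs to $\Gamma(R,S)$, its $\rho$-image belongs to $\Gamma(\varphi,\psi)$ and is thus a legitimate hypothesis. If it is an instance of one of the propositional schemata (G1)--(G7), which carry no side conditions, then $\rho$ maps it to another instance of the same schema, using only that $\rho$ commutes with the connectives. The cases requiring attention are the quantifier axioms. For (G$\forall$1) and (G$\exists$1) the substituted term is untouched by $\rho$, and the substitutability proviso is inherited because $\rho$ creates no new free occurrences of the bound variable over which the quantifier ranges and no new capturing quantifiers above those occurrences; commutation of $\rho$ with term substitution then exhibits the image as an instance of the same schema. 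For (G$\forall$2), (G$\forall$3) and (G$\exists$2) the proviso has the form ``the bound variable is not free in the side formula'', and this survives because $\rho$ preserves free-variable sets.

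For the inductive step I would treat the two inference rules. If a line was obtained by modus ponens from earlier lines $\theta$ and $\theta\to\eta$, then by the induction hypothesis $\rho(\theta)$ and $\rho(\theta\to\eta)=\rho(\theta)\to\rho(\eta)$ are derivable from $\Gamma(\varphi,\psi)$, so modus ponens yields $\rho(\eta)$. If a line $\forall x\,\theta$ was obtained by generalization from $\theta$, then since $\rho(\forall x\,\theta)=\forall x\,\rho(\theta)$ and $\rho(\theta)$ is derivable by the induction hypothesis, generalization yields the image. Applying this to the last line of the derivation gives $\Gamma(\varphi,\psi)\vdash\chi(\varphi,\psi)$.

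The one place where the argument is not purely mechanical, and which I regard as the main obstacle, is the verification that the side conditions of the quantifier axioms transfer under $\rho$; everything there rests on the hypotheses that $R,S$ are nullary and that $\varphi,\psi$ are sentences. Were $\varphi$ or $\psi$ allowed to carry free variables, $\rho$ could both enlarge free-variable sets and create variable capture, so that both the preservation of axiom instances and the commutation $\rho(\forall x\,\theta)=\forall x\,\rho(\theta)$ could fail. It is exactly this closedness that makes the uniform substitution sound.
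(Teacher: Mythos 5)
Your proposal is correct and is exactly the detailed version of what the paper dismisses as ``Straightforward using the notion of proof'': an induction on the length of the derivation, checking that the substitution maps hypotheses to hypotheses, axiom instances to axiom instances (with the quantifier-axiom provisos preserved because $R,S$ are nullary and $\varphi,\psi$ are closed), and commutes with modus ponens and generalization. No substantive difference from the paper's intended argument; you have merely supplied the details it omits.
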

\begin{proof}
Straightforward using the notion of proof.
\end{proof}
\begin{lem}\label{godelthm}
Suppose that $T$ is an $\mathcal{L}$-theory and
$\varphi,\psi,\chi\in Sent(\mathcal{L})$. Then,
\begin{itemize}
 \item[(i)] $\vdash\varphi\to(\psi\to\varphi)$.
 \item[(ii)] $\vdash\varphi\to(\psi\to(\varphi\wedge\psi))$.
 \item[(iii)] $\vdash(\bar{0}\to\varphi)\to\varphi$.
 \item[(iv)] If $T\vdash\varphi$ and $T\vdash\psi$ then $T\vdash\varphi\wedge\psi$.
 \item[(v)] If $T\vdash\varphi\to\psi$ and $T\vdash\psi\to\chi$ then $T\vdash\varphi\to\chi$.
 \item[(vi)] $\vdash\big((\varphi\to\psi)\to(\psi\to\varphi)\big)\to(\psi\to\varphi)$.
 \item[(vii)] $\vdash\varphi\to(\varphi\vee\psi)$.
 \item[(viii)] $\vdash\big((\varphi\to\psi)\to\chi\big)\to\big((\psi\to\varphi)\vee\chi\big)$.
\end{itemize}
\end{lem}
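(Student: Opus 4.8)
The plan is to derive all eight items from the propositional axioms (G1)--(G7), modus ponens, and the deduction theorem, processing them in an order dictated by their dependencies rather than the order listed. First I would isolate two workhorses. Transitivity (v) is immediate: since (G1) is $(\varphi\to\psi)\to((\psi\to\chi)\to(\varphi\to\chi))$, two applications of modus ponens to the hypotheses $T\vdash\varphi\to\psi$ and $T\vdash\psi\to\chi$ give $T\vdash\varphi\to\chi$. From (v) together with (G4), $\varphi\to(\varphi\wedge\varphi)$, and (G2), $(\varphi\wedge\varphi)\to\varphi$, I obtain the identity law $\vdash\chi\to\chi$ for every $\chi$, which will be used throughout.

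Next the weakening and conjunction facts. Item (i) follows by applying the deduction theorem twice to the trivial derivation $\{\varphi,\psi\}\vdash\varphi$. For (ii) I would instantiate (G5) with $\chi:=\varphi\wedge\psi$, obtaining the axiom instance $(\varphi\to(\psi\to(\varphi\wedge\psi)))\leftrightarrow((\varphi\wedge\psi)\to(\varphi\wedge\psi))$; since the right-hand side is an instance of the identity law, extracting the relevant direction of the biconditional via (G2) and (G3) and applying modus ponens yields $\vdash\varphi\to(\psi\to(\varphi\wedge\psi))$. Crucially, this proof of (ii) does not invoke (iv), so no circularity arises; (iv) then follows from (ii) by two applications of modus ponens to $T\vdash\varphi$ and $T\vdash\psi$. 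Item (vii) is similar: unfolding the definition of $\vee$, it suffices by (iv) to prove $\vdash\varphi\to((\varphi\to\psi)\to\psi)$ and $\vdash\varphi\to((\psi\to\varphi)\to\varphi)$, each of which reduces under the deduction theorem to a one-line derivation (modus ponens, respectively the trivial derivation), after which the two conjuncts are recombined by (iv) and the deduction theorem. Item (iii) is where the special status of the constant $\bar{0}$ enters: using the provability of the truth constant $\bar{0}$, the deduction theorem reduces the goal to $\{\bar{0}\to\varphi\}\vdash\varphi$, which is modus ponens on $\bar{0}$ and $\bar{0}\to\varphi$.

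The prelinearity axiom (G6) drives the last two items, and I expect (viii) to be the main obstacle. For (vi) I instantiate (G6) with $\chi:=\psi\to\varphi$, obtaining $((\varphi\to\psi)\to(\psi\to\varphi))\to(((\psi\to\varphi)\to(\psi\to\varphi))\to(\psi\to\varphi))$; since the inner formula $(\psi\to\varphi)\to(\psi\to\varphi)$ is an instance of the identity law, discharging it by the deduction theorem and modus ponens collapses this to the desired $((\varphi\to\psi)\to(\psi\to\varphi))\to(\psi\to\varphi)$. For (viii) I apply the deduction theorem and aim to establish $\{(\varphi\to\psi)\to\chi\}\vdash(\psi\to\varphi)\vee\chi$, that is, both conjuncts of the definiens. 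The first conjunct, $((\psi\to\varphi)\to\chi)\to\chi$, is precisely what (G6) delivers by modus ponens from the hypothesis $(\varphi\to\psi)\to\chi$. The second conjunct, $(\chi\to(\psi\to\varphi))\to(\psi\to\varphi)$, is the delicate step: applying the deduction theorem once more, I must derive $\psi\to\varphi$ from $(\varphi\to\psi)\to\chi$ and $\chi\to(\psi\to\varphi)$; transitivity (v) chains these into $(\varphi\to\psi)\to(\psi\to\varphi)$, and then (vi) together with modus ponens yields $\psi\to\varphi$. Assembling the two conjuncts via (iv) completes (viii). The only genuine care needed throughout is keeping the proof of (ii) independent of (iv) and choosing the substitution instances of (G5) and (G6) correctly; Lemma~\ref{sub} can be invoked to justify these schematic instantiations cleanly.
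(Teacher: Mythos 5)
Your treatment of (viii) is, in substance, the paper's own proof: the paper likewise unfolds $(\psi\to\varphi)\vee\chi$ into the two conjuncts $\big((\psi\to\varphi)\to\chi\big)\to\chi$ and $\big(\chi\to(\psi\to\varphi)\big)\to(\psi\to\varphi)$, obtains the first by modus ponens from (G6), obtains the second by composing the hypothesis with $\chi\to(\psi\to\varphi)$ (the paper does this through an explicit instance of (G1) where you route through the deduction theorem, a purely cosmetic difference) and then applying (vi) via (v), and finally recombines the conjuncts with (iv). For items (i), (ii) and (iv)--(vii) the paper writes only that they are obvious and cites H\'ajek; your derivations of these are the standard ones and are correct, and you are right to insist that (ii) be extracted from (G5) and the identity law so that (iv) can then be read off from (ii) without circularity. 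Your (vi) via the instantiation $\chi:=\psi\to\varphi$ in (G6) is also fine.

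The one step I would push back on is (iii). Your argument rests on ``the provability of the truth constant $\bar{0}$,'' but $\vdash\bar{0}$ does not follow from the axioms as the paper literally lists them: every axiom among (G1)--(G7), the quantifier axioms and (RGL1)--(RGL3) is an implication or a conjunction/biconditional of implications, and the evaluation sending $\bar{r}$ to $(1+r)/2$ in the standard G\"odel chain $[0,1]$ (with $0$ as the designated value, in the paper's flipped orientation, $\wedge=\max$) designates every axiom and is preserved by the rules, yet gives $\bar{0}$ the undesignated value $1/2$ and falsifies the instance $(\bar{0}\to P)\to P$ for a nullary predicate $P$ valued at $1/4$. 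In H\'ajek's development, which the paper cites for (iii), the truth constant is \emph{by definition} the formula $\bot\to\bot$ and hence provable; here $\bar{0}$ is a primitive connective and no listed axiom yields $\vdash\bar{0}$ or $\vdash\varphi\to\bar{0}$. So your derivation of (iii) is the right one, but it silently uses an axiom that must be added (or the convention $\bar{0}:=\bar{1}\to\bar{1}$ made explicit). To be fair, this gap is inherited from the paper rather than introduced by you: Lemma~\ref{gt}(3), which makes $[\bar{0}]_T$ the least element of the Lindenbaum algebra, and the soundness remark both presuppose the same missing fact.
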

\begin{proof}
(i)-(vii) are obvious, \cite[Chapter 2]{hajek98}. For (viii), by G6,
\begin{gather*}
\vdash\big((\varphi\to\psi)\to\chi\big)\to\Big(\big((\psi\to\varphi)\to\chi\big)\to\chi\Big).
\end{gather*}
So, by modus ponens,
\begin{gather}\label{1}
(\varphi\to\psi)\to\chi\vdash\big((\psi\to\varphi)\to\chi\big)\to\chi.
\end{gather}
On the other hand, by G1,
\begin{gather*}
\vdash\big((\varphi\to\psi)\to\chi\big)\to\Big(\big(\chi\to(\psi\to\varphi)\big)\to\big((\varphi\to\psi)\to(\psi\to\varphi)\big)\Big).
\end{gather*}
Therefore, by modus ponens,
\begin{gather*}
(\varphi\to\psi)\to\chi\vdash\big(\chi\to(\psi\to\varphi)\big)\to\big((\varphi\to\psi)\to(\psi\to\varphi)\big).
\end{gather*}
Now, by (vi) and using (v), we have
\begin{gather}\label{2}
(\varphi\to\psi)\to\chi\vdash\big(\chi\to(\psi\to\varphi)\big)\to(\psi\to\varphi).
\end{gather}
Now, by (\ref{1}), (\ref{2}) and using (iv), we have
\begin{gather*}
(\varphi\to\psi)\to\chi\vdash\Big(\big(\chi\to(\psi\to\varphi)\big)\to(\psi\to\varphi)\Big)\wedge\Big(\big((\psi\to\varphi)\to\chi\big)\to\chi\Big).
\end{gather*}
Thus, by definition of $\vee$,
\begin{gather*}
(\varphi\to\psi)\to\chi\vdash(\psi\to\varphi)\vee\chi.
\end{gather*}
\end{proof}
Other axioms of $RGL^*$ called the book-keeping axioms, that is for
any $\bar{r},\bar{s}\in\mathcal{A}$,
\begin{list}{(RGL\arabic{Lcount})}
{\usecounter{Lcount}}
\item $\bar{r}\wedge\bar{s}\leftrightarrow\overline{\max\{r,s\}}$,
\item $\left\{
\begin{array}{cc}
\bar{r}\to\bar{s}&if~r\ge s,\\
(\bar{r}\to\bar{s})\leftrightarrow\bar{s}&if~r<s,
\end{array}\right.$
\item $\neg\neg\bar{r}$, for all $r<1$.
\end{list}
The following fact is used in the proof of the completeness theorem
and its proof can be found in \cite[Chapter 2]{hajek98}.
\begin{lem}\label{gt} Let $T$ be an $\mathcal{L}$-theory.
\begin{itemize}
\item[1)] Let $\sim$ be the relation on $Sent(\mathcal{L})$ defined by
\begin{center}
$\varphi\sim\psi$ if and only if $T\vdash
\varphi\leftrightarrow\psi$.
\end{center}
Then, $\sim$ is an equivalence relation.
\item[2)] Let $\mathcal{G}=\{[\varphi]_T:\varphi\in Sent(\mathcal{L})\}$ be the set of equivalence classes of $\sim$. Define
$\le_\mathcal{G}$ on $\mathcal{G}$ by
\begin{center}
$[\varphi]_T\le_\mathcal{G}[\psi]_T$ iff $T\vdash\psi\to\varphi$.
\end{center}
Then, $\le_\mathcal{G}$ defines a partially order relation on
$\mathcal{G}$.
\item[3)] For any $\bar{r}\in\mathcal{A}$, suppose $r^\mathcal{G}=[\bar{r}]_T$. Assume $\dotv$, $\dota$ and
$\dotto$ are given by
\begin{center}
$[\varphi]_T\dotv[\psi]_T=[\varphi\vee\psi]_T$,\\
$[\varphi]_T\dota[\psi]_T=[\varphi\wedge\psi]_T$,\\
$[\varphi]_T\dotto[\psi]_T=[\varphi\to\psi]_T$,
\end{center}
respectively. Then, $(\mathcal{G}, \dotv, \dota, \dotto,
[\bar{0}]_T, [\bar{1}]_T)$ is a resituated lattice with the largest
element $[\bar{1}]_T$ and the least element $[\bar{0}]_T$ where
$\dota$ and $\dotto$ form an adjoint pair, i.e., for all
$a,b,c\in\mathcal{G}$,
\begin{center}
$a\dota b\ge_\mathcal{G} c$ iff $a\ge_\mathcal{G} b\dotto c$.
\end{center}
\end{itemize}
\end{lem}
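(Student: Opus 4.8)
The three parts form the standard Lindenbaum--Tarski construction, now carried out for the reversed truth-order convention of $RGL^*$; all the underlying propositional facts are the G\"{o}del-logic facts collected in \cite[Chapter 2]{hajek98} together with Lemma~\ref{godelthm}. For Part~1, reflexivity of $\sim$ follows from $\vdash\varphi\to\varphi$ (hence $\vdash\varphi\leftrightarrow\varphi$), symmetry is immediate because $\varphi\leftrightarrow\psi$ is by definition symmetric in $\varphi,\psi$, and transitivity is obtained by splitting each provable biconditional into its two implications, composing them with Lemma~\ref{godelthm}(v), and re-assembling with Lemma~\ref{godelthm}(iv). For Part~2, I would first check that $\le_{\mathcal G}$ is well defined on classes: if $T\vdash\psi\to\varphi$ and we replace $\varphi,\psi$ by $\sim$-equivalent sentences, composing the relevant implications via Lemma~\ref{godelthm}(v) preserves provability. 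Reflexivity and transitivity of $\le_{\mathcal G}$ are again $\vdash\varphi\to\varphi$ and Lemma~\ref{godelthm}(v); antisymmetry is exactly the observation that $T\vdash\psi\to\varphi$ and $T\vdash\varphi\to\psi$ give $T\vdash\varphi\leftrightarrow\psi$ by Lemma~\ref{godelthm}(iv), so $[\varphi]_T=[\psi]_T$.

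The substance is in Part~3. First the three operations must be shown to descend to $\mathcal G$, i.e.\ that provable equivalence is a congruence for $\wedge,\vee,\to$; this is routine in G\"{o}del logic and can be extracted from \cite[Chapter 2]{hajek98} (alternatively, it follows once the operations are identified with the order-theoretic supremum, infimum and residuum below, which are determined by $\le_{\mathcal G}$ alone). Next I would verify the lattice structure with respect to $\le_{\mathcal G}$. That $[\varphi\wedge\psi]_T$ is the supremum of $[\varphi]_T,[\psi]_T$ reduces to $T\vdash(\varphi\wedge\psi)\to\varphi$ and $T\vdash(\varphi\wedge\psi)\to\psi$ (axioms G2, G3) for the upper-bound part, and to inferring $T\vdash\chi\to(\varphi\wedge\psi)$ from $T\vdash\chi\to\varphi$ and $T\vdash\chi\to\psi$, which follows from the deduction theorem and Lemma~\ref{godelthm}(iv). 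Dually, $[\varphi\vee\psi]_T$ is the infimum: the lower-bound part is Lemma~\ref{godelthm}(vii), while the greatest-lower-bound part, i.e.\ deriving $T\vdash(\varphi\vee\psi)\to\chi$ from $T\vdash\varphi\to\chi$ and $T\vdash\psi\to\chi$, is where prelinearity (G6, through Lemma~\ref{godelthm}(viii)) is needed and is the most delicate point. For the bounds, $[\bar 1]_T$ is largest by G7, since $\vdash\bar 1\to\varphi$; and for $[\bar 0]_T$ to be least I must show $\vdash\varphi\to\bar 0$. I get this by first deriving $\vdash\bar 0$: instantiate Lemma~\ref{godelthm}(iii) at $\varphi:=\bar 0$ to obtain $\vdash(\bar 0\to\bar 0)\to\bar 0$ and apply modus ponens to $\vdash\bar 0\to\bar 0$; then combine $\vdash\bar 0$ with Lemma~\ref{godelthm}(i) and modus ponens to conclude $\vdash\varphi\to\bar 0$.

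Finally, the residuation (adjointness) looks like the real theorem but in fact collapses to a single axiom. Unwinding the definitions, $[\varphi\wedge\psi]_T\ge_{\mathcal G}[\chi]_T$ means $T\vdash(\varphi\wedge\psi)\to\chi$, whereas $[\varphi]_T\ge_{\mathcal G}[\psi\to\chi]_T$ means $T\vdash\varphi\to(\psi\to\chi)$; these two are provably equivalent by the G\"{o}del exportation axiom G5, $\vdash\big(\varphi\to(\psi\to\chi)\big)\leftrightarrow\big((\varphi\wedge\psi)\to\chi\big)$, because a provable biconditional transfers $T$-provability in both directions via modus ponens. This is precisely the stated adjoint condition. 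Thus the only genuinely fiddly steps are the congruence/well-definedness bookkeeping and the prelinearity argument that makes $\vee$ a greatest lower bound; everything else is a direct reading of the axioms and Lemma~\ref{godelthm}, exactly as in the classical G\"{o}del-algebra computation of \cite[Chapter 2]{hajek98} up to the reversal of the order.
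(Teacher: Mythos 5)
Your proposal is correct and follows the standard Lindenbaum--Tarski argument (with the order reversed to match the paper's convention that $\bar 0$ is truth), which is exactly what the paper itself relies on: the paper gives no proof of Lemma~\ref{gt} beyond citing \cite[Chapter 2]{hajek98}. All the individual steps you identify --- G2/G3 and the deduction theorem for $\dota$ as supremum, Lemma~\ref{godelthm}(vii) plus prelinearity for $\dotv$ as infimum, G7 and the derivation of $\vdash\bar 0$ for the bounds, and G5 for the adjunction --- check out against the paper's axioms and Lemma~\ref{godelthm}.
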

In some literatures, $\mathcal{G}$ is called a $G$-algebra and the
structure $(\mathcal{G}, \dotv, \dota, \dotto, \{[\bar{r}]_T :
r\in[0,1]_\mathbb{Q}\})$ is called an $RGL$-algebra.

The following crucial definition is needed for the completeness of
$RGL^*$.
\begin{defn}
An $\mathcal{L}$-theory $T$ is called strongly consistent if
$T\nvdash\bar{r}$ for every $r>0$.
\end{defn}
Clearly, if T is a strongly consistent theory then for each $r>0$,
we have $[\bar{r}]_T>[\bar{0}]_T$. Furthermore, by RGL3, since for
every $r<1$, we have $[\bar{r}]_T<[\bar{1}]_T$. Hence, the function
$r\to[\bar{r}]_T$ defines an order isomorphism of $[0,1]_\mathbb{Q}$
onto $\{r^\mathcal{G}: r\in[0,1]_\mathbb{Q}\}$.
\begin{rem}
Using the usual proof of soundness theorem, one may show that if
$T\vdash\varphi$ then $T\models\varphi$ for every
$\mathcal{L}$-theory $T$ and $\mathcal{L}$-sentence $\varphi$.
\end{rem}
%******************************************************************************************************
\subsection{Completeness of $RGL^*$}\label{section completeness} In
this subsection, it is shown that any strongly consistent theory is
satisfiable. Consequently, the compactness theorem is proved. Our
proof is based on Henkin construction. First, some preliminary
lemmas are given.
\begin{lem}\label{complete set}
Let $T$ be a strongly consistent $\mathcal{L}$-theory in $RGL^*$ and
$\varphi,\psi\in Sent(\mathcal{L})$. Then, either
$T\cup\{\varphi\to\psi\}$ or $T\cup\{\psi\to\varphi\}$ are strongly
consistent.
\end{lem}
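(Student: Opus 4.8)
The plan is to argue by contraposition. Suppose that neither $T\cup\{\varphi\to\psi\}$ nor $T\cup\{\psi\to\varphi\}$ is strongly consistent. By the definition of strong consistency this yields rationals $r,s\in(0,1]$ with $T\cup\{\varphi\to\psi\}\vdash\bar{r}$ and $T\cup\{\psi\to\varphi\}\vdash\bar{s}$. Applying the deduction theorem to each, I obtain $T\vdash(\varphi\to\psi)\to\bar{r}$ and $T\vdash(\psi\to\varphi)\to\bar{s}$. The goal is then to produce a single rational $t>0$ with $T\vdash\bar{t}$, which contradicts the strong consistency of $T$. The correct target turns out to be $t=\min\{r,s\}$ rather than $\max\{r,s\}$: the consequent of a provable implication cannot in general be weakened toward $\bar{1}$ (by RGL2, $\bar{r}\to\bar{t}$ is not provable when $r<t$), so both hypotheses must be used genuinely and merged through a disjunction.

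For the main derivation I would feed the first implication into Lemma \ref{godelthm}(viii). Instantiating that theorem with $\chi:=\bar{r}$ gives
\begin{gather*}
\vdash\big((\varphi\to\psi)\to\bar{r}\big)\to\big((\psi\to\varphi)\vee\bar{r}\big),
\end{gather*}
so modus ponens with $T\vdash(\varphi\to\psi)\to\bar{r}$ yields $T\vdash(\psi\to\varphi)\vee\bar{r}$. Next I combine this with the second hypothesis $T\vdash(\psi\to\varphi)\to\bar{s}$ by means of the monotonicity of $\vee$, i.e. the \g-logic theorem $\vdash(A\to B)\to\big((A\vee C)\to(B\vee C)\big)$ with $A:=\psi\to\varphi$, $B:=\bar{s}$, $C:=\bar{r}$ \cite[Chapter 2]{hajek98}; two applications of modus ponens then give $T\vdash\bar{s}\vee\bar{r}$. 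Finally, the book-keeping axioms RGL1 and RGL2 entail $\vdash(\bar{s}\vee\bar{r})\leftrightarrow\overline{\min\{r,s\}}$, obtained by unfolding the definition of $\vee$ and simplifying each of the resulting implications via RGL2 (as in the computations behind Lemma \ref{gt}). Hence $T\vdash\overline{\min\{r,s\}}$, and since $\min\{r,s\}>0$ this contradicts strong consistency, completing the contrapositive.

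I expect the main obstacle to be the purely syntactic handling of the definable disjunction. The two decisive realizations are that the inconsistencies must be merged at the value $\min\{r,s\}$, and that Lemma \ref{godelthm}(viii) is exactly the device converting an implication-into-$\bar{r}$ into a disjunction with $\bar{r}$; once these are in place, the monotonicity of $\vee$ and the disjunctive book-keeping identity $\bar{s}\vee\bar{r}\leftrightarrow\overline{\min\{r,s\}}$ (both of which require a small case analysis on the order of $r$ and $s$ through RGL2) finish the argument. As a heuristic check, in any $\mathcal{M}\models T$ the two intermediate facts would read $(\psi\to\varphi)^{\mathcal{M}}=\hat{0}$ (from $(\psi\to\varphi)\vee\bar{r}$) and $(\psi\to\varphi)^{\mathcal{M}}\ge\hat{s}>\hat{0}$ (from $(\psi\to\varphi)\to\bar{s}$), which is impossible; the derivation above is the effective, syntactic counterpart of this observation.
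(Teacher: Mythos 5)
Your proof is correct and rests on the same skeleton as the paper's: contraposition, the deduction theorem, and axiom G6 (which is what underlies Lemma \ref{godelthm}(viii)), with the two witnessing constants correctly merged at $\overline{\min\{r,s\}}$. The paper just takes a shorter path through the middle: instead of forming $(\psi\to\varphi)\vee\bar r$ and then $\bar s\vee\bar r$, it first replaces $\bar s$ by $\bar r=\overline{\min\{r,s\}}$ in the second implication via RGL2 and Lemma \ref{godelthm}(v), and then applies G6 with $\chi=\bar r$ to the two implications directly to conclude $T\vdash\bar r$, thereby avoiding both the $\vee$-monotonicity lemma and the evaluation of the disjunction of constants.
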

\begin{proof}
Suppose on the contrary that both $T\cup\{\varphi\to\psi\}$ and
$T\cup\{\psi\to\varphi\}$ are not strongly consistent. So, there are
$r,s>0$ such that $T\cup\{\varphi\to\psi\}\vdash\bar{r}$ and
$T\cup\{\psi\to\varphi\}\vdash\bar{s}$. Thus, by the deduction
theorem, $T\vdash(\varphi\to\psi)\to\bar{r}$ and
$T\vdash(\psi\to\varphi)\to\bar{s}$. Without loss of generality, we
may suppose that $\min\{r,s\}=r$. Therefore, by RGL2,
$\vdash\bar{s}\to\bar{r}$. Now, using Lemma \ref{godelthm}(v), we
have $T\vdash (\varphi\to\psi)\to\bar{r}$ and $T\vdash
(\psi\to\varphi)\to\bar{r}$. Hence, by G6, we have $T\vdash\bar{r}$,
a contradiction.
\end{proof}
Next, the notion of maximally strongly consistent theory is defined.
\begin{defn}
A strongly consistent theory $T$ is called maximally strongly
consistent if it can not be properly included in any strongly
consistent theory, i.e., for any strongly consistent theory
$\Sigma$, if $T\subseteq\Sigma$ then $T=\Sigma$.
\end{defn}
\begin{lem}\label{maxiaml complete set}
Let T be a strongly consistent $\mathcal{L}$-theory. T is maximally
strongly consistent if and only if
\begin{enumerate}
\item for all $\varphi,\psi\in Sent(\mathcal{L})$, either $\varphi\to\psi\in T$ or $\psi\to\varphi\in T$,
\item if $\varphi\in Sent(\mathcal{L})$ and $T\vdash\bar{r}\to\varphi$ for all $r>0$, then $\varphi\in T$.
\end{enumerate}
\end{lem}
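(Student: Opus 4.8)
The plan is to prove the biconditional in two directions, relying throughout on the deduction theorem, Lemma \ref{complete set}, the transitivity of implication (Lemma \ref{godelthm}(v)), and the book-keeping axiom RGL2. For the forward direction, assume $T$ is maximally strongly consistent. To get (1), fix $\varphi,\psi$ and apply Lemma \ref{complete set}: at least one of $T\cup\{\varphi\to\psi\}$ and $T\cup\{\psi\to\varphi\}$ is strongly consistent. Since it contains $T$ and $T$ is maximal among strongly consistent theories, that extension must equal $T$, forcing the corresponding implication into $T$; this yields (1).

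The more delicate part of the forward direction is (2). Suppose $T\vdash\bar{r}\to\varphi$ for every $r>0$; I would show $T\cup\{\varphi\}$ is strongly consistent and then invoke maximality to conclude $\varphi\in T$. Arguing by contradiction, if $T\cup\{\varphi\}$ is not strongly consistent then $T\cup\{\varphi\}\vdash\bar{s}$ for some $s>0$, so the deduction theorem gives $T\vdash\varphi\to\bar{s}$. Choosing a rational $r$ with $0<r<s$ (which exists since $s$ is a positive rational) and combining $T\vdash\bar{r}\to\varphi$ with $T\vdash\varphi\to\bar{s}$ via Lemma \ref{godelthm}(v) yields $T\vdash\bar{r}\to\bar{s}$. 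Because $r<s$, RGL2 gives $\vdash(\bar{r}\to\bar{s})\leftrightarrow\bar{s}$, hence $T\vdash\bar{s}$ with $s>0$, contradicting the strong consistency of $T$.

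For the converse, assume (1) and (2) and let $\Sigma\supseteq T$ be strongly consistent; I would show $\Sigma=T$ by proving every $\varphi\in\Sigma$ lies in $T$. Suppose not, i.e.\ $\varphi\in\Sigma$ but $\varphi\notin T$. By the contrapositive of (2) there is some $r>0$ with $T\nvdash\bar{r}\to\varphi$, so in particular $\bar{r}\to\varphi\notin T$; then (1) forces $\varphi\to\bar{r}\in T\subseteq\Sigma$. Since $\varphi\in\Sigma$, modus ponens inside $\Sigma$ gives $\Sigma\vdash\bar{r}$ with $r>0$, contradicting the strong consistency of $\Sigma$. Hence $\varphi\in T$, so $\Sigma\subseteq T$ and $T=\Sigma$, establishing maximality.

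I expect the main obstacle to be clause (2) of the forward direction: the other steps are immediate consequences of maximality and Lemma \ref{complete set}, but (2) requires producing the auxiliary rational $r<s$ and correctly applying RGL2 to collapse $\bar{r}\to\bar{s}$ to $\bar{s}$, which is exactly where the non-standard valuation set $\mathbb{I}$ and the book-keeping axioms do the essential work.
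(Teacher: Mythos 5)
Your proposal is correct and follows essentially the same route as the paper's own proof: clause (1) via Lemma \ref{complete set} plus maximality, clause (2) by showing $T\cup\{\varphi\}$ is strongly consistent using the deduction theorem, transitivity, and RGL2, and the converse by locating some $r>0$ with $T\nvdash\bar{r}\to\varphi$ and forcing $\varphi\to\bar{r}$ into $\Sigma$. You even make explicit the final RGL2 collapse of $\bar{r}\to\bar{s}$ to $\bar{s}$, which the paper leaves as ``if we take $r<k$ we get a contradiction.''
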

\begin{proof}
(1) is easily derived from Lemma \ref{complete set}. For (2), let
$T\vdash\bar{r}\to\varphi$ for all $r>0$. We show that
$T\cup\{\varphi\}$ is strongly consistent. Suppose on the contrary,
 $T\cup\{\varphi\}$ is not strongly consistent. Then, there is $k>0$
such that $T\cup\{\varphi\}\vdash k$. So, by the deduction theorem,
$T\vdash\varphi\to\bar{k}$. Hence, using Lemma \ref{godelthm}(v),
$T\vdash\bar{r}\to\bar{k}$ for all $r>0$. In particular, if we take
$r<k$ we get a contradiction.\\
Conversely, let (1) and (2) hold and $\Sigma$ be a strongly
consistent theory containing $T$. If $\varphi\notin T$ then it
implies there exists $r>0$ such that $T\nvdash\bar{r}\to\varphi$.
So, $\varphi\to\bar{r}\in T$ and therefore,
$\varphi\to\bar{r}\in\Sigma$. Thus, $\varphi\notin\Sigma$.
\end{proof}
Now, an easy application of Zorn's lemma ensures the existence of a
maximally strongly consistent extension of every strongly consistent
theory. Hence, the following lemma is established.
\begin{lem}
There exists a maximally strongly consistent extension of every
strongly consistent theory.
\end{lem}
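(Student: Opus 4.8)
The plan is to apply Zorn's lemma to the collection of all strongly consistent extensions of $T$, ordered by inclusion. First I would fix a strongly consistent $\mathcal{L}$-theory $T$ and set
\[\mathcal{P}=\{\Sigma\subseteq Sent(\mathcal{L}):T\subseteq\Sigma\text{ and }\Sigma\text{ is strongly consistent}\},\]
partially ordered by $\subseteq$. This poset is nonempty since $T\in\mathcal{P}$, so once I verify the chain condition a maximal element $T^*$ of $\mathcal{P}$ will be exactly a maximally strongly consistent theory extending $T$: indeed any strongly consistent $\Sigma\supseteq T^*$ automatically contains $T$ and hence lies in $\mathcal{P}$, so maximality in $\mathcal{P}$ forces $\Sigma=T^*$, which is precisely the defining property of maximal strong consistency.

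The substantive step is to show that every chain $\mathcal{C}\subseteq\mathcal{P}$ has an upper bound in $\mathcal{P}$. The natural candidate is $\Sigma^*=\bigcup\mathcal{C}$ (with $\Sigma^*=T$ if $\mathcal{C}=\emptyset$). Clearly $T\subseteq\Sigma^*$, so I only need that $\Sigma^*$ is strongly consistent.

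Here is where the single real obstacle lies, namely the passage from the members of the chain to their union. I would argue by contradiction using the finitary character of the proof relation. Suppose $\Sigma^*\vdash\bar{r}$ for some $r>0$. A derivation is a finite object, so it invokes only finitely many premises $\psi_1,\dots,\psi_n\in\Sigma^*$, and each $\psi_i$ belongs to some $\Sigma_i\in\mathcal{C}$. Since $\mathcal{C}$ is linearly ordered by inclusion, one of these, say $\Sigma_k$, contains all the others, whence $\{\psi_1,\dots,\psi_n\}\subseteq\Sigma_k$ and therefore $\Sigma_k\vdash\bar{r}$. This contradicts the strong consistency of $\Sigma_k\in\mathcal{P}$. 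Hence $\Sigma^*$ is strongly consistent, so $\Sigma^*\in\mathcal{P}$ is an upper bound for $\mathcal{C}$.

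With the chain condition established, Zorn's lemma yields a maximal element $T^*\in\mathcal{P}$, and by the observation in the first paragraph $T^*$ is the desired maximally strongly consistent extension of $T$. I do not expect any difficulty beyond the finiteness argument above; everything else is the routine verification that inclusion-maximality in $\mathcal{P}$ coincides with maximal strong consistency.
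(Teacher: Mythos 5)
Your proof is correct and is exactly the ``easy application of Zorn's lemma'' that the paper invokes without writing out: the poset of strongly consistent extensions under inclusion, the union of a chain being strongly consistent by the finitary nature of derivations, and the identification of inclusion-maximality with maximal strong consistency. Nothing to add.
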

\begin{lem}\label{complete set2}
Let T be a maximally strongly consistent theory and
$T\vdash\varphi\vee\psi$. Then, either $T\vdash\varphi$ or
$T\vdash\psi$.
\end{lem}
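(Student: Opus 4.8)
The plan is to unpack the definition of $\vee$ and then exploit the linear completeness of a maximally strongly consistent theory supplied by Lemma \ref{maxiaml complete set}. Recall that $\varphi\vee\psi$ abbreviates $((\varphi\to\psi)\to\psi)\wedge((\psi\to\varphi)\to\varphi)$, so the hypothesis $T\vdash\varphi\vee\psi$ is literally a derivation of a conjunction, and the whole point will be to split that conjunction and then discharge one of its implications.

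First I would extract the two conjuncts. Using the \g axiom G2, namely $(\chi\wedge\theta)\to\chi$, together with modus ponens, $T\vdash\varphi\vee\psi$ yields $T\vdash(\varphi\to\psi)\to\psi$. Applying G3, $(\chi\wedge\theta)\to(\theta\wedge\chi)$, followed by G2 again, it likewise yields $T\vdash(\psi\to\varphi)\to\varphi$. Thus from the single hypothesis I obtain both implications $T\vdash(\varphi\to\psi)\to\psi$ and $T\vdash(\psi\to\varphi)\to\varphi$.

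Next I would invoke Lemma \ref{maxiaml complete set}(1): since $T$ is maximally strongly consistent, either $\varphi\to\psi\in T$ or $\psi\to\varphi\in T$, and I would argue by cases. If $\psi\to\varphi\in T$, then $T\vdash\psi\to\varphi$, and combining this with $T\vdash(\psi\to\varphi)\to\varphi$ by modus ponens gives $T\vdash\varphi$. Symmetrically, if $\varphi\to\psi\in T$, then $T\vdash\varphi\to\psi$, and modus ponens with $T\vdash(\varphi\to\psi)\to\psi$ gives $T\vdash\psi$. In either case one of the disjuncts is provable, which is exactly the claim.

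The argument reduces to a short deduction once the right facts are lined up, so I do not anticipate a serious obstacle; the only point needing care is that this disjunction property is not a general feature of \g logic but is purchased precisely by the linear completeness clause of Lemma \ref{maxiaml complete set}. Recognizing that clause (1), rather than clause (2) or any semantic reasoning, is what drives the case split is the conceptual crux of the proof.
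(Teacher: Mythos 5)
Your proof is correct and fills in exactly the argument the paper leaves implicit (its proof is just ``straightforward''): unpack $\vee$ via G2/G3, then use the linear completeness clause of Lemma \ref{maxiaml complete set} to discharge one of the two implications by modus ponens. Nothing further is needed.
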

\begin{proof}
Proof is straightforward.
\end{proof}
Now, the definition of a Henkin theory is given.
\begin{defn}\label{henkin}
An $\mathcal{L}$-theory $T$ is Henkin if whenever $T\nvdash\forall
x\,\varphi(x)$, there is a constant symbol $c\in\mathcal{L}$ such
that $T\nvdash\varphi(c)$.
\end{defn}
\begin{thm}\label{comp1}
Assume $T$ is a strongly consistent $\mathcal{L}$-theory. Then,
there exist an extension $\hat{\mathcal{L}}$ of $\mathcal{L}$ and a
maximally strongly consistent Henkin $\hat{\mathcal{L}}$-theory
$\hat{T}$ containing $T$.
\end{thm}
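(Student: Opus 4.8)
The plan is to carry out a Henkin-style construction: enlarge the language with witnessing constants, add witnessing axioms while preserving strong consistency, pass to a union, and finally invoke the already-established existence of a maximally strongly consistent extension. The subtle point, which forces a departure from the classical recipe, is the choice of witnessing axiom. The naive axiom $\varphi(c)\to\forall x\,\varphi(x)$ must be avoided: it forces the witness $c$ to attain the supremum $\sup_a\varphi^{\mathcal M}(a)=(\forall x\,\varphi(x))^{\mathcal M}$, which need not exist in $\mathbb I$, and one can check that adding it may turn a strongly consistent theory into one proving some $\bar r$. Instead I would witness only the rational thresholds.

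Concretely, I first build a chain $\mathcal L=\mathcal L_0\subseteq\mathcal L_1\subseteq\cdots$ in which $\mathcal L_{n+1}$ adjoins, for every $\mathcal L_n$-formula $\varphi(x)$ with one free variable and every rational $r>0$, a fresh constant $c_{\varphi,r}$; I set $\hat{\mathcal L}=\bigcup_n\mathcal L_n$, still countable. The witnessing axiom attached to $c_{\varphi,r}$ is
$$H_{\varphi,r}:\quad(\forall x\,\varphi(x)\Rightarrow\bar r)\to(\varphi(c_{\varphi,r})\Rightarrow\bar r).$$
The point of using $\Rightarrow\bar r$ is that, by the observation recorded after the definition of the derived connectives, $(\xi\Rightarrow\bar r)^{\mathcal M}$ takes only the two values $\hat 0$ (when $\xi>\hat r$) and $\hat r$ (when $\xi\le\hat r$). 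Thus $H_{\varphi,r}$ only expresses the harmless implication ``if $\forall x\,\varphi(x)>\hat r$ then $\varphi(c_{\varphi,r})>\hat r$''; and since, for a rational $r$, $\sup_a\varphi^{\mathcal M}(a)>\hat r$ forces some $a$ with $\varphi^{\mathcal M}(a)>\hat r$, this implication can always be met, in contrast with the supremum-attainment demanded by the naive axiom.

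The heart of the argument — and the step I expect to be hardest — is the lemma that, for fresh $c$, adding a single $H_{\varphi,r}$ to a strongly consistent theory $T$ keeps it strongly consistent. I would argue syntactically: if $T\cup\{H_{\varphi,r}\}\vdash\bar q$ for some $q>0$, then by the deduction theorem $T\vdash H_{\varphi,r}\to\bar q$, and generalizing on the fresh constant $c$ gives $T\vdash\forall y\big(H_{\varphi,r}(y)\to\bar q\big)$, where $H_{\varphi,r}(y)=(\forall x\,\varphi\Rightarrow\bar r)\to(\varphi(y)\Rightarrow\bar r)$. Using the provable quantifier shift $\vdash\forall y(\chi(y)\to\bar q)\to\big((\exists y\,\chi(y))\to\bar q\big)$ this yields $T\vdash(\exists y\,H_{\varphi,r}(y))\to\bar q$, so it suffices to prove the ``witness validity'' $\vdash\exists y\,H_{\varphi,r}(y)$ to obtain $T\vdash\bar q$, contradicting strong consistency. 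This is exactly where two-valuedness is used: because $\varphi(y)\Rightarrow\bar r$ is crisp (provably equal to $\bar 0$ or to $\bar r$), the relevant infimum is attained, so the Gödel-logic analogue of the drinker formula $\exists y\big((\exists z(\varphi(z)\Rightarrow\bar r))\to(\varphi(y)\Rightarrow\bar r)\big)$ becomes provable; combined with the $\Rightarrow\bar r$-quantifier shift $\vdash(\forall x\,\varphi\Rightarrow\bar r)\leftrightarrow\exists y(\varphi(y)\Rightarrow\bar r)$ this delivers $\vdash\exists y\,H_{\varphi,r}(y)$. Establishing these two provability facts from the axioms (G$\forall$), (G$\exists$) and the book-keeping axioms is the main technical obstacle.

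Granting the lemma, I add the axioms $H_{\varphi,r}$ one at a time along a fixed enumeration, each with its own fresh constant, obtaining an increasing chain of strongly consistent theories; since a proof uses only finitely many premises, strong consistency is finitary, so the union $T'\supseteq T$ over $\hat{\mathcal L}$ is strongly consistent and contains every $H_{\varphi,r}$. By the lemma guaranteeing a maximally strongly consistent extension, I extend $T'$ to a maximally strongly consistent $\hat{\mathcal L}$-theory $\hat T$. Finally I verify that $\hat T$ is Henkin: if $\hat T\nvdash\forall x\,\varphi(x)$ then, by Lemma \ref{maxiaml complete set}(2), there is $r>0$ with $\hat T\nvdash\bar r\to\forall x\,\varphi(x)$; since $\hat T$ is linear complete its Lindenbaum algebra is a chain, so $[\forall x\,\varphi]_{\hat T}>[\bar r]_{\hat T}$, i.e. $\hat T\vdash\forall x\,\varphi\Rightarrow\bar r$. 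Applying $H_{\varphi,r}\in\hat T$ by modus ponens gives $\hat T\vdash\varphi(c_{\varphi,r})\Rightarrow\bar r$, i.e. $[\varphi(c_{\varphi,r})]_{\hat T}>[\bar r]_{\hat T}>[\bar 0]_{\hat T}$, whence $\hat T\nvdash\varphi(c_{\varphi,r})$. Thus $c_{\varphi,r}$ is the required Henkin witness, and $\hat T$ is the desired maximally strongly consistent Henkin theory.
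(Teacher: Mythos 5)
Your overall architecture (iterated language extensions with rational-threshold witnessing axioms, finitary character of strong consistency, Zorn, and the final Henkin check via the witnessing axiom and linear completeness) matches the paper's, and your closing verification that $\hat T$ is Henkin is sound. But the heart of the argument --- that adding a single $H_{\varphi,r}=(\forall x\,\varphi(x)\Rightarrow\bar r)\to(\varphi(c)\Rightarrow\bar r)$ preserves strong consistency --- is not actually proved. You reduce it to two provability claims, $\vdash(\forall x\,\varphi\Rightarrow\bar r)\leftrightarrow\exists y\,(\varphi(y)\Rightarrow\bar r)$ and the crisp drinker principle $\vdash\exists y\big((\exists z\,(\varphi(z)\Rightarrow\bar r))\to(\varphi(y)\Rightarrow\bar r)\big)$, and you yourself flag their derivation as ``the main technical obstacle'' without supplying it. That obstacle is genuine: the first claim is a quantifier shift of the converse type (pulling $\exists$ out from the consequent position), which is \emph{invalid} in G\"odel logic for general formulas and holds here only because $\varphi(y)\Rightarrow\bar r$ is two-valued; but the system has no crispness or $\Delta$-style axioms, so ``provably equal to $\bar 0$ or to $\bar r$'' and the attainment of the infimum would have to be extracted from G6, the quantifier axioms and the book-keeping axioms by hand (or by an appeal to general completeness of G$\forall$ over chains, which you do not make and which would need to be checked against the order of results in the paper). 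As it stands, the crucial lemma is asserted, not proved.

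The paper sidesteps exactly this difficulty by a different design of the witnessing axiom: it uses two thresholds $r>s>0$ and the disjunction $(\bar r\to\forall x\,\varphi(x))\vee(\varphi(c_{\varphi(x),r,s})\to\bar s)$, and it first replaces $T_n$ by a \emph{maximally} strongly consistent $\hat T_n$ before adjoining the witnesses. Strong consistency of the enlarged theory is then obtained by a purely elementary derivation: assume a minimal finite set of witnessing axioms yields $\bar k$, apply the deduction theorem and Lemma \ref{godelthm}(vii)--(viii) to split the disjunction, use maximality of $\hat T_n$ to convert $\nvdash\bar r\to\forall x\,\varphi(x)$ into $\vdash\forall x\,\varphi(x)\to\bar r$, and then use generalization over the fresh constant together with G$\forall$3, G$\forall$2, Lemma \ref{complete set2} and RGL2 (this is where the slack $r>s$ is consumed) to reach $\vdash\bar k$, a contradiction. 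No drinker-type principle or crispness lemma is needed. If you want to keep your single-threshold $\Rightarrow$-based axiom, you must either supply full syntactic derivations of the two deferred facts or restructure the consistency proof along the paper's lines.
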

\begin{proof}
Let $\mathcal{L}_0=\mathcal{L}$ and $T_0=T$. Suppose that
$\mathcal{L}_n$ and $T_n$ are constructed such that $T_n$ is a
strongly consistent $\mathcal{L}_n$-theory containing $T$. Let
$F_0=Form(\mathcal{L}_0)$. For $n>0$, set
$F_n=Form(\mathcal{L}_n)\setminus Form(\mathcal{L}_{n-1})$. Define
\begin{center}
$\mathcal{L}_{n+1}=\mathcal{L}_n\cup\{c_{\varphi(x),r,s}:\varphi(x)\in
F_n, r>s>0\}$
\end{center}
and take a maximally strongly consistent $\mathcal{L}_n$-theory
$\hat{T}_n$ containing $T_n$. Take
\begin{gather*}
T_{n+1}=\hat{T}_n\cup\{(\bar{r}\to\forall x\,
\varphi(x))\vee(\varphi(c_{\varphi(x),r,s})\to\bar{s}):\varphi(x)\in
F_n, r>s>0\}.
\end{gather*}
\begin{cla}
$T_{n+1}$ is strongly consistent.
\end{cla}
\emph{Proof of claim 1.}\\
Suppose not. Therefore, $T_{n+1}\vdash\bar{k}$ for some $k>0$. So,
there is a finite subset $\Sigma$ of $\{(\bar{r}\to\forall
x\,\varphi(x))\vee(\varphi(c_{\varphi(x),r,s})\to\bar{s}):\varphi(x)\in
F_n, r>s>0\}$ such that $\hat{T}_n\cup\Sigma\vdash\bar{k}$. Take
$\Sigma$ to be minimal. For $\theta_{\varphi(x),r,s}\in\Sigma$, let
$\Gamma=\Sigma\setminus\{\theta_{\varphi(x),r,s}\}$. Now, by showing
that $\hat{T}_n\cup\Gamma\vdash\bar{k}$, we get a contradiction.

Since
$\hat{T}_n\cup\Gamma\cup\{\theta_{\varphi(x),r,s}\}\vdash\bar{k}$,
by the deduction theorem,
$\hat{T}_n\cup\Gamma\vdash\theta_{\varphi(x),r,s}\to\bar{k}$, i.e.,
\begin{gather*}
\hat{T}_n\cup\Gamma\vdash\big((\bar{r}\to\forall
x\,\varphi(x))\vee(\varphi(c_{\varphi(x),r,s})\to\bar{s})\big)\to\bar{k}.
\end{gather*}
Therefore, by Lemma \ref{godelthm}(vii),
\begin{gather}\label{eq1}
\hat{T}_n\cup\Gamma\vdash\big(\bar{r}\to\forall
x\,\varphi(x)\big)\to\bar{k},
\end{gather}
and
\begin{gather}\label{eq2}
\hat{T}_n\cup\Gamma\vdash\big(\varphi(c_{\varphi(x),r,s})\to\bar{s}\big)\to\bar{k}.
\end{gather}
Since
$\vdash\bar{s}\to\big(\varphi(c_{\varphi(x),r,s})\to\bar{s}\big)$,
(\ref{eq2}) implies $\hat{T}_n\cup\Gamma\vdash\bar{s}\to\bar{k}$.
Now, since $r>s$, by RGL2 and Lemma \ref{godelthm}(v),
\begin{gather}\label{eq22}
\hat{T}_n\cup\Gamma\vdash\bar{r}\to\bar{k}.
\end{gather}
As $\hat{T}_n\cup\Gamma\nvdash\bar{k}$, it follows from (\ref{eq1})
that $\hat{T}_n\cup\Gamma\nvdash\bar{r}\to\forall x\,\varphi(x)$.
But, since $\hat{T}_n$ is a maximally strongly consistent
$\mathcal{L}_n$-theory and $(\bar{r}\to\forall x\,\varphi(x))\in
Sent(\mathcal{L}_n)$,
\begin{gather}\label{eq3}
\hat{T}_n\cup\Gamma\vdash\forall x\,\varphi(x)\to\bar{r}.
\end{gather}
On the other hand, by (\ref{eq2}) and Lemma \ref{godelthm}(vii),
\begin{gather*}
\hat{T}_n\cup\Gamma\vdash\bar{k}\vee(\bar{s}\to
\varphi(c_{\varphi(x),r,s})).
\end{gather*}
So, $c_{\varphi(x),r,s}\notin\mathcal{L}_n$ implies that
\begin{gather*}
\hat{T}_n\cup\Gamma\vdash\forall x\,\big(\bar{k}\vee(\bar{s}\to
\varphi(x))\big).
\end{gather*}
Therefore, by G$\forall$3,
\begin{gather*}
\hat{T}_n\cup\Gamma\vdash\bar{k}\vee\forall
x\,(\bar{s}\to\varphi(x)).
\end{gather*}
Now, since $\hat{T}_n\cup\Gamma\nvdash\bar{k}$ and $\hat{T}_n$ is
maximally strongly consistent $\mathcal{L}_n$-theory, by Lemma
\ref{complete set2}, $\hat{T}_n\cup\Gamma\vdash\forall
x\,(\bar{s}\to\varphi(x))$. Moreover, by G$\forall$2,
\begin{gather*}
\hat{T}_n\cup\Gamma\vdash\bar{s}\to\forall x\,\varphi(x).
\end{gather*}
(\ref{eq3}) and Lemma \ref{godelthm}(v) imply that
$\hat{T}_n\cup\Gamma\vdash\bar{s}\to\bar{r}$. Furthermore, as $r>s$,
by RGL2 and (\ref{eq22}),
\begin{gather*}
\hat{T}_n\cup\Gamma\vdash\bar{k},
\end{gather*}
a contradiction.
\begin{flushright}
$\Box_{claim 1}$
\end{flushright}
Now, using the above claim for each $n\ge 0$, one may inductively
define a language $\mathcal{L}_n$ and a strongly consistent
$\mathcal{L}_n$-theory $T_n$. Let
$\hat{\mathcal{L}}=\bigcup_{n<\omega}\mathcal{L}_n$ and $\hat{T}$ be
a maximally strongly consistent $\hat{\mathcal{L}}$-theory
containing $\bigcup_{n<\omega} T_n$. We show that $\hat{T}$ is also
a Henkin $\hat{\mathcal{L}}$-theory.\\
Clearly, as $\bigcup_{n<\omega} T_n$ is strongly consistent,
$\hat{T}$ exists. To show that $\hat{T}$ is Henkin, let
$\hat{T}\nvdash\forall x\,\varphi(x)$ for some $\varphi(x)\in
Form(\hat{\mathcal{L}})$. Now, since $\hat{T}$ is maximally strongly
consistent there exists $r>0$ such that
$\hat{T}\nvdash\bar{r}\to\forall x\,\varphi(x)$. But,
$(\bar{r}\to\forall
x\,\varphi(x))\vee(\varphi(c_{\varphi(x),r,s})\to\bar{s})\in\hat{T}$
implies $\hat{T}\vdash\varphi(c_{\varphi(x),r,s})\to\bar{s}$ and so,
$\hat{T}\nvdash\varphi(c_{\varphi(x),r,s})$.
\end{proof}
The following technical lemma is needed for Theorem \ref{embedding}.
\begin{lem}\label{sub2}
Let $T$ be an $\mathcal{L}$-theory. Set
\begin{itemize}
\item $S=\{\psi\in Sent(\mathcal{L}): \mbox{ there exists rational
numbers }r,s>0\mbox{ such that }
T\vdash(\bar{s}\to\psi)\wedge(\psi\to\bar{r})\}$.
\item $\mathcal{L}'=\mathcal{L}\cup\{\alpha_\psi,\beta_\psi: \psi\in
S\}\cup\{\gamma\}$ where $\alpha_\psi, \beta_\psi$ and $\gamma$ are
some new nullary predicate symbols.
\item $T_\psi=T\cup\{\alpha_\psi\Rightarrow\psi,
\psi\Rightarrow\beta_\psi\}\cup\{\bar{s}\to\alpha_\psi:
T\vdash\bar{s}\Rightarrow\psi\} \cup\{\beta_\psi\to\bar{r}:
T\vdash\psi\Rightarrow\bar{r}\}$.
\item $T_1=T\cup\{\psi\Rightarrow\gamma\}\cup\{\gamma\to\bar{r}: r<1\}$.
\item $T'=T_1\cup\bigcup_{\psi\in S}T_\psi$.
\end{itemize}
If $T$ is strongly consistent then so is $T'$.
\end{lem}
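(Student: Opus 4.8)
The plan is to argue by contradiction, turning a hypothetical derivation of $\bar k$ (for some rational $k>0$) from $T'$ into one from $T$ and so contradicting the strong consistency of $T$. Since proofs are finite, $T'\vdash\bar k$ would already give $T\cup\Sigma\vdash\bar k$ for some finite $\Sigma\subseteq T'\setminus T$. Such a $\Sigma$ mentions only finitely many sentences $\psi\in S$, hence only finitely many of the fresh nullary predicate symbols $\alpha_\psi,\beta_\psi$, together with possibly $\gamma$; crucially none of these symbols occurs in $T$ or in $\bar k$. First I would record, for each fresh symbol, the finitely many members of $\Sigma$ in which it occurs, and then use the Substitution Lemma (Lemma \ref{sub}, applied repeatedly to the finite tuple of fresh symbols) to replace every fresh symbol by a suitable rational constant $\bar t\in\mathcal A$ chosen so that each member of $\Sigma$ becomes provable from $T$.

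The substitutions are forced by the two-sided rational constraints each symbol carries. For a symbol $\alpha_\psi$ the relevant members of $\Sigma$ are $\alpha_\psi\Rightarrow\psi$ and finitely many $\bar s\to\alpha_\psi$; the latter are put into $T_\psi$ only when $T\vdash\bar s\Rightarrow\psi$, so letting $a$ be the least such $s$ (one exists because $\psi\in S$ brackets the value of $\psi$ strictly below some rational, by RGL2) and substituting $\alpha_\psi:=\bar a$ turns $\alpha_\psi\Rightarrow\psi$ into the $T$-provable $\bar a\Rightarrow\psi$ and each $\bar s\to\alpha_\psi$ into $\bar s\to\bar a$, provable by RGL2 since $s\ge a$. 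Symmetrically, for $\beta_\psi$ the constraints are $\psi\Rightarrow\beta_\psi$ and finitely many $\beta_\psi\to\bar r$ (present only when $T\vdash\psi\Rightarrow\bar r$); taking $b$ to be the greatest such $r$ and substituting $\beta_\psi:=\bar b$ makes $\psi\Rightarrow\beta_\psi\mapsto\psi\Rightarrow\bar b$ provable from $T$ and each $\beta_\psi\to\bar r\mapsto\bar b\to\bar r$ provable by RGL2. For $\gamma$ the constraints are $\psi\Rightarrow\gamma$ and finitely many $\gamma\to\bar r$ with $r<1$; here I would pick a rational $t$ with $r<t<1$ dominating all those finitely many parameters and with $T\vdash\psi\Rightarrow\bar t$ --- available from the property that the sentence $\psi$ of $T_1$ lies provably above $\bar r$ for every $r<1$ --- and substitute $\gamma:=\bar t$, so that $\psi\Rightarrow\gamma\mapsto\psi\Rightarrow\bar t$ and each $\gamma\to\bar r\mapsto\bar t\to\bar r$ are provable from $T$.

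After these substitutions, Lemma \ref{sub} yields $T\cup\Sigma'\vdash\bar k$ where every sentence of $\Sigma'$ is now provable from $T$; discharging them one at a time via the deduction theorem and modus ponens (if $T\vdash\theta$ and $T\cup\{\theta\}\vdash\chi$ then $T\vdash\chi$) leaves $T\vdash\bar k$, contradicting strong consistency. I expect the main obstacle to be precisely the bookkeeping in the previous paragraph: collapsing, for each fresh symbol, its finitely many rational constraints into a single admissible rational constant and verifying that the resulting pure-rational instances are genuine theorems of $T$. This is where RGL2 and the strict rational bracketing coming from membership in $S$ (respectively from the special status of $\psi$ in $T_1$) do all the work, and the $\gamma$-clause is the most delicate step, since it demands a single rational $t<1$ that simultaneously lies below $\psi$ and above every $\bar r$ occurring in the finitely many used instances $\gamma\to\bar r$.
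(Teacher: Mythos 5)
Your proof is correct and follows essentially the same route as the paper's: reduce to a finite fragment of the putative derivation of $\bar{k}$, use the Substitution Lemma to replace each fresh nullary symbol by a rational constant (the least used $\bar{s}$ for $\alpha_\psi$, the greatest used $\bar{r}$ for $\beta_\psi$, a suitable $\bar{t}$ with $t<1$ for $\gamma$), check via RGL2 that every substituted hypothesis becomes $T$-provable, and discharge them to get $T\vdash\bar{k}$, contradicting strong consistency. If anything, your write-up is more complete than the paper's, which explicitly verifies only the strong consistency of a single $T_\psi$ (with exactly your choice of substituents $\bar{s}_1$ and $\bar{r}_m$) and leaves the $T_1$ clause, the simultaneous treatment of several $\psi$, and the passage to the full union $T'$ to the reader.
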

\begin{proof}
We only show that $T_\psi$ is strongly consistent. If not, then
there exist a finite subset $\Delta$ of $T$ and $\bar{k}$ such that,
\begin{eqnarray*}
\Delta\cup\{\alpha_\psi\Rightarrow\psi, \psi\Rightarrow\beta_\psi\}
&\cup&\{\bar{s}_i\to\alpha_\psi: T\vdash\bar{s}_i\Rightarrow\psi, 1\le i\le n\}\\
&\cup&\{\beta_\psi\to\bar{r}_j: T\vdash\psi\Rightarrow\bar{r}_j,
1\le j\le m\}\vdash\bar{k}.
\end{eqnarray*}
We may suppose both sequences $\{s_i\}_{i=1}^n$ and
$\{r_j\}_{j=1}^m$ are increasing. So, we have
\begin{center}
$T\vdash(\bar{s}_1\Rightarrow\psi)\wedge(\psi\Rightarrow\bar{r}_m)$
and $r_1\le...\le r_m\bold{<}s_1\le...\le s_n$.
\end{center}
Now, using the substitution Lemma \ref{sub}, one can replace
$\alpha_\psi$ by $\bar{s}_1$ and $\beta_\psi$ by $\bar{r}_m$ and
conclude:
\begin{eqnarray*}
\Delta\cup\{\bar{s}_1\Rightarrow\psi, \psi\Rightarrow\bar{r}_m\}
&\cup&\{\bar{s}_i\to\bar{s}_1: T\vdash\bar{s}_i\Rightarrow\psi, 1\le i\le n\}\\
&\cup&\{\bar{r}_m\to\bar{r}_j: T\vdash\psi\Rightarrow\bar{r}_j, 1\le
j\le m\}\vdash\bar{k}.
\end{eqnarray*}
On the other hand,
\begin{eqnarray*}
T&\vdash&(\bar{s}_1\Rightarrow\psi)\wedge(\psi\Rightarrow\bar{r}_m),\hfill\\
&\vdash&(\bar{s_i}\to\bar{s}_1),~~\mbox{for all } 1\le i\le n,\hfill\\
&\vdash&(\bar{r}_m\to\bar{r_j}),~~\mbox{for all } 1\le j\le m.\\
\end{eqnarray*}
Thus, $T\vdash\bar{k}$, a contradiction.
\end{proof}
\begin{thm}\label{embedding}
Let $\Sigma$ be a maximally strongly consistent $\mathcal{L}$-theory
and $\mathcal{G}$ be the $RGL$-algebra of classes of
$\Sigma$-equivalent sentences introduced in Lemma \ref{gt}. Then,
there is a continuous order preserving map $g$ from $\mathcal{G}$
into $\mathbb{I}$ (that is all suprema and infima preserved by the
map $g$).
\end{thm}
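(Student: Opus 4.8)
The plan is to realise $\mathcal{G}$ concretely inside $\mathbb{I}$ by recording, for each class, first its position relative to the rational scale $\{[\bar r]_\Sigma:r\in[0,1]_{\mathbb{Q}}\}$ (this becomes the first coordinate in $\mathbb{I}$) and then its infinitesimal position within the resulting ``gap'' (the second coordinate). I first collect the structural facts: by Lemma \ref{maxiaml complete set}(1) the order $\le_{\mathcal{G}}$ is linear; $Sent(\mathcal{L})$ is countable, so $\mathcal{G}$ is a countable chain with least element $[\bar 0]_\Sigma$ and greatest element $[\bar 1]_\Sigma$; and $r\mapsto[\bar r]_\Sigma$ is an order isomorphism of $[0,1]_{\mathbb{Q}}$ onto a suborder. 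The target $\mathbb{I}$ is a complete chain, and in it one computes $\sup\{\hat r:r<v\}=(v,0)$ and $\inf\{\hat s:s>v\}=(v,1)$ (for $0<v<1$), so that for every $v>0$ the fibre $\{v\}\times[0,1]$ lies entirely in the gap between the rationals below $v$ and those above $v$; the exception is $v=0$, where $\mathbb{I}$ contains only the single point $\hat 0$. I will require $g([\bar r]_\Sigma)=\hat r$; since Gödel implication is determined by the order, making $g$ a strict order embedding is exactly what forces $g$ to commute with $\to$, so I aim for an order embedding.

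Next I define the first coordinate. For $a\in\mathcal{G}$ put
\[
t(a)=\sup\{r\in[0,1]_{\mathbb{Q}}:[\bar r]_\Sigma\le_{\mathcal{G}}a\}.
\]
Using linearity and the density of the rationals one checks that $t(a)=\inf\{s:a\le_{\mathcal{G}}[\bar s]_\Sigma\}$, that $t$ is monotone with $t([\bar r]_\Sigma)=r$, and, crucially, that $t$ preserves all existing suprema and infima. Indeed, if $a=\sup_{\mathcal{G}}S$ then $t(a)\ge\sup_{b\in S}t(b)$ by monotonicity, and for any rational $r<t(a)$ there is $r'>r$ with $[\bar{r'}]_\Sigma\le_{\mathcal{G}}a$, so $[\bar r]_\Sigma<a$ is not an upper bound of $S$, producing $b\in S$ with $[\bar r]_\Sigma<b$ and hence $t(b)\ge r$; thus $\sup_{b}t(b)\ge t(a)$. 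The infimum case is dual. The value $v=0$ is special: if $a\le_{\mathcal{G}}[\bar s]_\Sigma$ for every $s>0$, then Lemma \ref{maxiaml complete set}(2) forces $a=[\bar 0]_\Sigma$, so $t^{-1}(0)=\{[\bar 0]_\Sigma\}$ and no class must be placed in the region above $\hat 0$ that $\mathbb{I}$ omits.

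Now the second coordinate. For each value $v$ let $C_v=t^{-1}(v)$, an interval of $\mathcal{G}$. I embed $C_v$ into the fibre $\{v\}\times[0,1]$: for irrational $v$ into all of $[0,1]$, and for rational $v$ sending $[\bar v]_\Sigma\mapsto v$, the part of $C_v$ below $[\bar v]_\Sigma$ into $[0,v)$ and the part above into $(v,1]$ (for $v=1$ there is no ``above'' part, and its ``below'' part uses $[0,1)$, which is where a class such as $\gamma$ lands). Each $C_v$ is a countable chain, and I invoke the standard fact that a countable chain admits a strictly increasing, suprema- and infima-preserving map into a real interval: pass to its Dedekind completion and send the (countably many) jumps to jumps and the dense parts to dense parts by a non-atomic assignment, the atom-free part being what keeps the map simultaneously injective and order continuous. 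Setting $g(a)=(t(a),\theta(a))$, with $\theta$ the chosen fibre coordinate, yields the candidate map. Here Lemma \ref{sub2} is used to guarantee that the relevant cuts of the rational scale — the suprema and infima of the rationals below, respectively above, a given class — are realised by elements of $\mathcal{G}$, which is what lets me pin the fibre maps to the ambient values $(v,0)$, $(v,v)$ and $(v,1)$.

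Finally I verify the two required properties. Order preservation is a short case analysis: two classes with distinct $t$-values are separated by the first coordinate, and classes in a common fibre by $\theta$. For preservation of suprema (infima being dual), let $a=\sup_{\mathcal{G}}S$ and $v=t(a)=\sup_{b}t(b)$; if the $t$-values of a cofinal part of $S$ lie strictly below $v$, then the lexicographic supremum of $g(S)$ is $(v,0)$ and one shows $a=\min C_v$ with $\theta(a)=0$, whereas if a cofinal part of $S$ already lies in $C_v$ the order continuity of the fibre map gives $\theta(a)=\sup\theta(S)$; the order continuity of $t$ glues the two cases. I expect the genuine difficulty to be precisely this last step: arranging the fibre embeddings to be strictly monotone \emph{and} order continuous at once (impossible with point masses at two-sided limit classes, whence the non-atomic construction) and making their endpoint behaviour agree with the ambient suprema $(v,0)$ and infima $(v,1)$ across the boundary class $[\bar v]_\Sigma$. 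This boundary matching, underwritten by Lemma \ref{sub2} together with the exclusion of the region above $\hat 0$ coming from Lemma \ref{maxiaml complete set}(2), is exactly where the design of $\mathbb{I}$ is essential.
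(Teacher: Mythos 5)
Your proposal is correct and follows essentially the same route as the paper: your $t$ is the paper's standard-part map $st$, your fibres $C_v$ are its classes $\langle a\rangle_{\mathcal{G}}$, the fibre embeddings pinned at $\hat r$ and at the endpoints $(v,0)$, $(v,1)$ via Lemma \ref{sub2} are its maps $g_a$ (built from the same countable-chain embedding fact, H\'ajek's Lemma 5.3.1), and your final case split on whether the supremum is approached inside or across a fibre matches the paper's cases (a), (b1), (b2). Your explicit treatment of $t^{-1}(0)=\{[\bar 0]_\Sigma\}$ via Lemma \ref{maxiaml complete set}(2) is a detail the paper leaves implicit, but it is the same argument.
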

\begin{proof}
For any $a\in\mathcal{G}$, set
\begin{center}
$st(a)=\inf\{r\in[0,1]_\mathbb{Q}:a\le_\mathcal{G}[\bar{r}]_\Sigma\}\in[0,1]$.
\end{center}
Let $\langle a\rangle_\mathcal{G}=\{b\in\mathcal{G}:st(b)=st(a)\}$.
Note that if $b\in\langle a\rangle_\mathcal{G}$ then, for any $r>0$,
\begin{center}
$\Sigma\vdash\bar{r}\to a$ iff $\Sigma\vdash\bar{r}\to b$.
\end{center}

To construct $g$, for any $a\in\mathcal{G}$, we define $g_a$ from
$\langle a\rangle_\mathcal{G}$ to
\begin{center}
$\mathbb{I}_a=\{x\in\mathbb{I}: (st(a),0)\le x\le (st(a),1)\}$
\end{center}
and let $g=\bigcup_{a\in\mathcal{G}}g_a$.\\
Lemma \ref{sub2} shows that one may extend the language
$\mathcal{L}$ so that for any $[\bar{0}]_\Sigma<_\mathcal{G}a$,
$\langle a\rangle_\mathcal{G}$ has at least two distinct elements,
and furthermore, for any rational number $0<r<1$, there are two
distinct elements $a,b\in\langle
[\bar{r}]_\Sigma\rangle_\mathcal{G}$ such that
$a<_\mathcal{G}[\bar{r}]_\Sigma<_\mathcal{G}b$. Thus, since $\langle
a\rangle_\mathcal{G}$ is a countable set by the same way as
\cite[Lemma 5.3.1]{hajek98}, for any
$[\bar{0}]_\Sigma<_\mathcal{G}a$, there is a continuous order
preserving map $g_a$ of $\langle a\rangle_\mathcal{G}$ into
$\mathbb{I}_a$ such that
\begin{itemize}
\item[1)] If $\langle a\rangle_\mathcal{G}$ has minimum (resp. maximum) $\alpha$, then $g_a(\alpha)$
is the minimum (resp. maximum) of $\mathbb{I}_a$,
\item[2)] for each rational number $r\in[0,1]$, $g_r([\bar{r}]_\Sigma)=\hat{r}$.
\end{itemize}
In the light of the above Lemma \ref{sub2}, (1) and (2)
simultaneously hold.

The proof is completed, if we show that
$g=\bigcup_{a\in\mathcal{G}}g_a$
is a continuous order preserving function.\\
Linear completeness of $\Sigma$ implies that whenever $\langle
a\rangle_\mathcal{G}<\langle b\rangle_\mathcal{G}$, there is $r>0$
such that $a<_\mathcal{G}[\bar{r}]_\Sigma<_\mathcal{G}b$ and whence,
$g$ is an order preserving function. To prove the continuity of $g$,
let $A\subseteq\mathcal{G}$ and $\sup A=\alpha$ and $\inf A=\beta$.
We have to verify that $\sup g(A)=g(\alpha)$ and $\inf
g(A)=g(\beta)$, respectively. We only show that $\sup
g(A)=g(\alpha)$ and the other case is similar. The proof is divided
in two different cases:
\begin{itemize}
\item[a)] $\alpha\in\langle a\rangle_\mathcal{G}$ for some $a\in\mathcal{G}$, and furthermore, there is some $e\ne\alpha$ in $A$
such that $e\in\langle a\rangle_\mathcal{G}$. In this case,
continuity of $g_a$ implies that
\begin{eqnarray*}
\sup g(A)&=&\sup\{g(b): b\in A\}=\sup\{ g(b): b\in A\cap\langle
a\rangle_\mathcal{G}\}\\
&=&\sup\{ g_a(b): b\in A\cap\langle
a\rangle_\mathcal{G}\}=g_a\Big(\sup\{b: b\in A\cap\langle
a\rangle_\mathcal{G}\}\Big)=g_a(\alpha).
\end{eqnarray*}
\item[b)] $\alpha\in\langle a\rangle_\mathcal{G}$ and there is no element of $A$ in $\langle
a\rangle_\mathcal{G}$. Subsequently, we have two subcases:
\begin{itemize}
\item[b1)] $g(\alpha), \sup g(A)\in\mathbb{I}_a$. Since $b\notin\langle a\rangle_\mathcal{G}$ for each $b\in A$, it follows
$\alpha=\min\langle a\rangle_\mathcal{G}$. Thus,
$g(\alpha)=g_a(\alpha)=\min(\mathbb{I}_a)$ by the condition (1)
above. On the other hand, as $\sup g(A)\in\mathbb{I}_a$, it implies
$\sup g(A)=\min(\mathbb{I}_a)$. Therefore, $\sup g(A)=g(\alpha)$.
\item[b2)] $g(\alpha)\in\mathbb{I}_a$, but $\sup
g(A)\notin\mathbb{I}_a$. This means $\sup g(A)<g(\alpha)$. So, there
is a rational number $r<st(a)$ such that $\sup
g(A)<\hat{r}<g(\alpha)$. Therefore, $\sup g(A)<g([\bar{r}]_\Sigma)$.
But, this contradicts the fact that $\alpha$ is the least upper
bound of $A$.
\end{itemize}
\end{itemize}
\end{proof}
Next, the completeness theorem is established for $RGL^*$.
\begin{thm}\label{completeness of rgl}(Completeness of $RGL^*$)
In $RGL^*$, any strongly consistent theory is satisfiable.
\end{thm}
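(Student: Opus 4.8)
The plan is to run a Henkin construction and then read a model off the order-embedding of Theorem \ref{embedding}. Given a strongly consistent $T$, Theorem \ref{comp1} yields an extension $\hat{\mathcal L}\supseteq\mathcal L$ and a maximally strongly consistent Henkin $\hat{\mathcal L}$-theory $\hat T\supseteq T$; since $T\subseteq\hat T$ it suffices to satisfy $\hat T$. Let $\mathcal G$ be the $RGL$-algebra of $\hat T$-classes from Lemma \ref{gt} and let $g\colon\mathcal G\to\mathbb{I}$ be the continuous order-preserving map of Theorem \ref{embedding} (an order-embedding, by its construction), normalized by $g([\bar r]_{\hat T})=\hat r$. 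I would define the canonical structure $\mathcal M$ to have as universe the set $M$ of closed $\hat{\mathcal L}$-terms (nonempty, since the construction adds witnessing constants), interpret function and constant symbols syntactically, and put $P^{\mathcal M}(t_1,\dots,t_n)=g([P(t_1,\dots,t_n)]_{\hat T})$ for every predicate symbol $P$ and closed terms $t_1,\dots,t_n$. Each closed term then denotes itself, and the whole theorem rests on the Truth Lemma $\varphi^{\mathcal M}=g([\varphi]_{\hat T})$ for every $\hat{\mathcal L}$-sentence $\varphi$, proved by induction on $\varphi$.

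The propositional cases are routine once the algebraic structure is used. By Lemma \ref{gt}, $\mathcal G$ is a residuated lattice, and it is a chain because $\hat T$ is linearly complete; on this chain $[\varphi\wedge\psi]$ is the $\le_{\mathcal G}$-join of $[\varphi]$ and $[\psi]$, $[\varphi\to\psi]$ is their residuum, and $[\bar r]$ sits at the rational level $r$. The target $\mathbb{I}$ carries exactly the corresponding residuated-chain operations ($\max$, the \g implication, the constants $\hat r$). Since $g$ is an order-embedding preserving all suprema and infima, it is a homomorphism for these operations, so the clauses for $\bar r$, $\wedge$ and $\to$ follow directly, and the derived connectives $\vee,\neg,\leftrightarrow$ are then automatic.

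The quantifier step is the crux, and I expect it to be the main obstacle. The semantic clause gives $(\forall x\,\varphi(x))^{\mathcal M}=\sup_t\varphi(t)^{\mathcal M}$ over closed terms $t$, which by the induction hypothesis is $\sup_t g([\varphi(t)])$; using continuity of $g$ this reduces the case to the algebraic identity $[\forall x\,\varphi(x)]=\sup_t[\varphi(t)]$ in $\mathcal G$ (and dually $[\exists x\,\varphi(x)]=\inf_t[\varphi(t)]$ for the existential clause, via $\varphi(t)\to\exists x\,\varphi(x)$ and preservation of infima). That $[\forall x\,\varphi]$ is an upper bound is immediate from the axiom $(\forall x\,\varphi(x))\to\varphi(t)$. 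For the least-upper-bound direction I would re-use the witnessing disjunctions $(\bar r\to\forall x\,\varphi)\vee(\varphi(c_{\varphi(x),r,s})\to\bar s)\in\hat T$ from the construction: given a candidate upper bound $c<_{\mathcal G}[\forall x\,\varphi]$ with $st(c)<st([\forall x\,\varphi])$, I pick rationals $r>s$ with $st(c)<s<r<st([\forall x\,\varphi])$, note $\hat T\nvdash\bar r\to\forall x\,\varphi$, and apply Lemma \ref{complete set2} to the disjunction to obtain $\hat T\vdash\varphi(c_{\varphi(x),r,s})\to\bar s$, whence $[\varphi(c_{\varphi(x),r,s})]\ge[\bar s]>_{\mathcal G}c$, contradicting that $c$ bounds all $[\varphi(t)]$. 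This settles the standard part (first coordinate) of the value; the genuinely delicate point is matching the full $\mathbb{I}$-value, i.e. the second, infinitesimal coordinate, in the remaining case where $c$ lies at the same level as $[\forall x\,\varphi]$. This is exactly where the continuity of $g$ and the abundance of elements in each level furnished by Lemma \ref{sub2} are needed, and I expect verifying this within-level least-upper-bound to be the step demanding the most care.

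Granting the Truth Lemma, satisfiability follows at once. For $\psi\in T\subseteq\hat T$ we have $\hat T\vdash\psi$, hence $\hat T\vdash\bar r\to\psi$ for every $r>0$ by Lemma \ref{godelthm}(i) and modus ponens; therefore $st([\psi]_{\hat T})=0$, and since $g$ sends every level-$0$ class to $\hat 0$ (its image lies at level $0$, which in $\mathbb{I}$ is the single point $\hat 0$), we get $\psi^{\mathcal M}=g([\psi]_{\hat T})=\hat 0$. Thus $\mathcal M\models\psi$ for every $\psi\in T$, that is $\mathcal M\models T$, so $T$ is satisfiable.
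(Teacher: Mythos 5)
Your proposal follows the paper's proof essentially verbatim: Henkin extension via Theorem \ref{comp1}, the term model with predicates interpreted through the continuous order-preserving map $g$ of Theorem \ref{embedding}, the Truth Lemma $\varphi^{\mathcal M}=g([\varphi]_{\hat T})$ by induction, and the conclusion for $\psi\in T$. The only difference is that you spell out the induction (which the paper dismisses as ``straightforward'') and correctly identify the quantifier step, handled via the Henkin witnessing disjunctions and continuity of $g$, as the delicate point.
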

\begin{proof}
Let $T$ be a strongly consistent $\mathcal{L}$-theory. By Theorem
\ref{comp1}, there is a first order language $\hat{\mathcal{L}}$
containing $\mathcal{L}$ and a maximally strongly consistent Henkin
$\hat{\mathcal{L}}$-theory $\Sigma$ containing $T$. Let
$\bar{\mathcal{G}}$ be the $RGL$-algebra of classes of
$\Sigma$-equivalent sentences introduced in Lemma \ref{gt}. Since
$\Sigma$ is linear complete, $\mathcal{G}$ is a linear ordered set.
Now, in the light of Theorem \ref{embedding}, there is a continuous
order preserving map from $\mathcal{G}$ into $\mathbb{I}$.
Therefore, one can define an $\mathcal{L}$-structure $\mathcal{M}$
as follows.
\begin{itemize}
\item[a)] The universe of $\mathcal{M}$ is the set of all closed $\hat{\mathcal{L}}$-terms.
\item[b)] For each n-ary function symbol $f$, define $f^\mathcal{M}:M^n\to M$
as
\begin{center}
$f^\mathcal{M}(t_1, ..., t_n)=f(t_1, ..., t_n)$.
\end{center}
\item[c)] For each n-ary predicate symbol $P$, define $P^\mathcal{M}:M^n\to\mathbb{I}$
as
\begin{center}
$P^\mathcal{M}(t_1, ..., t_n)=g([P(t_1, ..., t_n)]_\Sigma)$.
\end{center}
\end{itemize}
A straightforward induction on the complexity of formulas shows that
$\varphi^\mathcal{M}=g([\varphi]_\Sigma)$ for any $\varphi\in
Sent(\hat{\mathcal{L}})$. In particular, for any $\varphi\in\Sigma$,
$\varphi^\mathcal{M}=\hat{0}$. Since $T\subseteq\Sigma$, it follows
that $\mathcal{M}$ is a model of $T$ and the proof is complete.
\end{proof}
\begin{cor}\label{compactness th}(Compactness theorem for $RGL^*$)
Every finitely satisfiable $\mathcal{L}$-theory is satisfiable.
\end{cor}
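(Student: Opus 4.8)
The plan is to derive the compactness theorem (Corollary~\ref{compactness th}) from the completeness theorem (Theorem~\ref{completeness of rgl}) via the usual route: the completeness theorem states that strong consistency implies satisfiability, so it suffices to connect finite satisfiability to strong consistency. The key observation is that strong consistency is a syntactic notion defined in terms of finite proofs, whereas finite satisfiability is a semantic notion about finite subtheories; the soundness remark (the Remark preceding Section~2.2) bridges these.

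First I would argue that if $T$ is finitely satisfiable, then $T$ is strongly consistent. Suppose toward a contradiction that $T\nvdash\bar{r}$ fails for some $r>0$, that is $T\vdash\bar{r}$ with $r>0$. Since proofs are finite, there is a finite subset $T_0\subseteq T$ with $T_0\vdash\bar{r}$. By the soundness remark, $T_0\models\bar{r}$, meaning that in every structure $\mathcal{M}\models T_0$ we have $\mathcal{M}\models\bar{r}$, i.e.\ $\bar{r}^{\mathcal{M}}=\hat{0}$. But by the interpretation of nullary connectives, $\bar{r}^{\mathcal{M}}=\hat{r}\ne\hat{0}$ whenever $r>0$, so no structure can satisfy $\bar{r}$. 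Hence $T_0$ has no model, contradicting finite satisfiability of $T$. Therefore $T$ is strongly consistent.

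Next, having established that $T$ is strongly consistent, I would simply invoke the completeness theorem (Theorem~\ref{completeness of rgl}): every strongly consistent $\mathcal{L}$-theory is satisfiable. This immediately yields a model $\mathcal{M}\models T$, completing the proof that $T$ is satisfiable.

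The conceptual content is entirely carried by the completeness theorem and the unusual convention that $\bar{r}$ for $r>0$ is unsatisfiable; there is no genuine obstacle here beyond verifying that the finite proof witnessing $T\vdash\bar{r}$ uses only finitely many sentences of $T$, which is immediate from the definition of proof. The one point worth stating carefully is why a single unsatisfiable $\bar{r}$ (rather than the classical $\bar{0}=\bot$) produces the contradiction: this is precisely the inverted semantics of $RGL^*$, where strong consistency (no derivation of any positive rational constant) is the correct analogue of classical consistency, and finite satisfiability rules out any such derivation from a finite fragment.
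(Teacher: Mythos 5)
Your argument is correct and is exactly the route the paper intends: the corollary is stated without proof immediately after Theorem~\ref{completeness of rgl}, and the standard derivation is precisely yours — finiteness of proofs plus the soundness remark show a finitely satisfiable theory cannot prove any $\bar{r}$ with $r>0$ (since $\bar{r}^{\mathcal{M}}=\hat{r}\ne\hat{0}$ makes $\bar{r}$ unsatisfiable), so the theory is strongly consistent and the completeness theorem applies.
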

Also, one may study the strong completeness with respect to the
strong entailment.
%*******************************************************************
\subsection{Completeness with respect to the strong entailment}
In this subsection, we investigate the relationship between strong
entailment and deduction. First, we prove the weak completeness of
$RGL^*$. A different but similar proof can be found in \cite[Theorem
4.7]{esteva2009}.
\begin{lem}\label{complete}(Weak completeness theorem)
For every sentence $\varphi$, if $\models\varphi$ then
$\vdash\varphi$.
\end{lem}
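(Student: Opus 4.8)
The plan is to prove the contrapositive: assuming $\nvdash\varphi$, I will exhibit an $\mathcal{L}$-structure $\mathcal{M}$ with $\varphi^{\mathcal{M}}\neq\hat{0}$, so that $\not\models\varphi$. The whole argument reduces to a single point: finding a rational $r>0$ for which the one-axiom theory $\{\varphi\Rightarrow\bar{r}\}$ is strongly consistent. Once such an $r$ is in hand, Theorem \ref{comp1} extends $\{\varphi\Rightarrow\bar{r}\}$ to a maximally strongly consistent Henkin theory $\Sigma$ over a larger language, and the term model $\mathcal{M}$ produced in the proof of Theorem \ref{completeness of rgl} satisfies $\mathcal{M}\models\Sigma$, in particular $\mathcal{M}\models\varphi\Rightarrow\bar{r}$. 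By the observation recorded just after the definition of $\Rightarrow$, the equality $(\varphi\Rightarrow\bar{r})^{\mathcal{M}}=\hat{0}$ forces $\varphi^{\mathcal{M}}>\hat{r}>\hat{0}$; hence $\mathcal{M}\not\models\varphi$, as required.

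The heart of the matter is thus the purely proof-theoretic claim that $\nvdash\varphi$ yields a rational $r>0$ with $\{\varphi\Rightarrow\bar{r}\}$ strongly consistent. (The empty theory is itself strongly consistent, since $\vdash\bar{r}$ would give $\models\bar{r}$ by soundness, which fails for $r>0$.) I would argue by contradiction. Suppose that for every rational $r>0$ the theory $\{\varphi\Rightarrow\bar{r}\}$ is not strongly consistent, so $\{\varphi\Rightarrow\bar{r}\}\vdash\bar{k}$ for some $k>0$. By the deduction theorem $\vdash(\varphi\Rightarrow\bar{r})\to\bar{k}$; unfolding $\varphi\Rightarrow\bar{r}=(\bar{r}\to\varphi)\to\bar{r}$ and applying Lemma \ref{godelthm}(viii) followed by modus ponens gives $\vdash\big(\bar{r}\to(\bar{r}\to\varphi)\big)\vee\bar{k}$, whereupon G5 together with the idempotence of $\bar{r}$ under $\wedge$ (RGL1) collapses the left disjunct to $\bar{r}\to\varphi$, yielding $\vdash(\bar{r}\to\varphi)\vee\bar{k}$ for each $r>0$.

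The main obstacle is to pass from this family of theorems back to $\vdash\varphi$, contradicting $\nvdash\varphi$, and this step is genuinely delicate. The naive route—reading $\vdash(\bar{r}\to\varphi)\vee\bar{k}$ semantically as $\models\bar{r}\to\varphi$ for all $r$, hence $\models\varphi$—is circular, since inferring $\vdash\varphi$ from $\models\varphi$ is exactly the present lemma; moreover no finite subfamily of these theorems can entail $\varphi$, so the elimination of the book-keeping disjunct $\bar{k}$ must be carried out proof-theoretically, using prelinearity (G6), the comparison axioms RGL2 between truth constants, and the fact that $\mathbb{I}$ excludes the infinitesimal pairs $(0,s)$ with $s>0$. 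An equivalent and perhaps cleaner presentation bypasses the contradiction format and argues directly through the chain representation of $RGL$-algebras, i.e. the propositional completeness of rational \g logic (cf. \cite[Theorem~4.7]{esteva2009}): the Lindenbaum algebra of the empty theory maps onto a linearly ordered quotient separating $[\varphi]$ from $[\bar{0}]$, which is realized—after Henkinization (Theorem \ref{comp1}) and the order embedding into $\mathbb{I}$ (Theorem \ref{embedding})—by a maximally strongly consistent Henkin $\Sigma$ with $\Sigma\nvdash\varphi$. In that case Lemma \ref{maxiaml complete set} supplies a rational $r>0$ with $\Sigma\vdash\varphi\to\bar{r}$, so that $\varphi^{\mathcal{M}}=g([\varphi]_\Sigma)\ge\hat{r}>\hat{0}$. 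In either presentation the real work is the same: ensuring that $\varphi$ can be kept bounded strictly away from $\hat{0}$ throughout the maximalization.
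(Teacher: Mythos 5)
Your reduction is sound as far as it goes: if some $\{\varphi\Rightarrow\bar{r}\}$ with rational $r>0$ is strongly consistent, then Theorems \ref{comp1} and \ref{completeness of rgl} do yield a model with $\varphi^{\mathcal{M}}>\hat{r}>\hat{0}$, hence $\not\models\varphi$. The genuine gap is that the claim you reduce everything to --- $\nvdash\varphi$ implies $\{\varphi\Rightarrow\bar{r}\}$ is strongly consistent for some rational $r>0$ --- is never actually proved, and it is equivalent to the lemma itself modulo results already in the paper (the lemma gives it back via soundness, since any value of $\mathbb{I}$ above $\hat{0}$ exceeds some $\hat{r}$), so the reduction makes no progress. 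Your first route stalls exactly where you admit it does: from the infinite family $\vdash(\bar{r}\to\varphi)\vee\bar{k}_r$ there is no syntactic way to discard the disjunct $\bar{k}_r$, since for $k<1$ the constant $\bar{k}$ is not refutable (RGL3 only gives $\neg\neg\bar{k}$, and in G\"{o}del logic $A\vee B$ with $\neg\neg B$ does not yield $A$); the appeal to the fact that $\mathbb{I}$ omits the pairs $(0,s)$ with $s>0$ is a property of the intended value set and cannot be invoked inside a derivation. Your second route asserts that Henkinization produces a maximally strongly consistent Henkin $\Sigma$ with $\Sigma\nvdash\varphi$, but Theorem \ref{comp1} maximalizes with respect to strong consistency only, and a maximally strongly consistent extension of the empty theory will in general contain $\varphi$. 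Redoing the construction so as to preserve ``strongly consistent and $\nvdash\varphi$'' runs into the same obstruction: in the linearization step the mixed case ($T\cup\{\psi\to\chi\}\vdash\varphi$ while $T\cup\{\chi\to\psi\}\vdash\bar{k}$) only yields $T\vdash\varphi\vee\bar{k}$, i.e.\ precisely the disjunction you could not eliminate.

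For comparison, the paper closes this crux by an entirely different device: since $\nvdash\varphi$ in $RGL^*$, the finitely many constants $\bar{r}_1,\dots,\bar{r}_n$ occurring in $\varphi$ are treated as nullary predicates constrained by the book-keeping theory $Ax(r_1,\dots,r_n)$, so that $Ax(r_1,\dots,r_n)\nvdash\varphi$ in standard first-order G\"{o}del logic; the known standard completeness theorem for that logic then supplies a countable $[0,1]$-valued structure with $\varphi^{\mathcal{M}}>0$, which is pushed into $\mathbb{I}$ by a continuous order-preserving map chosen so that everything evaluating to $0$ lands at or below a threshold $\hat{\alpha}$ while $\varphi$ stays strictly above it. To repair your argument you would either have to import such an external completeness result at the point where you need the strongly consistent theory $\{\varphi\Rightarrow\bar{r}\}$, or carry out the modified Henkin construction in full; as written, the central step is identified but not established.
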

\begin{proof}
Let $\nvdash\varphi$ in $RGL^*$ and $\bar{r}_1,\dots,\bar{r}_n\in
Sub(\varphi)$. Let $G(r_1, ..., r_n)$ be the \g logic enriched with
finitely many nullary predicate $\bar{r}_1, ..., \bar{r}_n$
introduced in \cite{hajek98}. Note that any sentence which is
derivable in $G(r_1, ..., r_n)$ can also be derived in $RGL^*$. The
set $Ax(r_1, ..., r_n)$ of book-keeping axioms of $G(r_1, ..., r_n)$
can also be viewed as an $\mathcal{L}\cup\{\bar{r}_1, ...,
\bar{r}_n\}$-theory in first-order \g logic. This means that
$\bar{r}_1, ..., \bar{r}_n$ are treated as nullary predicates.
Hence, $Ax(r_1, ..., r_n)\nvdash\varphi$ in (standard) first-order
\g logic. Now, on the basis of standard completeness theorem of \g
logic, there is a countable structure $\mathcal{M}$, whose truth
values take place in $[0,1]$ such that $\varphi^\mathcal{M}>0$. Note
that for $1\le i\le n$, $\bar{r}_i$ may not be interpreted by $r_i$.
Let
\begin{center}
$X=\{\psi^{\mathcal{M}}(\bar{a})\mid\ \psi(\bar{x})\in Sub(\varphi),
\bar{a}\in M^n\}.$
\end{center}
Take $\alpha=\max\{r>0\mid\ \bar{r}^{\mathcal{M}}=0, \bar{r}\in
Sub(\varphi)\}\cup\{\bar{0}\}$. Since $X$ is countable there is a
continuous order-preserving map $g:X \rightarrow[\alpha,1]^2$ such
that $g(\bar{r})=\hat{r}$, for every $r>\alpha$ and $g(\bar{0}
^{\mathcal{M}})=\hat{\alpha}$. We construct an
$\mathcal{L}$-structure $\mathcal{N}$ as follows. Let the universe
of $\mathcal{N}$ as well as the interpretations of function and
constant symbols remain the same as the structure $\mathcal{M}$. For
every predicate symbol $P$ and $\bar{a}\in M$, define
\[ P^{\mathcal{N}}(\bar{a}) = \left\lbrace
\begin{array}{ccl}
g(P^{\mathcal{M}}(\bar{a})) & P(\bar{x})\in Sub(\varphi),\\
\hat{0} & \hbox{o.w} .
\end{array}
\right. \] We claim that for every $ \psi(\bar{x})\in Sub(\varphi)$
and $\bar{a}\in M^n$,
\begin{enumerate}
\item
if $\psi^{\mathcal{M}}(\bar{a})>0$ then
$\psi^{\mathcal{N}}(\bar{a})=g(\psi^{\mathcal{M}}
(\bar{a}))>\hat{\alpha}$, and
\item
if $\psi^{\mathcal{M}}(\bar{a})=0$ then
$\psi^{\mathcal{N}}(\bar{a})\leq\hat{\alpha}$.
\end{enumerate}
The claim can be proved by induction.
\begin{enumerate}
\item
For atomic formulas, the connectives $\wedge$, $\vee$, and the
quantifier $\forall$, the induction is routine.
\item Let $\psi(\bar{x})=\psi_1(\bar{x})\to\psi_2(\bar{x})$.\\
If $(\psi_1\rightarrow\psi_2)^{\mathcal{M}}(\bar{a})>0$ then
$\psi_1^{\mathcal{M}}(\bar{a})<\psi_2^{\mathcal{M}}(\bar{a})$ and
$\psi_2^{\mathcal{M}}(\bar{a})>0$.
\begin{itemize}
\item
If $\psi_1^{\mathcal{M}}(\bar{a})=0$ then by induction hypotheses,
$\psi_1^{\mathcal{N}}(\bar{a})\leq \hat{\alpha}$ and
$\psi_2^{\mathcal{N}}(\bar{a})=g(\psi_2^{\mathcal{M}}(\bar{a}))>\hat{\alpha}$.
Therefore,
$\psi_1^{\mathcal{N}}(\bar{a})<\psi_2^{\mathcal{N}}(\bar{a})$ and
$(\psi_1\rightarrow\psi_2)^{\mathcal{N}}(\bar{a})=\psi_2^{\mathcal{N}}(\bar{a})>\hat{\alpha}$.
\item
If $0<\psi_1^{\mathcal{M}}(\bar{a})<\psi_2^{\mathcal{M}}(\bar{a})$
then
$\hat{\alpha}<\psi_1^{\mathcal{N}}(\bar{a})=g(\psi_1^{\mathcal{M}}(\bar{a}))<\psi_2^{\mathcal{N}}(\bar{a})=g(\psi_2^{\mathcal{M}}(\bar{a}))$.
So,
$(\psi_1\rightarrow\psi_2)^{\mathcal{N}}(\bar{a})=\psi_2^{\mathcal{N}}(\bar{a})>\hat{\alpha}$.
\end{itemize}
If $(\psi_1\rightarrow\psi_2)^{\mathcal{M}}(\bar{a})=0$ then
$\psi_1^{\mathcal{M}}(\bar{a})\geq \psi_2^{\mathcal{M}}(\bar{a})$.
Suppose on the contrary,
$(\psi_1\rightarrow\psi_2)^{\mathcal{N}}(\bar{a})>\hat{\alpha}$.
Then, $\psi_1^{\mathcal{N}}(\bar{a})< \psi_2^{\mathcal{N}}(\bar{a})$
and $\psi_2^{\mathcal{N}}(\bar{a})>\hat{\alpha}$. This means
$\psi_2^{\mathcal{N}}(\bar{a})=g(\psi_2^{\mathcal{M}}(\bar{a}))>g(0)=\hat{\alpha}$.
Now, since $g$ is monotone $\psi_2^{\mathcal{M}}(\bar{a})>0$. On the
other hand,
$\psi_1^\mathcal{M}(\bar{a})\ge\psi_2^\mathcal{M}(\bar{a})>0$
implies that
$\hat{\alpha}<\psi_2^{\mathcal{N}}(\bar{a})=g(\psi_2^{\mathcal{M}}(\bar{a}))\leq
\psi_1^{\mathcal{N}}(\bar{a})=g(\psi_1^{\mathcal{M}}(\bar{a}))$, a
contradiction.
\item Let $\psi(\bar{x})=\exists y\ \psi_1(y,\bar{x})$.\\
If $(\exists y\ \psi_1(y,\bar{b}))^{\mathcal{M}}>0$ then $\inf_{a\in
M}\psi_1^{\mathcal{M}}(a,\bar{b})>0$. This implies that
$\psi_1^{\mathcal{M}}(a,\bar{b})>0$ for every $a\in M$. So, by
induction hypothesis,
$\psi_1^{\mathcal{N}}(a,\bar{b})=g(\psi_1^{\mathcal{M}}(a,\bar{b}))>\hat{\alpha}$
for every $a\in M$. By the continuity of $g$, we have
\begin{eqnarray*}
% \nonumber to remove numbering (before each equation)
(\exists y\ \psi_1(y,\bar{b}))^{\mathcal{N}}&=&\inf_{a\in N}\psi_1^{\mathcal{N}}(a,\bar{b})\\
&=&\inf_{a\in M}g(\psi_1^{\mathcal{M}}(a,\bar{b}))\\
&=&g(\inf_{a\in M}\psi_1^{\mathcal{M}}(a,\bar{b}))>\hat{\alpha}.
\end{eqnarray*}
Now, suppose $(\exists y\ \psi_1(y,\bar{b}))^{\mathcal{M}}=0$. Then,
$\inf_{a\in M}\psi_1^{\mathcal{M}}(a,\bar{b})=0$.
\begin{itemize}
\item
If there is an element $a\in M$ such that
$\psi_1^{\mathcal{M}}(a,\bar{b})=0$ then by induction hypothesis,
$\psi_1^{\mathcal{N}}(a,\bar{b})\leq\hat{\alpha}$. So, $(\exists y\
\psi_1(y,\bar{b}))^{\mathcal{N}}=\inf_{a\in
N}\psi_1^{\mathcal{N}}(a,\bar{b})\leq\hat{\alpha}$.
\item
If for every $a\in M$, $\psi_1^{\mathcal{M}}(a,\bar{b})>0$ then for
every $a\in N$,
$\psi_1^{\mathcal{N}}(a,\bar{b})=g(\psi_1^{\mathcal{M}}(a,\bar{b}))>\hat{\alpha}$.
Hence,
\begin{eqnarray*}
% \nonumber to remove numbering (before each equation)
(\exists y\ \psi_1(x,\bar{b}))^{\mathcal{N}}&=&\inf_{a\in N}\psi_1^{\mathcal{N}}(a,\bar{b})\\
&=&\inf_{a\in M}g (\psi_1^{\mathcal{M}}(a,\bar{b}))\\
&=& g(\inf_{a\in
M}\psi_1^{\mathcal{M}}(a,\bar{b}))=g(0)=\hat{\alpha}.
\end{eqnarray*}
\end{itemize}
\end{enumerate}
\end{proof}
The following theorem establishes some connections between the
strong entailment and deduction.
\begin{thm}\label{entail}
Let $\Sigma$ be an $\mathcal{L}$-theory and $\varphi$ be an
$\mathcal{L}$-sentence.
\begin{itemize}
\item[1) ] if $\Sigma$ is finite then,
$\Sigma\entail\varphi$ iff $\Sigma\vdash\varphi$.
\item[2) ] $\Sigma\entail\varphi$ if and only if
$\Sigma\vdash\overline{n^{-1}}\to\varphi$ for all $n\in\mathbb{N}$.
\end{itemize}
\end{thm}
\begin{proof}
(1). Let $\Sigma=\{\varphi_1,\dots,\varphi_n\}$. If
$\{\varphi_1,\dots,\varphi_n\}\entail \varphi$ then for all
$\mathcal{L}$-structures $\mathcal{M}$,
\begin{center}
$\max\{\varphi_1^{\mathcal{M}},\dots,
\varphi_n^{\mathcal{M}}\}\geq\varphi^{\mathcal{M}}$.
\end{center}
So, for all $\mathcal{L}$-structures $\mathcal{M}$,
$(\varphi_1\wedge\dots \wedge\varphi_n)^{\mathcal{M}}\geq
\varphi^{\mathcal{M}}$ and $(\varphi_1\wedge\dots
\wedge\varphi_n\rightarrow\varphi)^{\mathcal{M}}=0$. Therefore,
$\models \varphi_1\wedge \dots\wedge\varphi_n\rightarrow\varphi$. By
Lemma \ref{complete}, $\vdash
\varphi_1\wedge\dots\wedge\varphi_n\rightarrow\varphi$. So, by the
deduction theorem,
$\varphi_1\wedge\dots\wedge\varphi_n\vdash\varphi$. Hence,
$\Sigma\vdash\varphi$.\\
The other direction easily follows from the soundness theorem.\\
(2). We first show that whenever $\Sigma\entail\varphi$, for every
$n\in\mathbb{N}$, there is a finite subset $\Sigma_0$ of $\Sigma$
such that
$\Sigma_0\entail\overline{n^{-1}}\to\varphi$.\\
Let $\Sigma=\{\varphi_i: i\in I\}$ and $n\in\mathbb{N}$. Since
$\Sigma\entail\varphi$, it follows that for every
$\mathcal{L}$-structure $\mathcal{M}$, we have $\sup_{i\in
I}\varphi_i^{\mathcal{M}}\geq\varphi^{\mathcal{M}}$. Extend the
language $\mathcal{L}$ to $\mathcal{L}'=\mathcal{L}\cup \{\gamma\}$
where $\gamma$ is a new nullary predicate symbol. Put
\begin{center}
$\Sigma'=\{\gamma\Rightarrow\overline{n^{-1}},
\varphi\Rightarrow\gamma\}\cup\{\gamma\rightarrow\varphi_i:i\in
I\}$.
\end{center}
We claim that $\Sigma'$ is not satisfiable. Otherwise, let
$\mathcal{M}$ be an $\mathcal{L}'$-structure which models $\Sigma'$.
So, $\gamma^\mathcal{M}>\widehat{n^{-1}}$,
$\varphi^\mathcal{M}>\gamma^\mathcal{M}$ and for every $i\in I$,
$\gamma^\mathcal{M}\geq\varphi_i^\mathcal{M}$. Then,
$\gamma^\mathcal{M}\ge \sup_{i\in
I}\varphi_i^\mathcal{M}\ge\varphi^\mathcal{M}$. But, this is a
contradiction, since $\varphi^\mathcal{M}>\gamma^\mathcal{M}$.
Therefore, by the compactness theorem, there is a finite subset
$\Sigma_0'$ of $\Sigma$ such that the following theory
\begin{center}
$\Sigma_0'=\{\gamma\Rightarrow\overline{n^{-1}},
\varphi\Rightarrow\gamma\}\cup\{\gamma\rightarrow\theta:
\theta\in\Sigma_0\}$
\end{center}
is not satisfiable. We claim that
$\Sigma_0\entail\overline{n^{-1}}\to\varphi$.\\
Fix an $\mathcal{L}$-structure $\mathcal{M}$. If
$\varphi^\mathcal{M}\le\widehat{n^{-1}}$ then
$(\overline{n^{-1}}\to\varphi)^\mathcal{M}=\hat{0}$ and therefore,
$\max_{\theta\in\Sigma_0}\theta^\mathcal{M}\ge\hat{0}=(\overline{n^{-1}}\to\varphi)^\mathcal{M}$.
So, we may suppose that $\varphi^\mathcal{M}>\widehat{n^{-1}}$. We
show that
$\max_{\theta\in\Sigma_0}\theta^\mathcal{M}\ge\varphi^\mathcal{M}$.
If not, then
$\varphi^\mathcal{M}>\max_{\theta\in\Sigma_0}\theta^\mathcal{M}$.
Choose $\alpha\in\mathbb{I}$ in such a way that
\begin{center}
$\varphi^\mathcal{M}>\alpha>\max_{\theta\in\Sigma_0}\theta^\mathcal{M}$,\\
$\alpha>\widehat{n^{-1}}$.
\end{center}
Then, by interpreting the nullary predicate $\gamma$ by $\alpha$, we
may get an $\mathcal{L}'$ expansion $\mathcal{N}$ of $\mathcal{M}$
which is a model of $\Sigma'$, a contradiction.\\
Whence, as $\Sigma_0$ is finite, by (1) we have
$\Sigma_0\vdash\overline{n^{-1}}\to\varphi$. Hence, for any
$n\in\mathbb{N}$, $\Sigma\vdash\overline{n^{-1}}\to\varphi$.
\end{proof}
In the following theorem an approximate version of
entailment compactness is established.
\begin{thm}\label{entailment completeness}
Let $T$ be an $\mathcal{L}$-theory and $\varphi$ be an $\mathcal{L}$-sentence.
$T\models\varphi$ if and only if for every $n\in\mathbb{N}$, there
is a finite subset $T_n$ of $T$ such that
$T_n\models\overline{n^{-1}}\to\varphi$.
\end{thm}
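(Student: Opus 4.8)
The statement is a biconditional, and I would treat the two directions separately; the forward direction is where the real content lies, while the converse is a direct semantic argument using no proof theory.

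For the direction from right to left, assume that for every $n$ there is a finite $T_n\subseteq T$ with $T_n\models\overline{n^{-1}}\to\varphi$, and let $\mathcal{M}$ be any model of $T$. Since $T_n\subseteq T$ we have $\mathcal{M}\models T_n$, hence $(\overline{n^{-1}}\to\varphi)^\mathcal{M}=\hat0$, which by the clause defining $\to$ means exactly $\varphi^\mathcal{M}\le\widehat{n^{-1}}$. As this holds for all $n$, I would conclude $\varphi^\mathcal{M}=\hat0$, i.e.\ $\mathcal{M}\models\varphi$, whence $T\models\varphi$. The point to verify here is that the infimum of $\{\widehat{n^{-1}}:n\in\mathbb{N}\}$ in $\mathbb{I}$ is $\hat0$: any $(a,b)\le(1/n,1/n)$ for all $n$ must have $a=0$, and the only such point of $\mathbb{I}$ is $\hat0=(0,0)$, precisely because the points $(0,r)$ with $r>0$ were deleted from $[0,1]^2$. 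This is exactly where the nonstandard truth set $\mathbb{I}$ earns its keep.

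For the forward direction, the plan is to reduce to the already-established compactness theorem (Corollary \ref{compactness th}) via the derived connective $\Rightarrow$. Fix $n$ and consider the $\mathcal{L}$-theory $T\cup\{\varphi\Rightarrow\overline{n^{-1}}\}$; note no language expansion is needed since $\varphi$ and $\overline{n^{-1}}$ already lie in $\mathcal{L}$. Using the observation recorded just after the definition of $\Rightarrow$, namely that $(\varphi\Rightarrow\bar r)^\mathcal{M}=\hat0$ iff $\varphi^\mathcal{M}>\hat r$ whenever $r>0$, a model of this theory would be a model of $T$ in which $\varphi^\mathcal{M}>\widehat{n^{-1}}>\hat0$. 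Since $T\models\varphi$ forces $\varphi^\mathcal{M}=\hat0$ in every model of $T$, no such model exists, so $T\cup\{\varphi\Rightarrow\overline{n^{-1}}\}$ is unsatisfiable. By Corollary \ref{compactness th} some finite subset is unsatisfiable, and since enlarging an unsatisfiable theory preserves unsatisfiability I may take it of the form $T_n\cup\{\varphi\Rightarrow\overline{n^{-1}}\}$ with $T_n\subseteq T$ finite. Unsatisfiability of this finite theory says that every $\mathcal{M}\models T_n$ has $(\varphi\Rightarrow\overline{n^{-1}})^\mathcal{M}\ne\hat0$, i.e.\ $\varphi^\mathcal{M}\le\widehat{n^{-1}}$, which is the same as $(\overline{n^{-1}}\to\varphi)^\mathcal{M}=\hat0$. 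Hence $T_n\models\overline{n^{-1}}\to\varphi$, as required.

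The main obstacle is conceptual rather than computational: one must find the right way to convert the quantitative condition $\varphi^\mathcal{M}\le\widehat{n^{-1}}$ into a finitary \emph{satisfiability} question to which compactness applies. The connective $\Rightarrow$ is exactly the device that internalizes the strict inequality $\varphi^\mathcal{M}>\widehat{n^{-1}}$ as a satisfiable atomic condition, mirroring the role it played in the proof of Theorem \ref{entail}; once this encoding is in place the argument is a straightforward contrapositive application of the compactness theorem. The only boundary point I would check carefully is that the hypothesis $r>0$ (here $r=n^{-1}$) guarantees $\widehat{n^{-1}}>\hat0$, so that the cited observation about $\Rightarrow$ genuinely applies, together with the harmless adjunction of $\varphi\Rightarrow\overline{n^{-1}}$ to the finite unsatisfiable subset.
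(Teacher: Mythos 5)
Your proof is correct and takes essentially the same route as the paper: both directions reduce to the compactness theorem applied to $T$ augmented with a single sentence pinning $\varphi^{\mathcal{M}}$ above $\widehat{n^{-1}}$, plus the observation that $\inf_n \widehat{n^{-1}}=\hat{0}$ in $\mathbb{I}$. The only cosmetic differences are that you encode the bound with $\varphi\Rightarrow\overline{n^{-1}}$ (strict inequality) where the paper uses $\varphi\to\overline{n^{-1}}$ (non-strict), and you run the forward direction directly (unsatisfiable theory has an unsatisfiable finite subset) rather than by contradiction via finite satisfiability.
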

\begin{proof}
Let $T\models\varphi$. Suppose on the contrary that, there exists
$n\in\mathbb{N}$ such that for any finite subset $S$ of $T$,
$S\nvDash\overline{n^{-1}}\to\varphi$. Thus, for any finite subset
$S$ of $T$ there is an $\mathcal{L}$-structure $\mathcal{M}$ of $S$
such that $\mathcal{M}\nvDash\overline{n^{-1}}\to\varphi$. So,
$\mathcal{M}\models\varphi\to\overline{n^{-1}}$, i.e.,
$S\cup\{\varphi\to\overline{n^{-1}}\}$ is satisfiable. But, $S$ is
an arbitrary finite subset of $T$ and so,
$T\cup\{\varphi\to\overline{n^{-1}}\}$ is finitely satisfiable. Now,
the compactness theorem implies that
$T\cup\{\varphi\to\overline{n^{-1}}\}$ is satisfiable.
But, this contradicts $T\models\varphi$.\\
Conversely, suppose that for any $n\in\mathbb{N}$ there is a finite
subset $T_n$ of $T$ such that
$T_n\models\overline{n^{-1}}\to\varphi$. We want to show that
$T\models\varphi$. Suppose not. Hence, there exists an
$\mathcal{L}$-structure $\mathcal{M}$ of $T$ such that
$\mathcal{M}\nvDash\varphi$. So, there is a natural number $n$ such
that $\varphi^\mathcal{M}\ge\widehat{n^{-1}}$. This means for any
finite subset $S$ of $T$, $S\nvDash\overline{n^{-1}}\to\varphi$, a
contradiction.
\end{proof}
\begin{rem}\label{translation}
Let $u:[0,1]^2\to[0,1]^2$ defined by $u(x,y)=(1-x,1-y)$ and
$\mathbb{I}^*=[0,1]^2\setminus\{(1,r):r<1\}$. One may naturally use the function
$u$ to define an $\mathbb{I}^*$-interpretation $\mathcal{M}_u$ for the language $\mathcal{L}$. Using this
function one may translate all semantical issues of $RLG^*$, e.g, satisfiability and
(strong) entailment to the fuzzy first-order rational \g logic. Hence all the results
given in this section remain valid for the fuzzy first-order rational \g logic.
\end{rem}

%******************************************************************************************************
\section{Ultrametric Logic}\label{section ultrametric logic}
In first-order classical logic the equality relation has the
following properties,
\begin{list}{(S\arabic{Lcount})}
{\usecounter{Lcount} \setlength{\rightmargin}{\leftmargin}}
\item $\forall x\, (x=x)$,
\item $\forall x\forall y\, (x=y\to y=x)$,
\item $\forall x\forall y\forall z\, \left((x=y\wedge y=z)\to x=z\right)$,
\end{list}
\begin{list}{(E\arabic{Lcount})}
{\usecounter{Lcount} \setlength{\rightmargin}{\leftmargin}}
\item for all function symbol $f$, $\forall\bar{x}\forall\bar{y}\,\big((\bar{x}=\bar{y})\to(f(\bar{x})=f(\bar{y}))\big)$,
\item for all predicate symbol $P$, $\forall\bar{x}\forall\bar{y}\,\big((\bar{x}=\bar{y})\to(P(\bar{x})\leftrightarrow P(\bar{y}))\big)$.
\end{list}
While (S1-S3) are called the \emph{similarity axioms}, (E1,E2) are
named the \emph{extensionality axioms}. The next lemma shows the
meaning of the above axioms in $RGL^*$.
\begin{lem}\label{metric}
Let $\mathcal{M}$ be an $\mathcal{L}$-structure and suppose "$d$" is
a distinguished binary predicate symbol in $\mathcal{L}$.
\begin{itemize}
\item[1)] If $d^{\mathcal{M}}$ satisfies the similarity axioms then $d^\mathcal{M}$ defines an
extended pseudo-ultrametric on the universe of $\mathcal{M}$, i.e.,
$d^\mathcal{M}:M^2\to\mathbb{I}$ satisfies the following properties,
for all $a,b,c,\in M$:
\begin{center}
$d^\mathcal{M}(a,a)=\hat{0}$,\\
$d^\mathcal{M}(a,b)=d^\mathcal{M}(b,a)$,\\
$d^\mathcal{M}(a,b)\le\max\{d^\mathcal{M}(a,c),d^\mathcal{M}(b,c)\}$.
\end{center}
\item[2)] Moreover, if $\mathcal{M}$ satisfies the extensionality axioms then the interpretation of any
function symbol $f$ and any predicate symbol $P$ is 1-Lipschitz,
respectively, i.e., for all $\bar{a},\bar{b}\in M^n$,
\begin{center}
\item[] $d^\mathcal{M}(f^\mathcal{M}(\bar{a}),f^\mathcal{M}(\bar{b}))\le
d^\mathcal{M}(\bar{a},\bar{b})$,\\
\item[] $d_{max}(P^\mathcal{M}(\bar{a}),P^\mathcal{M}(\bar{b}))\le
d^\mathcal{M}(\bar{a},\bar{b})$.
\end{center}
\end{itemize}
\end{lem}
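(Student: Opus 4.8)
The plan is to unfold the interpretation clauses and the definition of satisfaction for each axiom, using three facts recorded earlier: a sentence $\chi$ is satisfied in $\mathcal{M}$ exactly when $\chi^{\mathcal{M}} = \hat{0}$; since $\hat{0}$ is the least element of $\mathbb{I}$ and a universally quantified sentence is interpreted as a supremum, $\forall \bar{x}\,\theta(\bar{x})$ is satisfied iff $\theta^{\mathcal{M}}(\bar{a}) = \hat{0}$ for every tuple $\bar{a}$; and, by the clause for implication, $(\varphi \to \psi)^{\mathcal{M}} = \hat{0}$ iff $\varphi^{\mathcal{M}} \ge \psi^{\mathcal{M}}$. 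Throughout, the equality symbol $=$ appearing in (S1)--(E2) is read as the predicate $d$, and for tuples $\bar{x} = \bar{y}$ abbreviates $\bigwedge_i (x_i = y_i)$, so that, since $\wedge$ is interpreted by $\max$, $(\bar{x}=\bar{y})^{\mathcal{M}}(\bar{a},\bar{b}) = \max_i d^{\mathcal{M}}(a_i,b_i)$; I abbreviate this last quantity by $d^{\mathcal{M}}(\bar{a},\bar{b})$.

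For part (1), I would treat the three similarity axioms in order. From (S1), satisfaction of $\forall x\, d(x,x)$ forces $d^{\mathcal{M}}(a,a) = \hat{0}$ for every $a$. From (S2), satisfaction of $\forall x \forall y\, (d(x,y) \to d(y,x))$ gives, by the implication fact, $d^{\mathcal{M}}(a,b) \ge d^{\mathcal{M}}(b,a)$ for all $a,b$; applying this with $a$ and $b$ interchanged yields the reverse inequality, hence the symmetry $d^{\mathcal{M}}(a,b) = d^{\mathcal{M}}(b,a)$. From (S3), since $\wedge$ is interpreted by $\max$, satisfaction of $\forall x \forall y \forall z\,((d(x,y) \wedge d(y,z)) \to d(x,z))$ yields $d^{\mathcal{M}}(a,c) \le \max\{d^{\mathcal{M}}(a,b), d^{\mathcal{M}}(b,c)\}$ for all $a,b,c$. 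Relabelling ($a,b,c \mapsto a,c,b$) and using the symmetry just obtained converts this into the stated form $d^{\mathcal{M}}(a,b) \le \max\{d^{\mathcal{M}}(a,c), d^{\mathcal{M}}(b,c)\}$, so $d^{\mathcal{M}}$ is an extended pseudo-ultrametric.

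For part (2), I would again apply the implication fact, now to the extensionality axioms. For (E1), satisfaction of $\forall \bar{x} \forall \bar{y}\,((\bar{x}=\bar{y}) \to (f(\bar{x}) = f(\bar{y})))$ gives $(\bar{x}=\bar{y})^{\mathcal{M}}(\bar{a},\bar{b}) \ge (f(\bar{x})=f(\bar{y}))^{\mathcal{M}}(\bar{a},\bar{b})$, that is, $d^{\mathcal{M}}(\bar{a},\bar{b}) \ge d^{\mathcal{M}}(f^{\mathcal{M}}(\bar{a}), f^{\mathcal{M}}(\bar{b}))$, which is the $1$-Lipschitz bound for $f$. For (E2), using that $(P(\bar{x}) \leftrightarrow P(\bar{y}))^{\mathcal{M}} = d_{max}(P^{\mathcal{M}}(\bar{a}), P^{\mathcal{M}}(\bar{b}))$, the same step gives $d^{\mathcal{M}}(\bar{a},\bar{b}) \ge d_{max}(P^{\mathcal{M}}(\bar{a}), P^{\mathcal{M}}(\bar{b}))$, the $1$-Lipschitz bound for $P$.

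There is no genuine obstacle here; the argument is a direct computation from the semantics. The only points that need care are bookkeeping ones: reading $=$ as $d$, and fixing the convention that tuple-equality is interpreted as the $\max$ of the coordinatewise $d$-values (so that $d^{\mathcal{M}}(\bar{a},\bar{b})$ is itself the sup-type ultrametric on $M^n$), and the observation that the outer $\forall$ collapses to a pointwise statement precisely because $\hat{0}$ is the minimum of $\mathbb{I}$. Symmetry from (S2) is the only ingredient of part (1) that must be invoked out of turn, namely to recast the triangle inequality into the exact form displayed in the statement.
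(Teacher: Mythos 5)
Your proposal is correct: the paper states this lemma without proof, treating it as a routine consequence of the semantics, and your argument --- unfolding satisfaction as interpretation equal to $\hat{0}$, using that $\hat{0}$ is the least element of $\mathbb{I}$ to collapse the universal quantifier, reading $(\varphi\to\psi)^{\mathcal{M}}=\hat{0}$ as $\varphi^{\mathcal{M}}\ge\psi^{\mathcal{M}}$, and interpreting tuple-equality via $\max$ and the biconditional via $d_{max}$ --- is exactly the intended verification. The relabelling step for (S3), using the symmetry already extracted from (S2), is handled correctly.
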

\begin{rem}
For a given extended pseudo-ultrametric space
$(M,d:M^2\to\mathbb{I})$, one may define an extended
pseudo-ultrametric $d_n(\bar{a},\bar{b})$ on the set of $n$-tuples
of $M^n$ as
\begin{center}
$d_n(\bar{a},\bar{b})=\max\{d^\mathcal{M}(a_i,b_i):1\le i\le n \}$.
\end{center}
We may omit the subscript $n$ if there is no danger of ambiguity.
\end{rem}
We are interested in obtaining an extended ultrametric instead of an
extended pseudo-ultrametric. To this end, we need a weaker
translation of the extensionality axioms.

Following continuous first-order logic, we replace (E1,E2) by
somewhat weaker axioms, ensuring that the interpretation of all
function and predicate symbols are uniformly continuous, i.e., for
any $n\in\mathbb{N}$, there exist $m_P,m_f\in\mathbb{N}$ such that
for all $\bar{a},\bar{b}\in M$,
\begin{quote}
1) if $d^\mathcal{M}(\bar{a},\bar{b})<\widehat{m^{-1}_f}$ then
$d^\mathcal{M}(f^\mathcal{M}(\bar{a}),f^\mathcal{M}(\bar{b}))\le\widehat{n^{-1}}$,\\
2) if $d^\mathcal{M}(\bar{a},\bar{b})<\widehat{m^{-1}_P}$ then
$d_{max}(P^\mathcal{M}(\bar{a}),P^\mathcal{M}(\bar{b}))\le\widehat{n^{-1}}.$
\end{quote}
\begin{defn}
A first-order language $\mathcal{L}$ with a distinguished binary
relation $"d"$ together with a set of modulus of uniform continuity
functions $\{\delta_e:\mathbb{N}\to\mathbb{N}: e$ is either a
function or predicate symbol $\}$ is called a \emph{continuous
ultrametric language}.
\end{defn}
From now on, assume $\mathcal{L}$ is a fixed continuous ultrametric
language. Now, the notion of $\mathcal{L}$-structures is modified
accordingly.
\begin{defn}
A \emph{continuous ultrametric $\mathcal{L}$-pre-structure} or
simply an \emph{ultrametric $\mathcal{L}$-pre-structure} is an
$\mathcal{L}$-structure $\mathcal{M}$ with the following properties:
\begin{enumerate}
\item $d^\mathcal{M}:M^2\to\mathbb{I}$ is an extended pseudo-ultrametric,
\item for every n-ary function symbol "$f$",
$f^\mathcal{M}:(M^n,d^\mathcal{M})\to(M,d^\mathcal{M})$ satisfies
the following condition for any $n\in\mathbb{N}$ and all
$\bar{a},\bar{b}\in M$,
\begin{center}
if $d^\mathcal{M}(\bar{a},\bar{b})<\widehat{(\delta_f(n))^{-1}}$
then $d^\mathcal{M}(f(\bar{a}),f(\bar{b}))<\widehat{n^{-1}}$,
\end{center}
\item for every n-ary predicate symbol "$P$",
$P^\mathcal{M}:(M^n,d^\mathcal{M})\to(\mathbb{I},d_{max})$ satisfies
the following condition for any $n\in\mathbb{N}$ and all
$\bar{a},\bar{b}\in M$,
\begin{center}
if $d^\mathcal{M}(\bar{a},\bar{b})<\widehat{(\delta_P(n))^{-1}}$
then
$d_{max}(P^\mathcal{M}(\bar{a}),P^\mathcal{M}(\bar{b}))<\widehat{n^{-1}}.$
\end{center}
\end{enumerate}
\end{defn}
\begin{rem}
An extended pseudo-ultrametric $d:M^2\to\mathbb{I}$ on the set $M$
is called an extended ultrametric, if $d(a,b)=\hat{0}$ implies that
$a=b$ for any $a,b\in M$.
\end{rem}
Following continuous first-order logic \cite{benped10}, the next
definition is established.
\begin{defn}
A \emph{continuous ultrametric $\mathcal{L}$-structure} or simply an
\emph{ultrametric $\mathcal{L}$-structure} is a ultrametric
$\mathcal{L}$-pre-structure $\mathcal{M}$, where
$d^\mathcal{M}:M^2\to\mathbb{I}$ is an extended ultrametric.
\end{defn}
\begin{lem}
Suppose $\mathcal{M}$ is an ultrametric $\mathcal{L}$-pre-structure.
Then, there exist an ultrametric $\mathcal{L}$-structure
$\mathcal{M}_0$ and a function $f:M\to M_0$ such that for any
$\mathcal{L}$-formula $\varphi(\bar{x})$ and $\bar{a}\in M^n$,
\begin{center}
$\mathcal{M}\models\varphi(\bar{a})$ iff
$\mathcal{M}_0\models\varphi(f(\bar{a}))$.
\end{center}
\end{lem}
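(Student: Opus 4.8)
The plan is to carry out the standard metric-quotient construction, collapsing points that lie at distance $\hat{0}$. First I would define a relation $\sim$ on $M$ by $a\sim b$ if and only if $d^{\mathcal{M}}(a,b)=\hat{0}$. The three pseudo-ultrametric properties from Lemma~\ref{metric} make $\sim$ an equivalence relation: reflexivity and symmetry are immediate, and transitivity follows from the strong triangle inequality, since $d^{\mathcal{M}}(a,c)\le\max\{d^{\mathcal{M}}(a,b),d^{\mathcal{M}}(b,c)\}=\hat{0}$ whenever $a\sim b$ and $b\sim c$. I then set $M_0=M/\!\sim$ and let $f\colon M\to M_0$ be the quotient map $f(a)=[a]$, which is surjective.

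Next I would equip $M_0$ with an $\mathcal{L}$-structure by setting $c^{\mathcal{M}_0}=[c^{\mathcal{M}}]$, $f^{\mathcal{M}_0}([a_1],\dots,[a_n])=[f^{\mathcal{M}}(a_1,\dots,a_n)]$, $P^{\mathcal{M}_0}([a_1],\dots,[a_n])=P^{\mathcal{M}}(a_1,\dots,a_n)$ and $d^{\mathcal{M}_0}([a],[b])=d^{\mathcal{M}}(a,b)$. The crux of the argument is to verify that these are well defined. For $d^{\mathcal{M}_0}$ one applies the strong triangle inequality twice to show that replacing $a,b$ by $\sim$-equivalent $a',b'$ leaves the value $d^{\mathcal{M}}(a,b)$ unchanged. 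For the function and predicate symbols I would invoke the uniform-continuity clauses of the pre-structure together with the following feature of $\mathbb{I}$: if $x\in\mathbb{I}$ satisfies $x<\widehat{n^{-1}}$ for every $n\in\mathbb{N}$, then $x=\hat{0}$, since the first coordinate of $x$ must vanish and $\mathbb{I}$ excludes the points $(0,r)$ with $r>0$. Indeed, if $a_i\sim a_i'$ for all $i$, then $d^{\mathcal{M}}(\bar{a},\bar{a}')=\hat{0}<\widehat{(\delta_f(n))^{-1}}$ for every $n$, so $d^{\mathcal{M}}(f^{\mathcal{M}}(\bar{a}),f^{\mathcal{M}}(\bar{a}'))<\widehat{n^{-1}}$ for every $n$, whence it equals $\hat{0}$ and $f^{\mathcal{M}}(\bar{a})\sim f^{\mathcal{M}}(\bar{a}')$; the same argument applied with $d_{max}$ gives $P^{\mathcal{M}}(\bar{a})=P^{\mathcal{M}}(\bar{a}')$, because $d_{max}(x,y)=\hat{0}$ forces $x=y$. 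Since distances are preserved, $\mathcal{M}_0$ is again a pre-structure with the same moduli $\delta_f,\delta_P$, and now $d^{\mathcal{M}_0}([a],[b])=\hat{0}$ forces $[a]=[b]$; thus $d^{\mathcal{M}_0}$ is an extended ultrametric and $\mathcal{M}_0$ is an ultrametric $\mathcal{L}$-structure.

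Finally I would prove, by induction on complexity, that $\varphi^{\mathcal{M}_0}(f(\bar{a}))=\varphi^{\mathcal{M}}(\bar{a})$ for every formula $\varphi(\bar{x})$ and every $\bar{a}\in M^n$; specializing the equal values to $\hat{0}$ then yields the claimed equivalence $\mathcal{M}\models\varphi(\bar{a})$ iff $\mathcal{M}_0\models\varphi(f(\bar{a}))$. A preliminary term induction establishes $t^{\mathcal{M}_0,f\circ\sigma}=[t^{\mathcal{M},\sigma}]$, after which the atomic case is precisely the definition of $P^{\mathcal{M}_0}$. The connective cases are immediate, since $\wedge$, $\to$ and the rational constants $\bar{r}$ are interpreted by the same operations on the same value set $\mathbb{I}$ in both structures. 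The only step requiring care is the quantifier case, and this is where surjectivity of $f$ enters: for $\forall x\,\varphi$ one has $(\forall x\,\varphi)^{\mathcal{M}_0}(f(\bar{a}))=\sup\{\varphi^{\mathcal{M}_0}(b,f(\bar{a})):b\in M_0\}$, and writing each $b$ as $f(a')$ and applying the induction hypothesis rewrites this as $\sup\{\varphi^{\mathcal{M}}(a',\bar{a}):a'\in M\}=(\forall x\,\varphi)^{\mathcal{M}}(\bar{a})$; the $\exists$ case is identical with $\inf$. The main obstacle is therefore concentrated in the well-definedness step of the second paragraph, where the uniform-continuity axioms and the order-theoretic peculiarity of $\mathbb{I}$ must be combined, while the term and formula inductions are routine.
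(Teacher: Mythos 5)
Your proposal is correct and follows essentially the same route as the paper: quotient $M$ by the relation $a\sim b$ iff $d^{\mathcal{M}}(a,b)=\hat{0}$, push the interpretations down to the quotient, use the uniform-continuity clauses to check well-definedness, and conclude by induction on formulas. The paper leaves the well-definedness and the induction as brief remarks, whereas you fill in the details (in particular the observation that an element of $\mathbb{I}$ below every $\widehat{n^{-1}}$ must equal $\hat{0}$, and the role of surjectivity of $f$ in the quantifier step), all of which is accurate.
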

\begin{proof}
Define the equivalence relation $\sim$ as
\begin{center}
$a\sim b$ iff $d^\mathcal{M}(a,b)=\hat{0}$.
\end{center}
Denote the equivalence class of $a\in M$ by $[a]_\sim$. Now, let
$M_0$ be the set of the equivalence classes of $\sim$ in $M$. Define
\begin{center}
$d^{\mathcal{M}_0}([a]_\sim,[b]_\sim)=d^{\mathcal{M}}(a,b)$.
\end{center}
Then, $(M_0,d^{\mathcal{M}_0})$ is an extended ultrametric space.
Furthermore, for any predicate and function symbols $P$ and $f$ set
\begin{center}
$P^{\mathcal{M}_0}([a_1]_\sim, ..., [a_{n_P}]_\sim)=P^{\mathcal{M}}(a_1, ..., a_{n_P})$,\\
$f^{\mathcal{M}_0}([a_1]_\sim, ...,
[a_{n_f}]_\sim)=f^{\mathcal{M}}(a_1, ..., a_{n_f})$.
\end{center}
The uniform continuity of the interpretation of predicate and
function symbols in $\mathcal{M}$ implies that the above definitions
are well-defined. So, if predicate and function symbols satisfy the
uniform continuity axioms then we get an $\mathcal{L}$-structure
$\mathcal{M}_0$.

Define $f:M\to M_0$ as $f(a)=[a]_\sim$. Obviously,
$\varphi^{\mathcal{M}_0}([a_1]_\sim, ...,
[a_n]_\sim)=\varphi^\mathcal{M}(a_1, ..., a_n)$ for any
$\mathcal{L}$-formula $\varphi(\bar{x})$ and $\bar{a}\in M^n$.
Whence, $f$ is the desirable function.
\end{proof}
The axiom system of ultrametric logic consists of the axioms of
$RGL^*$ together with the following axioms:
\begin{itemize}
\item[S1.] $\forall x\, (d(x,x)),$
\item[S2.] $\forall x\forall y\,\big(d(x,y)\to d(y,x)\big),$
\item[S3.] $\forall x\forall y\forall z\,\Big(\big(d(x,y)\wedge d(y,z)\big)\to
  d(x,z)\Big),$
\end{itemize}
for any function symbol $f$, and any predicate symbol $P$ and any
natural number $n$,
\begin{itemize}
\item[WE1.]
$\forall\bar{x}\forall\bar{y}
\Big(\big(d(\bar{x},\bar{y})\to\overline{\delta_f(n)}\big)\vee\big(\overline{n^{-1}}\to
d(f(\bar{x}),f(\bar{y}))\big)\Big)$,
\item[WE2.]
$\forall\bar{x}\forall\bar{y}
\Big(\big(d(\bar{x},\bar{y})\to\overline{\delta_P(n)}\big)\vee\big(\overline{n^{-1}}\to(p(\bar{x})\leftrightarrow
p(\bar{y}))\big)\Big)$.
\end{itemize}
The inference rules are the same as $RGL^*$. The ultrametric logic
is denoted by UML.
\begin{cor}
Any strongly consistent (resp. finitely satisfiable) theory is
satisfiable in UML.
\end{cor}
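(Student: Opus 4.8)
The plan is to treat UML as $RGL^*$ together with the fixed set $\Delta$ of all instances of S1--S3 and WE1--WE2, and then to extract a model directly from the completeness and compactness results already established for $RGL^*$. The first observation is that a UML-theory $T$ is strongly consistent (resp. finitely satisfiable) in UML exactly when $T\cup\Delta$ is strongly consistent (resp. finitely satisfiable) in $RGL^*$: the proof system of UML is literally that of $RGL^*$ with the sentences of $\Delta$ adjoined, so $T\nvdash_{UML}\bar r$ iff $T\cup\Delta\nvdash_{RGL^*}\bar r$; and for satisfiability, any finite $S\subseteq T\cup\Delta$ splits as $S_0\cup S_1$ with $S_0\subseteq T$ and $S_1\subseteq\Delta$ finite, where an ultrametric model $\mathcal{N}$ of $S_0$ (furnished by finite satisfiability of $T$ in UML) automatically satisfies every sentence of $\Delta$, hence $\mathcal{N}\models S$.

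With this reduction in hand, the strongly consistent case invokes Theorem \ref{completeness of rgl} to produce an $\mathcal{L}$-structure $\mathcal{M}$ with $\mathcal{M}\models T\cup\Delta$, while the finitely satisfiable case uses the observation above to see that $T\cup\Delta$ is finitely satisfiable in $RGL^*$ and then applies Corollary \ref{compactness th} to obtain the same conclusion. The remaining work is to upgrade such an $\mathcal{M}$ to a genuine ultrametric $\mathcal{L}$-structure modeling $T$.

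The heart of the matter is to read off from $\mathcal{M}\models\Delta$ that $\mathcal{M}$ is an ultrametric $\mathcal{L}$-pre-structure. Since $\mathcal{M}\models$ S1--S3, Lemma \ref{metric}(1) immediately gives that $d^\mathcal{M}$ is an extended pseudo-ultrametric. For the uniform-continuity clauses I would unwind the satisfaction of WE1 and WE2: because $\vee$ is interpreted by $\min$ and $\forall$ by $\sup$, a sentence of this shape evaluating to $\hat{0}$ forces, for all $\bar a,\bar b$, at least one of its two disjuncts to equal $\hat{0}$; applying $(\varphi\to\psi)^\mathcal{M}=\hat{0}\iff\varphi^\mathcal{M}\ge\psi^\mathcal{M}$ to each disjunct of WE1 then says that $d^\mathcal{M}(\bar a,\bar b)<\widehat{(\delta_f(n))^{-1}}$ implies $d^\mathcal{M}(f^\mathcal{M}(\bar a),f^\mathcal{M}(\bar b))\le\widehat{n^{-1}}$, and likewise WE2 controls $d_{max}(P^\mathcal{M}(\bar a),P^\mathcal{M}(\bar b))$. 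This is precisely the uniform-continuity requirement, so $\mathcal{M}$ is an ultrametric $\mathcal{L}$-pre-structure. I would then apply the quotient lemma proved above (passing from a pre-structure to its associated ultrametric structure via $a\sim b$ when $d^\mathcal{M}(a,b)=\hat{0}$) to obtain an ultrametric $\mathcal{L}$-structure $\mathcal{M}_0$ with $\varphi^{\mathcal{M}_0}=\varphi^\mathcal{M}$ for every sentence $\varphi$; since $T\subseteq T\cup\Delta$, this yields $\mathcal{M}_0\models T$, as required.

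The only delicate point is the mismatch between the non-strict inequality $\le\widehat{n^{-1}}$ delivered by WE1/WE2 and the strict inequality $<\widehat{n^{-1}}$ demanded in the definition of a pre-structure. I expect this to be purely bookkeeping: applying the axiom instance at level $n+1$ gives $d^\mathcal{M}(f^\mathcal{M}(\bar a),f^\mathcal{M}(\bar b))\le\widehat{(n+1)^{-1}}<\widehat{n^{-1}}$ whenever $d^\mathcal{M}(\bar a,\bar b)<\widehat{(\delta_f(n+1))^{-1}}$, so replacing the modulus $\delta_f$ by $n\mapsto\delta_f(n+1)$ (and similarly $\delta_P$) recovers the strict condition. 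Beyond this reindexing the argument is a direct transfer through the $RGL^*$ completeness and compactness machinery, so I anticipate no genuine obstacle.
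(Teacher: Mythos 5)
Your argument is correct and is precisely the intended one: the paper states this corollary without proof, and the evident justification is exactly your reduction of UML strong consistency (resp.\ finite satisfiability) of $T$ to that of $T\cup\Delta$ in $RGL^*$, followed by Theorem \ref{completeness of rgl} (resp.\ Corollary \ref{compactness th}), Lemma \ref{metric}, the unwinding of WE1--WE2, and the quotient lemma passing from a pre-structure to an ultrametric structure. Your reindexing of the moduli to bridge the non-strict inequality delivered by WE1/WE2 and the strict one in the definition of a pre-structure correctly disposes of a minor mismatch that the paper glosses over.
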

\section{Some Model Theory for Ultrametric Logic}\label{section model theory}
In this section, some model theory for UML is developed. Using the
machinery developed here, the Robinson consistency theorem (Theorem
\ref{Robinson}) is proved. To simplify the notation, by an
$\mathcal{L}$-structure we mean an ultrametric
$\mathcal{L}$-structure.
\begin{defn}Let $\mathcal{M}$ and
$\mathcal{N}$ be two $\mathcal{L}$-structures.
\begin{enumerate}
\item $\mathcal{M}$ and $\mathcal{N}$ are \emph{elementary
equivalent}, denoted by $\mathcal{M}\equiv\mathcal{N}$, if
$\varphi^\mathcal{M}=\varphi^\mathcal{N}$ for any $\varphi\in
Sent(\mathcal{L})$.
\item An \emph{embedding} is a function $j:M\to N$ with the following properties. For any function symbol $f$ and predicate
symbol $P$,
\begin{itemize}
\item[a)] $j\big(f^\mathcal{M}(a_1, ..., a_{n_f})\big)=f^\mathcal{N}\big(j(a_1), ..., j(a_{n_f})\big)$, $f\in\mathcal{L}$ and $\bar{a}\in M^{n_f}$,
\item[b)] $P^\mathcal{M}(a_1, ..., a_{n_P})=P^\mathcal{N}\big(j(a_1), ..., j(a_{n_P})\big)$, $P\in\mathcal{L}$ and $\bar{a}\in M^{n_P}$.
\end{itemize}
It is easy to see that for every quantifier-free
$\mathcal{L}$-formula $\varphi(\bar{x})$ and any element $\bar{a}\in
M^n$,
\begin{gather*}\label{emb def}
\hspace{-2cm}(*)\hspace{1cm}\varphi^\mathcal{M}(a_1, ...,
a_n)=\varphi^\mathcal{N}\big(j(a_1), ..., j(a_n)\big).
\end{gather*}
Moreover, an embedding is an isometry, i.e.,
$d^\mathcal{M}(a,b)=d^\mathcal{N}(j(a),j(b))$ for any $a,b,\in M$.
Therefore, $j$ is injective.
\item The embedding $j$ is \emph{elementary}, if ($*$) holds for any
$\mathcal{L}$-formula $\varphi(\bar{x})$. In this case, $j$ is
denoted by $j:\mathcal{M}\hookrightarrow\mathcal{N}$.
\item An \emph{isomorphism} is a  surjective elementary embedding.
\end{enumerate}
\end{defn}
\begin{rem}
The notion of \emph{substructure} (resp. \emph{elementary
substructure}) is a special case of the above definition, where
$M\subseteq N$ and the inclusion map is embedding (resp. elementary
embedding). In this case, we write $\mathcal{M}\subseteq
\mathcal{N}$ (resp. $\mathcal{M}\preceq\mathcal{N}$).
\end{rem}
\begin{rem}
As in the first-order logic, $Th_{\mathcal{L}}(\mathcal{M})$ is the
full theory of $\mathcal{L}$-structure $\mathcal{M}$, that
$Th_{\mathcal{L}}(\mathcal{M})=\{\varphi\in
Sent(\mathcal{L},\mathcal{A}): \mathcal{M}\models\varphi\}$.
\end{rem}
While in first-order logic or even in continuous first-order logic
the notion of embedding (resp. elementary embedding) can be captured
by means of the notion of diagram (resp. elementary diagram), the
lack of such feature for ultrametric logic leads to the following
definitions.
\begin{defn}
Let $\mathcal{M}$ be an $\mathcal{L}$-structure,
$\varphi(\bar{x})\in Form(\mathcal{L})$ and $\bar{a}\in M$. The
\emph{truth degree} of $\varphi^\mathcal{M}(\bar{a})$,
$st_\mathcal{M}(\varphi(\bar{a}))$, is defined by
\begin{center}
$\inf\{r\in[0,1]_\mathbb{Q}:\mathcal{M}\models\bar{r}\to\varphi(\bar{a})\}\in[0,1]$.
\end{center}
\end{defn}
\begin{defn}\label{weak def}
Let $\mathcal{M}$ and $\mathcal{N}$ be two $\mathcal{L}$-structures.
\begin{enumerate}
\item $\mathcal{M}$ and $\mathcal{N}$ are \emph{weakly elementary equivalent},
$\mathcal{M}\wequiv\mathcal{N}$, if
$st_\mathcal{M}(\varphi)=st_\mathcal{N}(\varphi)$ for any
$\mathcal{L}$-sentence $\varphi$. Observe that if
$\mathcal{M}\wequiv\mathcal{N}$ then for every two
$\mathcal{L}$-sentences $\varphi$ and $\psi$,
\begin{center}
$\varphi^\mathcal{M}\le\psi^\mathcal{M}$ iff
$\varphi^\mathcal{N}\le\psi^\mathcal{N}$.
\end{center}
\item A function $j:M\to N$ is called a \emph{weak elementary $\mathcal{L}$-embedding}, denoted
by $j:\mathcal{M}\wembed_{_\mathcal{L}}\mathcal{N}$, if it has the
following properties,
\begin{itemize}
\item[a)] $j(f^\mathcal{M}(a_1, ..., a_{n_f}))=f^\mathcal{N}\big(j(a_1), ..., j(a_{n_f})\big)$, $f\in\mathcal{L}$ and $\bar{a}\in M^{n_f}$,
\item[b)] $st_\mathcal{M}(\varphi(a_1, ..., a_n))=st_\mathcal{N}\big(\varphi(j(a_1), ..., j(a_n))\big)$,
$\varphi(\bar{x})\in Form(\mathcal{L})$ and $\bar{a}\in M^n$.
\end{itemize}
\item Let $\mathcal{L}(M)=\mathcal{L}\cup\{c_m: m\in M\}$, where $\{c_m\}_{m\in M}$ are some
new constant symbols. One can naturally interpret any $c_m$ inside
the $\mathcal{L}$-structure $\mathcal{M}$ by $m$. So, any
$\mathcal{L}$-structure $\mathcal{M}$ can be viewed as an
$\mathcal{L}(M)$-structure. Now, the \emph{weak elementary diagram}
of $\mathcal{M}$ is
\begin{center}
$wediag_{_\mathcal{L}}(\mathcal{M})=Th_{\mathcal{L}(M)}(\mathcal{M})$.
\end{center}
\end{enumerate}
\end{defn}
If there is no danger of confusion we may drop the subscript
$\mathcal{L}$ and simply write $j:\mathcal{M}\wembed\mathcal{N}$ and
$wediag(\mathcal{M})$.
\begin{rem}
Note that if $j:\mathcal{M}\wembed\mathcal{N}$, then whenever
$j(a)=j(b)$ for $a,b\in M$, we have
$d^\mathcal{N}(j(a),j(b))=\hat{0}$. Hence,
$d^\mathcal{M}(a,b)=\hat{0}$ which implies $a=b$. So, $j$ is an
injective function.
\end{rem}
The following lemma justifies why we call
$Th_{\mathcal{L}(M)}(\mathcal{M})$ a weak elementary diagram, and
why we need the additional concepts that are defined in Definition
\ref{weak def}.
\begin{lem}\label{tozih}
Let $\mathcal{M}$ be an $\mathcal{L}$-structure and $\mathcal{N}$ be
an $\mathcal{L}(M)$-structure such that $\mathcal{N}\models
wediag(\mathcal{M})$. Then, there is a weak elementary
$\mathcal{L}$-embedding $j:\mathcal{M}\wembed\mathcal{N}$.
\end{lem}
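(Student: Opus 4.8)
The plan is to read the embedding off the new constants of $\mathcal{L}(M)$. Define $j\colon M\to N$ by $j(m)=c_m^{\mathcal{N}}$. The only mechanism at my disposal is that, since $\mathcal{N}\models wediag(\mathcal{M})$, every $\mathcal{L}(M)$-sentence \emph{true} in $\mathcal{M}$ is also true in $\mathcal{N}$; this transfer is one-directional, so the whole argument must be arranged so that each fact I need about $\mathcal{N}$ is witnessed by a sentence satisfied in $\mathcal{M}$.

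For clause (a), fix a function symbol $f$ and $\bar{a}\in M^{n_f}$ and put $m=f^{\mathcal{M}}(\bar{a})$. The sentence $d\big(c_m,f(c_{a_1},\dots,c_{a_{n_f}})\big)$ holds in $\mathcal{M}$ because $d^{\mathcal{M}}(m,m)=\hat{0}$, so it lies in $wediag(\mathcal{M})$ and therefore holds in $\mathcal{N}$; that is, $d^{\mathcal{N}}\big(j(m),f^{\mathcal{N}}(j(a_1),\dots,j(a_{n_f}))\big)=\hat{0}$. Since $\mathcal{N}$ is an \emph{ultrametric} $\mathcal{L}(M)$-structure, $d^{\mathcal{N}}(x,y)=\hat{0}$ forces $x=y$, which yields $j(f^{\mathcal{M}}(\bar{a}))=f^{\mathcal{N}}(j(\bar{a}))$.

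For clause (b) I first observe that $st_{\mathcal{M}}(\varphi(\bar{a}))$ is nothing but the first coordinate of $\varphi^{\mathcal{M}}(\bar{a})\in\mathbb{I}$: indeed $\mathcal{M}\models\bar{r}\to\varphi(\bar{a})$ iff $\hat{r}\ge\varphi^{\mathcal{M}}(\bar{a})$, and the infimum of those rationals $r$ with $(r,r)\ge(x,y)$ equals $x$. Writing $\varphi^{\mathcal{M}}(\bar{a})=(x,y)$ and $\varphi^{\mathcal{N}}(j(\bar{a}))=(x',y')$, it remains to prove $x=x'$. To get $x'\le x$, assume toward a contradiction that $x<x'$ and choose a rational $r$ with $x<r<x'$; then $\hat{r}\ge\varphi^{\mathcal{M}}(\bar{a})$, so $\bar{r}\to\varphi(c_{\bar{a}})\in wediag(\mathcal{M})$, which transfers to $\hat{r}\ge\varphi^{\mathcal{N}}(j(\bar{a}))$ and hence $r\ge x'$, contradicting $r<x'$. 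For the reverse inequality $x\le x'$ (trivial when $x=0$) I pass to the strong implication $\Rightarrow$: if $x'<r<x$ with $r>0$, then $\varphi^{\mathcal{M}}(\bar{a})>\hat{r}$, so by the observation recorded just after the definition of $\Rightarrow$ we have $\mathcal{M}\models\varphi(c_{\bar{a}})\Rightarrow\bar{r}$; transferring this to $\mathcal{N}$ gives $\varphi^{\mathcal{N}}(j(\bar{a}))>\hat{r}$, i.e.\ $x'\ge r$, contradicting $r>x'$.

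The main obstacle is exactly the lower bound $x\le x'$ in clause (b). Since $wediag(\mathcal{M})$ moves information only from $\mathcal{M}$ to $\mathcal{N}$, I cannot argue directly that a small truth value in $\mathcal{N}$ comes from a small value in $\mathcal{M}$; the resolution is to encode the strict inequality $\varphi^{\mathcal{M}}(\bar{a})>\hat{r}$ \emph{internally} as satisfaction of the true sentence $\varphi(c_{\bar{a}})\Rightarrow\bar{r}$, so that one-way transfer still applies. The rational constants $\bar{r}$ together with the connective $\Rightarrow$ are precisely the expressive resources that let me compare both lexicographic coordinates. Finally, the injectivity of $j$ and the fact that it is an isometry follow formally from clause (b) applied to $d$, as in the remark preceding the lemma, so no separate verification is required.
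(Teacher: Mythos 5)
Your proof is correct and is exactly the argument the paper intends (its own ``proof'' is just the word ``Clear''): you take $j(m)=c_m^{\mathcal N}$, use that $\mathcal N$ is a genuine ultrametric structure to upgrade $d^{\mathcal N}=\hat 0$ to equality for clause (a), and handle the only delicate point --- the lower bound $st_{\mathcal M}(\varphi(\bar a))\le st_{\mathcal N}(\varphi(j(\bar a)))$ under one-directional transfer --- by encoding $\varphi^{\mathcal M}(\bar a)>\hat r$ as satisfaction of $\varphi(c_{\bar a})\Rightarrow\bar r$. No gaps.
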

\begin{proof}
Clear.
\end{proof}
\begin{lem}\label{strong embeding}
Let $\mathcal{M}$, $\mathcal{N}$ and $\mathcal{B}$ be three
countable $\mathcal{L}$-structures and
$i:\mathcal{M}\wembed\mathcal{B}$ and
$j:\mathcal{N}\wembed\mathcal{B}$ be weak elementary embeddings.
Then, there is an $\mathcal{L}$-structure $\mathcal{B}'$ with the
same universe as $\mathcal{B}$ such that
$\mathcal{M}\wembed\mathcal{B}'$ and
$\mathcal{N}\hookrightarrow\mathcal{B}'$.
\end{lem}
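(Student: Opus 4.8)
We are given two weak elementary embeddings $i:\mathcal{M}\wembed\mathcal{B}$ and $j:\mathcal{N}\wembed\mathcal{B}$ of countable structures, and we must produce a structure $\mathcal{B}'$ on the same universe as $\mathcal{B}$ into which $\mathcal{M}$ embeds weakly elementarily and $\mathcal{N}$ embeds (genuinely, in the strong sense of the embedding definition, so that the interpretation of every predicate is preserved on the nose, not merely up to truth degree). The asymmetry is the whole point: a weak elementary embedding preserves only $st_{\mathcal{M}}$ of formulas, i.e. the "first coordinate" information coming from the standard interpolant in $[0,1]$, whereas a genuine embedding must preserve the full $\mathbb{I}$-valued interpretation $P^{\mathcal{N}}(\bar a)=P^{\mathcal{B}'}(j(\bar a))$. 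So the plan is to \emph{redefine the predicate interpretations} of $\mathcal{B}$ on the range of $j$ to match $\mathcal{N}$ exactly, while keeping $\mathcal{M}$'s weak data intact.

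**The construction.**
First I would fix the universe $B$ of $\mathcal{B}$ and keep the function and constant interpretations of $\mathcal{B}$ unchanged (note $i,j$ commute with functions strictly, so this costs nothing). For each predicate symbol $P$ and each tuple $\bar b\in B^{n}$ I define
\[
P^{\mathcal{B}'}(\bar b)=
\begin{cases}
P^{\mathcal{N}}(\bar a) & \text{if } \bar b=j(\bar a)\text{ for some }\bar a\in N^{n},\\
P^{\mathcal{B}}(\bar b) & \text{otherwise.}
\end{cases}
\]
This is well-defined because $j$ is injective (by the Remark following Definition \ref{weak def}). The immediate obligation is to check that $\mathcal{B}'$ is still an ultrametric $\mathcal{L}$-structure: the metric $d^{\mathcal{B}'}$ inherited from $\mathcal{B}$ is unchanged, so it remains an extended ultrametric, and I must verify the uniform-continuity clauses for the redefined $P^{\mathcal{B}'}$. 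On tuples in the image of $j$ this follows because $j$ is an isometry (it preserves $d$, being metric-preserving via clause (b) of the weak embedding applied to the formula $d(\bar x,\bar y)$) and $P^{\mathcal{N}}$ is uniformly continuous in $\mathcal{N}$; the genuinely mixed case, where one tuple lies in the range of $j$ and another does not, is where the ultrametric triangle inequality must be invoked to patch the two moduli together, and this is the point demanding care.

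**The two embedding properties.**
By construction $j:\mathcal{N}\to\mathcal{B}'$ now satisfies $P^{\mathcal{N}}(\bar a)=P^{\mathcal{B}'}(j(\bar a))$ exactly, and the function clause was never disturbed, so $j:\mathcal{N}\hookrightarrow\mathcal{B}'$ is a genuine embedding in the sense of the embedding definition. For $\mathcal{M}$ I must show $i:\mathcal{M}\wembed\mathcal{B}'$ remains a \emph{weak} elementary embedding, i.e. $st_{\mathcal{M}}(\varphi(\bar a))=st_{\mathcal{B}'}(\varphi(i(\bar a)))$ for every formula $\varphi$. Since weak elementary equivalence depends only on the truth degrees $st(\cdot)$, it suffices to show $\mathcal{B}'\wequiv\mathcal{B}$ on the relevant data, or more precisely that the alteration on the measure-zero-looking set $\mathrm{range}(j)$ does not change any truth degree seen through $i$.

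**The main obstacle.**
The hard part is precisely this last verification: I have changed the predicate interpretation of $\mathcal{B}$ on the image of $j$, and I must argue that this change is invisible to the weak theory witnessed by $i(\mathcal{M})$. The plan is to exploit that weak elementary equivalence sees only truth degrees, and that $st$ is computed via the standard first coordinate, where $i$ and $j$ are assumed compatible through their common target $\mathcal{B}$. Concretely, since both $i$ and $j$ land in $\mathcal{B}$ and preserve all $st$-data, the original predicate values $P^{\mathcal{B}}(j(\bar a))$ and the new values $P^{\mathcal{N}}(\bar a)$ have the same truth degree, so replacing one by the other alters each predicate value only within a single $\langle\cdot\rangle_{\mathcal{G}}$-class (the fibres of $st$). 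I would prove by induction on formula complexity, exactly as in the inductive argument of Lemma \ref{complete}, that perturbing atomic values within a common $st$-fibre leaves $st_{\mathcal{B}'}(\varphi)=st_{\mathcal{B}}(\varphi)$ for every sentence; the quantifier steps go through because $st$ commutes with $\sup$ and $\inf$ along the continuous order-preserving collapse, and the connective steps because $\to$, $\wedge$, $\vee$ respect the $st$-fibre structure. Granting this, $st_{\mathcal{B}'}(\varphi(i(\bar a)))=st_{\mathcal{B}}(\varphi(i(\bar a)))=st_{\mathcal{M}}(\varphi(\bar a))$, which is the required weak elementarity of $i$ into $\mathcal{B}'$, completing the proof.
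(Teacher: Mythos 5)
Your overall strategy --- modify only the predicate interpretations of $\mathcal{B}$ so that $j$ becomes value-preserving while $i$ keeps its truth degrees --- is the right instinct, but the specific construction (a pointwise patch of $P^{\mathcal{B}}$ on the image of $j$ only) does not work, and the two places where you defer the difficulty are exactly where it breaks. First, the inductive claim that replacing each atomic value by another value in the same $st$-fibre leaves every $st_{\mathcal{B}'}(\varphi)$ unchanged is false: the G\"{o}del implication is sensitive to the \emph{order} of values inside a fibre, not just to their truth degrees. For instance, if $P^{\mathcal{B}}(j(\bar a))=(0.5,0.3)$ and $Q^{\mathcal{B}}(\bar b)=(0.5,0.4)$ with $\bar b$ outside the range of $j$, then $(P\to Q)^{\mathcal{B}}=(0.5,0.4)$ has truth degree $0.5$; if the patch moves the first value to $(0.5,0.45)$ (same $st$), the implication drops to $\hat 0$ and its truth degree becomes $0$. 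Since your patch changes values only at tuples in $j(N^n)$ and leaves all other values alone, it can reverse order relations between patched and unpatched values within a single fibre, so the induction fails already at the $\to$ step, and with it the weak elementarity of $i$ into $\mathcal{B}'$. Second, the ``mixed case'' of uniform continuity that you flag but do not resolve genuinely fails for the same reason: a tuple $j(\bar a)$ and a nearby tuple $\bar b\notin j(N^n)$ can satisfy $P^{\mathcal{B}}(j(\bar a))=P^{\mathcal{B}}(\bar b)$, hence $d_{max}=\hat 0$ before the patch, while afterwards $P^{\mathcal{B}'}(j(\bar a))\ne P^{\mathcal{B}'}(\bar b)$ and $d_{max}$ jumps to the maximum of the two values, no matter how small $d^{\mathcal{B}}(j(\bar a),\bar b)$ is. Third, even granting the patch, you only obtain preservation of \emph{atomic} formulas along $j$; the conclusion $\mathcal{N}\hookrightarrow\mathcal{B}'$ in this paper means an \emph{elementary} embedding (this is how the lemma is used in the amalgamation theorem), and quantifiers in $\mathcal{B}'$ range over all of $B$, so nothing in your construction forces $\varphi^{\mathcal{B}'}(j(\bar a))=\varphi^{\mathcal{N}}(\bar a)$ for quantified $\varphi$.

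The paper avoids all three problems by recalibrating $\mathcal{B}$ \emph{globally} rather than pointwise: since $j$ is weak elementary, the countable sets $\{\varphi^{\mathcal{N}}(\bar a)\}$ and $\{\varphi^{\mathcal{B}}(j(\bar a))\}$ (ranging over all formulas, including quantified ones) are order-isomorphic, so as in Theorem \ref{embedding} one finds a continuous order-preserving $h:\mathbb{I}\to\mathbb{I}$ with $h(\hat r)=\hat r$ for all rationals $r$ and $h\big(\varphi^{\mathcal{B}}(j(\bar a))\big)=\varphi^{\mathcal{N}}(\bar a)$. Setting $P^{\mathcal{B}'}(\bar b)=h\big(P^{\mathcal{B}}(\bar b)\big)$ for \emph{every} $\bar b$ preserves all order relations and all suprema and infima, so the induction $\varphi^{\mathcal{B}'}=h\circ\varphi^{\mathcal{B}}$ goes through for all formulas; fixing the rationals keeps every truth degree, hence $i$ stays weak elementary; and the defining property of $h$ on the values at $j(N^n)$ makes $j$ fully elementary. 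If you want to salvage your write-up, replace the case-split definition of $P^{\mathcal{B}'}$ by this single global recalibration.
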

\begin{proof}
Since $j$ is a weak elementary embedding, the following two subsets
of $\mathbb{I}$
\begin{center}
$A=\{\varphi^\mathcal{N}(a_1, ..., a_n): \varphi(x_1, ..., x_n)\in
Form(\mathcal{L}), a_1, ..., a_n\in N\}$,\\
$B=\{\varphi^\mathcal{B}(j(a_1), ..., j(a_n)): \varphi(x_1, ...,
x_n)\in Form(\mathcal{L}), a_1, ..., a_n\in N\}$
\end{center}
satisfy the following properties:
\begin{itemize}
\item[1)] $\varphi^\mathcal{N}(a_1, ...,
a_n)<\psi^\mathcal{N}(b_1, ..., b_m)$ iff
$\varphi^\mathcal{B}(j(a_1), ..., j(a_n))<\psi^\mathcal{B}(j(b_1),
..., j(b_m))$,
\item[2)] $\varphi^\mathcal{N}(a_1, ...,
a_n)=\psi^\mathcal{N}(b_1, ..., b_m)$ iff
$\varphi^\mathcal{B}(j(a_1), ..., j(a_n))=\psi^\mathcal{B}(j(b_1),
..., j(b_m))$.
\end{itemize}
By the same idea used in Theorem \ref{embedding}, one may find a
continuous order preserving function $h:\mathbb{I}\to\mathbb{I}$
such that
\begin{itemize}
\item[1)] $h(\hat{r})=\hat{r}$,
\item[2)] $h(\varphi^\mathcal{B}(j(a_1), ..., j(a_n)))=\varphi^\mathcal{N}(a_1, ...,
a_n)$.
\end{itemize}
Now, define an $\mathcal{L}$-structure $\mathcal{B}'$ in such a way
that the interpretations of constant and function symbols are the
same as $\mathcal{B}$ and for any predicate $P$, we have
$P^{\mathcal{B}'}(\bar{b})=h(P^{\mathcal{B}}(\bar{b}))$. A simple
induction shows that for any formula $\varphi(x_1, ..., x_n)$ and
$b_1, ..., b_n\in B$,
$\varphi^{\mathcal{B}'}(\bar{b})=h(\varphi^\mathcal{B}(\bar{b}))$.
Moreover, the function $j:\mathcal{N}\to\mathcal{B}'$ is an
elementary embedding, while the function
$i:\mathcal{M}\to\mathcal{B}'$ is a weak elementary embedding.
\end{proof}
The above proof highlights a special but important instant of the
notion of weak elementary equivalence.
\begin{defn}
Let $\mathcal{M}$ and $\mathcal{M}'$ be two $\mathcal{L}$-structures
with the same universe $M$. Suppose $h:\mathbb{I}\to\mathbb{I}$ is
an order preserving function. We say that $\mathcal{M}$ and
$\mathcal{M}'$ are $h$-equivalent if
$\varphi^{\mathcal{M}'}(\bar{a})=h(\varphi^\mathcal{M}(\bar{a}))$.
In this case, we write $\mathcal{M}\equiv_h\mathcal{M}'$.
\end{defn}
\begin{thm}(Amalgamation property)\label{amalgamation th}
Let $\mathcal{B},\mathcal{D}$ and $\mathcal{E}$ be three countable
$\mathcal{L}$-structures. Suppose also,
$j:\mathcal{E}\wembed\mathcal{B}$ and
$k:\mathcal{E}\wembed\mathcal{D}$ are weak elementary embeddings.
Then, there are an $\mathcal{L}$-structure $\mathcal{G}$ and
embeddings $j_1:\mathcal{B}\wembed\mathcal{G}$ and
$k_1:\mathcal{D}\hookrightarrow\mathcal{G}$ such that $j_1\circ
j=k_1\circ k$.
\end{thm}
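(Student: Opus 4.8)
The plan is to prove the amalgamation in two stages. First I would build a countable structure $\mathcal{C}$ into which both $\mathcal{B}$ and $\mathcal{D}$ embed \emph{weakly} elementarily and compatibly over $\mathcal{E}$, and then invoke Lemma \ref{strong embeding} to rescale $\mathcal{C}$ so that the embedding of $\mathcal{D}$ becomes a genuine elementary embedding while the embedding of $\mathcal{B}$ stays weak.

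For the first stage I would work with weak elementary diagrams. Form the language $\mathcal{L}^*=\mathcal{L}\cup\{c_b:b\in B\}\cup\{c_d:d\in D\}$ in which the constants coming from $\mathcal{E}$ are identified, so that $c_{j(e)}$ and $c_{k(e)}$ are the \emph{same} symbol for every $e\in E$, and consider the $\mathcal{L}^*$-theory $T=wediag(\mathcal{B})\cup wediag(\mathcal{D})$, the diagram of $\mathcal{B}$ being rewritten through this identification. The heart of the matter is to show that $T$ is strongly consistent; then the UML version of the completeness theorem (the corollary following WE2, based on Theorem \ref{completeness of rgl}) produces a model, which is countable since $\mathcal{L}^*$ is countable. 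Passing, if necessary, to the associated ultrametric structure via the pre-structure quotient lemma gives a countable ultrametric $\mathcal{C}$, and Lemma \ref{tozih} then yields weak elementary embeddings $i_B:\mathcal{B}\wembed\mathcal{C}$ and $i_D:\mathcal{D}\wembed\mathcal{C}$ sending $b\mapsto c_b^{\mathcal{C}}$ and $d\mapsto c_d^{\mathcal{C}}$; the identification of the $\mathcal{E}$-constants forces $i_B\circ j=i_D\circ k$.

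To see that $T$ is strongly consistent I would argue by contradiction. A failure produces finite $\Delta_B\subseteq wediag(\mathcal{B})$ and $\Delta_D\subseteq wediag(\mathcal{D})$ with $\Delta_B\cup\Delta_D\vdash\bar r$ for some $r>0$. Let $\alpha(\bar z,\bar x)$ be the $\mathcal{L}$-formula obtained from $\bigwedge\Delta_B$ by replacing the $\mathcal{B}$-only constants with variables $\bar z$ and keeping the shared $\mathcal{E}$-constants as parameters. Using Lemma \ref{godelthm}(iv) and the deduction theorem, $\Delta_D\vdash\alpha(\bar c_b,\overline{c_{k(e)}})\to\bar r$, and since the constants $\bar c_b$ do not occur in $\Delta_D$, generalization on constants gives $\Delta_D\vdash\forall\bar z\,(\alpha(\bar z,\overline{c_{k(e)}})\to\bar r)$. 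By soundness, evaluating in $\mathcal{D}$ (where $\overline{c_{k(e)}}$ names $k(\bar e)$) yields $\alpha^{\mathcal{D}}(\bar z,k(\bar e))\ge\hat r$ for every $\bar z$, hence $\bigl(\exists\bar z\,\alpha(\bar z,k(\bar e))\bigr)^{\mathcal{D}}\ge\hat r>\hat 0$. On the other hand, since $\bigwedge\Delta_B$ is true in $\mathcal{B}$ at the witnesses $\bar b$, we have $\mathcal{B}\models\exists\bar z\,\alpha(\bar z,j(\bar e))$. Here I use the special feature of $\mathbb{I}$ that the only truth value with standard part $0$ is $\hat 0$, so that $\mathcal{M}\models\psi$ is equivalent to $st_{\mathcal{M}}(\psi)=0$; consequently the weak elementary embeddings $j$ and $k$, which preserve $st$, preserve satisfaction. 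Transferring $\exists\bar z\,\alpha(\bar z,\cdot)$ along $j$ into $\mathcal{E}$ and then along $k$ into $\mathcal{D}$ gives $\bigl(\exists\bar z\,\alpha(\bar z,k(\bar e))\bigr)^{\mathcal{D}}=\hat 0$, contradicting the previous inequality.

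Finally, for the second stage I would apply Lemma \ref{strong embeding} with $\mathcal{M}=\mathcal{B}$, $\mathcal{N}=\mathcal{D}$ and target $\mathcal{C}$: it furnishes a structure $\mathcal{G}$ with the same universe and the same function and constant interpretations as $\mathcal{C}$, only the predicate interpretations rescaled by an order preserving $h$, such that $i_B:\mathcal{B}\wembed\mathcal{G}$ remains weak elementary while $i_D:\mathcal{D}\hookrightarrow\mathcal{G}$ becomes elementary. Setting $j_1=i_B$ and $k_1=i_D$, the underlying set maps are unaffected by the rescaling, so $j_1\circ j=i_B\circ j=i_D\circ k=k_1\circ k$, as required. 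The main obstacle is the strong-consistency argument of the first stage: the existential passed from $\mathcal{B}$ to $\mathcal{D}$ is an infimum that need not be attained, so one cannot simply exhibit a witness; the point is that consistency of the combined diagram is forced \emph{syntactically}, through generalization on constants and soundness, while the equivalence between having value $\hat 0$ and having standard part $0$ in $\mathbb{I}$ is exactly what transports satisfaction across the weak elementary embeddings.
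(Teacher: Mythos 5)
Your proposal is correct and follows essentially the same route as the paper: identify the $E$-constants, show that $wediag(\mathcal{B})\cup wediag(\mathcal{D})$ is consistent by transferring the existential quantifier over the $B$-constants through $\mathcal{E}$ into $\mathcal{D}$, take a model of the joint diagram, and invoke Lemma \ref{strong embeding} to upgrade the embedding of $\mathcal{D}$ to an elementary one while leaving the underlying maps (hence the commutativity $j_1\circ j=k_1\circ k$) untouched. The only divergence is that you establish consistency of the joint diagram syntactically (deduction theorem, generalization on constants, soundness), whereas the paper argues semantically that $wediag(\mathcal{D})\cup\{\bar{r}\to\varphi:0<r\le 1\}$ is finitely satisfiable via witnesses $\bar{d}_r$ approximating the infimum and then appeals to compactness and Lemma \ref{maxiaml complete set}; both arguments rest on the same observation you make explicit, namely that weak elementary embeddings preserve satisfaction because $\hat{0}$ is the unique element of $\mathbb{I}$ with standard part $0$.
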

\begin{proof}
Let $\mathcal{L}_B=\mathcal{L}(E)\cup\{c_b: b\in B\setminus j(E)\}$,
$\mathcal{L}_D=\mathcal{L}(E)\cup\{c_d: d\in D\setminus j(E)\}$ and
$\mathcal{L}'=\mathcal{L}_B\cup\mathcal{L}_D$. Without loss of
generality, we may assume that
$\mathcal{L}_B\cap\mathcal{L}_D=\mathcal{L}(E)$. One can naturally
interpret the new constants $c_e$ and $c_b$ for $e\in E$ and $b\in
B$ inside the $\mathcal{L}$-structure $\mathcal{B}$ by $j(e)$ and
$b$, respectively. Also, $\mathcal{B}$ can be considered as an
$\mathcal{L}_B$-structure. We want to show that
$wediag(\mathcal{B})\cup wediag(\mathcal{D})$ is a satisfiable
$\mathcal{L}'$-theory.

For a given $\varphi(c_{e_1}, ..., c_{e_n}, c_{b_1}, ...,c_{b_m})\in
wediag(\mathcal{B})$, we have $\mathcal{B}\models\exists\bar{x}
\varphi(c_{e_1}, ..., c_{e_n}, \bar{x})$. So,
$\mathcal{B}\models\exists\bar{x} \varphi(j(e_1), ..., j(e_n),
\bar{x})$. Moreover, since $j:\mathcal{E}\wembed\mathcal{B}$ and
$k:\mathcal{E}\wembed\mathcal{D}$, it follows that
$\mathcal{E}\models\exists\bar{x} \varphi(e_1, ..., e_n, \bar{x})$
and therefore, $\mathcal{D}\models\exists\bar{x} \varphi(k(e_1),
..., k(e_n), \bar{x})$, i.e., $\inf_{\bar{d}\in
D}\varphi^{\mathcal{D}}(k(e_1), ..., \bar{d})=\hat{0}$. Hence, for
any rational number $0<r\le 1$, there exists $\bar{d}_r\in D$ such
that $\varphi^{\mathcal{D}}(k(e_1), ..., \bar{d}_r)\le\hat{r}$.
Thus, the following set
\begin{center}
$wediag(\mathcal{D})\cup\{\bar{r}\to\varphi(c_{e_1}, ..., c_{e_n},
c_{b_1}, ..., c_{b_m}):~0<r\le 1\}$
\end{center}
is finitely satisfiable. Therefore, by the Lemma \ref{maxiaml
complete set}
\begin{center}
$wediag(\mathcal{D})\cup\{\varphi(c_{e_1}, ..., c_{e_n}, c_{b_1},
..., c_{b_m})\}$
\end{center}
is strongly consistent. This leads us to show that
$wediag(\mathcal{B})\cup wediag(\mathcal{D})$ is satisfiable.

Now, any model $\mathcal{G}\models wediag(\mathcal{B})\cup
wediag(\mathcal{D})$ gives two weak elementary embeddings
$j_1:\mathcal{B}\wembed\mathcal{G}$ and
$k_1:\mathcal{D}\wembed\mathcal{G}$ such that $j_1\circ j=k_1\circ
k$. Furthermore, by Lemma \ref{strong embeding}, $k_1$ can be
assumed to be an elementary embedding.
\end{proof}
%*****************************************************************************
%&&&&&&&&&&&&&&&&&&&&&&&&&&&&&&&&&&&&&&&&&&&&&&&&&&&&&&&&&&&&&&&&&&&&&&&&&&&&&&&7
Below, the notion of direct limit of a family of sets and functions
is defined and then, using this concept, the notion of pseudo-direct
limit of a weak elementary chain of $\mathcal{L}$-structures is
introduced.
\begin{defn}\label{union of chain}
Let $(M_i)_{i\in\mathbb{N}}$ be a family of pairwise disjoint sets.
Suppose also $(f_{i,j}:M_i\rightarrow M_j)_{i\leq j\in\mathbb{N}}$
is a family of functions. Call
${\mathcal{F}}=\{(M_i)_{i\in\mathbb{N}}, (f_{i,j})_{i\leq
j\in\mathbb{N}}\}$ a \em{direct system}, if the following properties
hold:
\begin{itemize}
\item[] $f_{i,i}=id_{M_i}$ for any $i\in\mathbb{N}$,
\item[] $f_{i,j}=f_{k,j}\circ f_{i,k}$ for any $i\le k\le j\in\mathbb{N}$.
\end{itemize}
For any direct system ${\mathcal{F}}=\{(M_i)_{i\in\mathbb{N}},
(f_{i,j})_{i\leq j\in\mathbb{N}}\}$, define an equivalence relation
$\equiv_{_\mathcal{F}}$ on $M_\infty=\bigcup_{i\in\mathbb{N}}M_i$ as
follows. For any $a\in M_i$ and $b\in M_j$,
\begin{center}
$a\equiv_\mathcal{F}b$ iff there exists $k\geq i,j$ such that
$f_{i,k}(a)=f_{j,k}(b)$.
\end{center}
The equivalence classes of $\equiv_{_\mathcal{F}}$ is denoted by
$[a]_{_\mathcal{F}}$. The set of all equivalence classes of
$\equiv_{_\mathcal{F}}$ is also denoted by $\lim_{\mathcal{F}} M_i$.
\end{defn}
It is customary to call $\lim_{\mathcal{F}} M_i$ the direct limit of
the direct system $\mathcal{{F}}$.
\begin{defn}
Suppose that $(\mathcal{M}_i)_{i\in\mathbb{N}}$ is a family of
$\mathcal{L}$-structures and
$(f_{i,j}:\mathcal{M}_i\wembed\mathcal{M}_j)_{i\leq j\in\mathbb{N}}$
is a family of weak elementary embeddings. We call
$\mathfrak{F}=\{\mathcal{M}_i,f_{i,j}\}$ a weak direct system of
$\mathcal{L}$-structures if ${\mathcal{F}}=\{(M_i)_{i\in\mathbb{N}},
(f_{i,j})_{i\leq j\in\mathbb{N}}\}$ is a direct system.
\end{defn}
\begin{lem}(Union of chain)\label{union of chain th}
Let $\mathfrak{F}=\{\mathcal{M}_i,f_{i,j}\}$ be a weak direct system
of $\mathcal{L}$-structures. There exist an $\mathcal{L}$-structure
$\mathcal{M}_\infty$ whose underlying universe is
$\lim_{\mathcal{F}}M_i$ and a family
$\{h_i:\mathcal{M}_i\wembed\mathcal{M}_\infty\}_{i\in\mathbb{N}}$ of
weak elementary embeddings such that $h_j\circ f_{i,j}=h_i$ for any
$i\le j\in\mathbb{N}$.
\end{lem}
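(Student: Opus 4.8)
The plan is to realize $\mathcal{M}_\infty$ directly on the direct limit set $\lim_{\mathcal{F}}M_i$ of Definition \ref{union of chain} and to recover the predicate values by transporting a coherent order on formula--values into $\mathbb{I}$, exactly as in Theorem \ref{embedding}. First I would set $h_i(a)=[a]_{\mathcal{F}}$ and interpret every constant and function symbol by choosing representatives and computing in the relevant $\mathcal{M}_k$; this is well defined and satisfies $h_j\circ f_{i,j}=h_i$ because each $f_{i,j}$ preserves function symbols (Definition \ref{weak def}(2a)) and $a\equiv_{\mathcal{F}}f_{i,j}(a)$. The only real work is the interpretation of the predicate symbols (including $d$).

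Here is the obstacle. A weak elementary embedding preserves only $st$, that is, only the first coordinate of each formula--value, and the various $\mathcal{M}_k$ genuinely disagree on the second (infinitesimal) coordinate; so one cannot simply copy $P^{\mathcal{M}_k}$ from any fixed level, the more so since that second coordinate affects the first coordinate of an implication and a na\"ive choice would corrupt $st$ of $\to$--formulas. What rescues the construction is the observation --- implicit in properties (1),(2) of Lemma \ref{strong embeding} --- that a weak embedding in fact preserves the \emph{entire} lexicographic comparison of any two formula--values. Indeed $st$ of $\varphi\to\psi$ already decides whether $\varphi^{\mathcal{M}}\ge\psi^{\mathcal{M}}$, ties included, because $\mathbb{I}$ contains no point $(0,r)$ with $r>0$. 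Consequently the comparison of $\varphi^{\mathcal{M}_j}$ and $\psi^{\mathcal{M}_j}$ at a common level is independent of the level, and all formula--values over the whole system organize into a single countable linear order $\mathcal{V}$, inside which the classes of the nullary connectives $\bar r$ occupy the points $\hat r$.

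Next, by the technique of Theorem \ref{embedding} I would produce a continuous (that is, supremum-- and infimum--preserving) order preserving map $g:\mathcal{V}\to\mathbb{I}$ with $g(\hat r)=\hat r$, and define $P^{\mathcal{M}_\infty}$ and $d^{\mathcal{M}_\infty}$ to be $g$ applied to the class of the corresponding atomic value. A routine induction then gives $\varphi^{\mathcal{M}_\infty}([\bar a])=g(\langle\varphi,[\bar a]\rangle)$ for every formula: the propositional connectives pass through $g$ because $g$ is an order embedding fixing $\hat 0$, while the quantifier step uses the continuity of $g$ together with the level--independence of the relevant infimum. The latter is the crux of the quantifier case: since each $f_{i,j}$ preserves $st$ of the quantified formula, the value of $\inf_{c\in M_j}\varphi^{\mathcal{M}_j}$ does not change as $j$ grows, so the infimum taken over all of $\lim_{\mathcal{F}}M_i$ coincides with the one taken at a single level, and similarly for $\forall$ and suprema.

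Finally, since $g$ fixes every $\hat r$ and is order preserving it preserves $st$, whence $st_{\mathcal{M}_\infty}(\varphi(h_i(\bar a)))=st_{\mathcal{M}_i}(\varphi(\bar a))$; together with preservation of function symbols this shows each $h_i$ is a weak elementary embedding with $h_j\circ f_{i,j}=h_i$. That $d^{\mathcal{M}_\infty}$ is an extended ultrametric follows because the similarity axioms are order/$st$ facts transported by $g$, and $d^{\mathcal{M}_\infty}([a],[b])=\hat 0$ forces $d^{\mathcal{M}_j}(a,b)=\hat 0$ at a common level $j$ (as $g$ fixes $\hat 0$), so $a=b$ in the ultrametric structure $\mathcal{M}_j$ and hence $[a]=[b]$. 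I expect the main difficulty to be precisely the tension between the non--preserved second coordinate and both implication and the quantifier infima; the whole point is to replace ``pick a representative value'' by ``transport the coherent order through the continuous map $g$'', exactly as in Theorem \ref{embedding} and Lemma \ref{strong embeding}, and to exploit the level--independence of infima coming from weak elementarity. If needed, one uses that the $\mathcal{M}_i$, and hence $\lim_{\mathcal{F}}M_i$ and $\mathcal{V}$, are countable, which is what powers the construction of $g$.
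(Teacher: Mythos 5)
Your construction is sound, but it follows a genuinely different route from the paper's. The paper proceeds syntactically: it introduces pairwise disjoint constant sets $C_i=\{c_m:m\in M_i\}$, forms the theory $\Sigma=\bigcup_i\Gamma_i\cup\bigcup_i\Delta_i\cup\bigcup_i S_i$ (weak elementary diagrams, the identification axioms $d(c_m,\widetilde{f}_{i,j}(c_m))$, and the universal sentences $S_i$ that encode level-independence of $\forall$-values), proves $\Sigma$ satisfiable by compactness using the expansions $\widehat{\mathcal{M}}_n$, checks it is Henkin, extends it to a maximally linear complete Henkin theory, and then reads the model off the term construction of the completeness theorem, identifying the universe with $\lim_{\mathcal{F}}M_i$ only at the end. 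You instead build $\mathcal{M}_\infty$ directly on $\lim_{\mathcal{F}}M_i$, and your key observation --- that $st_{\mathcal{M}}(\varphi\to\psi)=0$ iff $\varphi^{\mathcal{M}}\ge\psi^{\mathcal{M}}$ because $\mathbb{I}$ omits the points $(0,r)$, $r>0$, so weak elementary embeddings preserve the full lexicographic comparison of formula values --- is exactly what underlies properties (1),(2) in the proof of Lemma \ref{strong embeding}; you are, in effect, generalizing that lemma's two-structure order-transport to the whole chain at once via a single countable linear order $\mathcal{V}$ and a continuous $g:\mathcal{V}\to\mathbb{I}$ fixing each $\hat{r}$. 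What the paper's route buys is wholesale reuse of the compactness/Henkin machinery (the quantifier clauses come for free from the $S_i$ and the Henkin property) at the price of heavy bookkeeping; what your route buys is an explicit, self-contained description of exactly which data the limit remembers, at the price of having to redo the continuity analysis of Theorem \ref{embedding} for $\mathcal{V}$ and to verify the quantifier step by hand. Two places in your argument deserve the most care: first, the claim that $\inf$ in $\mathcal{V}$ of the instance-classes $\langle\varphi,(c,\bar{a})\rangle$ over all $[c]\in\lim_{\mathcal{F}}M_i$ equals the class of $\exists y\,\varphi(y,\bar{a})$ needs the greatest-lower-bound property inside the abstract order $\mathcal{V}$ (any lower bound, compared at a common level $j$, lies below $\inf_{c\in M_j}\varphi^{\mathcal{M}_j}$ because every element of $M_j$ occurs among the instances), not merely the level-independence of $st$; second, Theorem \ref{embedding} relied on the padding of Lemma \ref{sub2} to make the glued map continuous at the boundaries of the blocks $\mathbb{I}_a$, and for $\mathcal{V}$ you must check (or arrange) the analogous minimum/maximum conditions --- though this is the same point the paper itself elides in Lemma \ref{strong embeding}, so it is not a gap relative to the paper's own standard of rigor.
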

\begin{proof}
For any $i\in\mathbb{N}$, let $C_i=\{c_m: m\in M_i\}$ be a set of
new constant symbols and suppose further, that
$(C_i)_{i\in\mathbb{N}}$ are pairwise disjoint. Let
$\mathcal{L}_n=\mathcal{L}\cup\bigcup_{i=1}^n C_i$ and
$\mathcal{L}_\infty=\mathcal{L}\cup\bigcup_{i=1}^\infty C_i$. Also,
for any $i\le j\in\mathbb{N}$, the function
$\widetilde{f}_{i,j}:C_i\to C_j$ is defined by
$\widetilde{f}_{i,j}(c_m)=c_{f_{i,j}(m)}$.\\
For every $i\in\mathbb{N}$, set
\begin{center}
$\Gamma_i=\{\varphi(c_{m_1},\dots,c_{m_n}): \varphi(m_1,\dots,m_n)\in wediag(\mathcal{M}_i)\}$,\\
$\Delta_i=\{d(c_m,\widetilde{f}_{i,j}(c_m)): i\leq j\in\mathbb{N}, c_m\in C_i\}$.\\
\end{center}
Also, for every $\mathcal{L}_\infty$-formula $\varphi(x,\bar{c})$,
take $n_\varphi$ to be the least natural number such that
$\varphi(x,\bar{c})$ is an $\mathcal{L}_{n_\varphi}$-formula and put
\begin{center}
$\widetilde{\varphi}(x,\bar{c})=
\varphi(x,\widetilde{f}_{i_1,n_\varphi}(c_{m_{i_1}}), ...,
c_{m_{n_\varphi}})$,
\end{center}
where $\bar{c}=(c_{m_{i_1}}, ..., c_{m_{n_\varphi}})$ and $i_1\le
...\le n_\varphi$. For any $i\in\mathbb{N}$, let
\begin{center}
$S_i=\{\forall x\,\varphi(x,\bar{c}): \varphi(x,\bar{c})\in
Form(\mathcal{L}_i)\mbox{ and }\forall
x\,\widetilde{\varphi}(x,\bar{c})\in\Gamma_{n_\varphi}\}$.
\end{center}
Note that if $\forall x\,\varphi(x,\bar{c})\in S_i$ then
$n_\varphi\le i$. In the sequel, we show that
\begin{center}
$\Sigma=\displaystyle\bigcup_{i=1}^\infty\Gamma_i\cup
\bigcup_{i=1}^\infty\Delta_i\cup\bigcup_{i=1}^\infty S_i$
\end{center}
is a satisfiable Henkin $\mathcal{L}_\infty$-theory and has a model
$\mathcal{M}$ with the universe
$M=\lim_{\widetilde{\mathcal{F}}}C_i$, where
$\widetilde{\mathcal{F}}=\{(C_i)_{i\in\mathbb{N}},
(\widetilde{f}_{i,j})_{i\leq j\in\mathbb{N}}\}$.\\
To show that $\Sigma$ is satisfiable, for any $n\in\mathbb{N}$,
consider the $\mathcal{L}_n$-theory
\begin{center}
$\Sigma_n=\displaystyle\bigcup_{i=1}^n \Gamma_i\cup
\bigcup_{i=1}^n\Delta_i\cup\bigcup_{i=1}^n S_i$.
\end{center}
Let $\widehat{\mathcal{M}}_n$ be the expansion of the
$\mathcal{L}$-structure $\mathcal{M}_n$ to an
$\mathcal{L}_n$-structure such that
$c_m^{\widehat{\mathcal{M}}_n}=f_{i,n}(m)$ for any $c_m\in C_i$,
$i\le n$. We claim that $\widehat{\mathcal{M}}_n\models
d(c_m,\widetilde{f}_{i,n}(c_m))$. To see this let $m'=f_{i,n}(m)$.
Since for any $c_m\in C_i$, $f_{i,n}(m)\in C_n$, we have
\begin{center}
$(\widetilde{f}_{i,n}(c_m))^{\widehat{\mathcal{M}}_n}=c^{\widehat{\mathcal{M}}_n}_{m'}=f_{n,n}(m')=
f_{n,n}(f_{i,n}(m))=f_{i,n}(m)$.
\end{center}
So, $\widehat{\mathcal{M}}_n\models\Delta_i$ for any $i\le n$.\\
Furthermore, for any $n\ge n_\varphi$ whenever $\forall
x\,\varphi(x,\bar{c})\in\bigcup_{i=1}^n S_i$, $\forall
x\,\widetilde{\varphi}(x,\bar{c})\in\Gamma_{n_\varphi}$. Hence,
$\mathcal{M}_{n_\varphi}\models\forall
x\,\widetilde{\varphi}(x,\bar{c})$. This implies
$\widehat{\mathcal{M}}_n\models\forall x\,\varphi(x,\bar{c})$ and
$\widehat{\mathcal{M}}_n\models\Sigma_n$. So, $\Sigma_n$ is
satisfiable and by the compactness theorem, $\Sigma$ is satisfiable.\\
Secondly, to verify that $\Sigma$ is Henkin, let
$\Sigma\nvdash\forall x\,\varphi(x,\bar{c})$. In particular,
$\forall x\,\varphi(x,\bar{c})\notin\Sigma$. So, $\forall
x\,\widetilde{\varphi}(x,\bar{c})\notin\Gamma_{n_\varphi}$ and
therefore, $\big(\forall
x\,\widetilde{\varphi}(x,\bar{c})\big)^{\mathcal{M}_{n_\varphi}}>\hat{0}$.
This means that there is a rational number $r>0$ such that
$\big(\forall
x\,\widetilde{\varphi}(x,\bar{c})\big)^{\mathcal{M}_{n_\varphi}}>\hat{r}$.
Thus, there exists $d\in M_{n_\varphi}$ such that
$\big(\widetilde{\varphi}(c_d,\bar{c})\big)^{\mathcal{M}_{n_\varphi}}\ge\hat{r}$.
So,
$\big(\widetilde{\varphi}(c_d,\bar{c})\to\bar{r}\big)^{\mathcal{M}_{n_\varphi}}=\hat{0}$,
that is
\begin{gather}\label{u1}
\big(\varphi(c_d,\bar{c})\to\bar{r}\big)^{\widehat{\mathcal{M}}_{n_\varphi}}=\hat{0}.
\end{gather}
Hence, $\Sigma\nvdash\varphi(c_d,\bar{c})$. If not, then for some
$n\ge n_\varphi$, $\Sigma_{n}\vdash\varphi(c_d,\bar{c})$. Thus, by
soundness $\Sigma_{n}\models\varphi(c_d,\bar{c})$. Now, since
$\widehat{\mathcal{M}}_{n}\models\Sigma_{n}$ it follows that
$\varphi^{\widehat{\mathcal{M}}_n}(c_d,\bar{c})=\hat{0}$. But, this
contradicts with (\ref{u1}).\\
Take an $\mathcal{L}_\infty$-theory $\Sigma'$ to be a maximally
linear complete extension of $\Sigma$. This extension remains
Henkin. If $ \Sigma'\nvdash \forall x\
\varphi(x,c_{m_1},\dots,c_{m_n})$ then $\Sigma\nvdash \forall x\
\varphi(x,c_{m_1},\dots,c_{m_n})$. Now, as $\Sigma$ is Henkin, by a
similar argument used in the above paragraph there exist a constant
$c$ and a rational number $r>0$ such that
$\varphi(c,c_{m_1},\dots,c_{m_n})\to\bar{r}\in\Sigma$. Thus,
$\Sigma'\vdash \varphi(c,c_{m_1},\dots,c_{m_n})\to\bar{r}$, that is
$\Sigma'\nvdash
\varphi(c,c_{m_1},\dots,c_{m_n})$.\\
The method used in the proof of the completeness theorem implies
that there is an $\mathcal{L}_\infty$-structure $\mathcal{M}$ whose
universe is the interpretation of the set of constant symbols
$\cup_{i\in\mathbb{N}}C_i$. One can easily see that
$M=\lim_{\mathcal{F}}M_i$. And, moreover, for any $i\in\Bbb{N}$, the
function $h_i:\mathcal{M}_i\wembed\mathcal{M}$ defined by
$h_i(m)=c_m^\mathcal{M}$ is a weak elementary embedding.
\end{proof}
\begin{rem}
On the basis of the above lemma, call
$\{\mathcal{M}_\infty,(h_i:\mathcal{M}_i\wembed\mathcal{M}_\infty)_{i\in\mathbb{N}}\}$
a pseudo-direct limit, since the proof does not guarantee the
uniqueness of the desired structure. Hence, it may be easily seen
that if $\mathcal{M}_\infty$ and $\mathcal{N}_\infty$ are two
pseudo-direct limit of a direct system, then
$\mathcal{M}_\infty\equiv_h\mathcal{N}_\infty$ for some
$h:\mathbb{I}\to\mathbb{I}$.
\end{rem}
%&&&&&&&&&&&&&&&&&&&&&&&&&&&&&&&&&&&&&&&&&&&&&&&&&&&&&&&&&&&&&&&&&&&&&&&&&&&&&&&&77
\begin{thm}(Robinson consistency theorem)\label{Robinson}
Let $\mathcal{L}$, $\mathcal{L}_1$ and $\mathcal{L}_2$ be three
ultrametric languages such that $\mathcal{L}= \mathcal{L}_1\cap
\mathcal{L}_2$. Suppose for $i=1, 2$, $T_i$ is a satisfiable
$\mathcal{L}_i$-theory, both including a linear complete
$\mathcal{L}$-theory and $T$. Then, $T_1\cup T_2$ is satisfiable
$\mathcal{L}_1\cup\mathcal{L}_2$-theory.
\end{thm}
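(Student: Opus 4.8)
The plan is to prove the Robinson joint consistency theorem via the standard back-and-forth amalgamation technique, building a weak elementary chain whose pseudo-direct limit simultaneously models both $T_1$ and $T_2$. The tools developed in this section—the amalgamation property (Theorem~\ref{amalgamation th}) and the union-of-chain lemma (Lemma~\ref{union of chain th})—are precisely tailored for this purpose, so the proof should assemble them into an alternating construction.

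First I would fix models $\mathcal{A}_0 \models T_1$ and $\mathcal{B}_0 \models T_2$. By hypothesis both $T_1$ and $T_2$ extend a common linear complete $\mathcal{L}$-theory, so the $\mathcal{L}$-reducts $\mathcal{A}_0 \upharpoonright \mathcal{L}$ and $\mathcal{B}_0 \upharpoonright \mathcal{L}$ are weakly elementary equivalent (their common linear completeness forces the truth degrees $st$ of all $\mathcal{L}$-sentences to agree). Invoking the downward L\"owenheim--Skolem argument implicit in the completeness construction, I may assume all structures are countable. The strategy is then to construct an alternating chain
\begin{gather*}
\mathcal{A}_0 \wembed \mathcal{B}_1 \wembed \mathcal{A}_2 \wembed \mathcal{B}_3 \wembed \cdots
\end{gather*}
where each $\mathcal{A}_{2n}$ is (the $\mathcal{L}_1$-reduct of) a model of $T_1$, each $\mathcal{B}_{2n+1}$ is (the $\mathcal{L}_2$-reduct of) a model of $T_2$, and consecutive maps are weak elementary $\mathcal{L}$-embeddings. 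To pass from $\mathcal{A}_{2n}$ to $\mathcal{B}_{2n+1}$ I would apply amalgamation over the common $\mathcal{L}$-reduct: since $\mathcal{A}_{2n}\upharpoonright\mathcal{L}$ and the previous $\mathcal{L}$-reduct are weakly elementary equivalent, Theorem~\ref{amalgamation th} supplies a common $\mathcal{L}$-structure into which both embed weakly elementarily, and this structure can be expanded to a model of $T_2$ via the weak elementary diagram machinery (Lemma~\ref{tozih}), guaranteeing the next model of $T_2$ continues to carry the $\mathcal{L}_1$-data of $\mathcal{A}_{2n}$ as a weak elementary substructure.

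Taking the pseudo-direct limit $\mathcal{M}_\infty$ of this chain via Lemma~\ref{union of chain th} yields a structure with weak elementary embeddings $h_i:\mathcal{M}_i \wembed \mathcal{M}_\infty$ for every $i$. The even-indexed subchain is cofinal and consists of $T_1$-models, so $\mathcal{M}_\infty$ (read in the language $\mathcal{L}_1$) is weakly elementarily equivalent to a model of $T_1$; likewise the odd-indexed subchain makes its $\mathcal{L}_2$-reduct weakly elementarily equivalent to a model of $T_2$. Because truth degree $st$ equal to zero is preserved under weak elementary embeddings, every sentence of $T_1$ and of $T_2$ is satisfied in $\mathcal{M}_\infty$, so $\mathcal{M}_\infty \models T_1 \cup T_2$, which is what we want.

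The hard part will be \emph{guaranteeing that the limit structure's interpretation of the predicates is simultaneously faithful to both sides}—that is, ensuring the weak elementary embeddings can be upgraded to genuine embeddings on one side at each amalgamation step so that the truth values (not merely their $st$-degrees) line up in the limit. This is exactly the subtlety addressed by Lemma~\ref{strong embeding}: the continuous order-preserving rescaling $h:\mathbb{I}\to\mathbb{I}$ there converts a weak elementary embedding into an honest elementary one by reinterpreting predicates. I would therefore arrange the construction so that at each odd step the newly incorporated $T_2$-model embeds \emph{elementarily} (via the $h$-equivalence of Lemma~\ref{strong embeding}) while the accumulated $T_1$-part is retained weak elementarily, and symmetrically at even steps. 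Reconciling these two rescalings coherently along the whole chain, so that no truth-value information collapses in the direct limit, is where the delicate bookkeeping lies; the remark following Lemma~\ref{union of chain th} that pseudo-direct limits are determined only up to $\equiv_h$ is what ultimately makes this reconciliation harmless for satisfiability.
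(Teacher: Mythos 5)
Your overall strategy is the same as the paper's: an alternating back-and-forth construction using the amalgamation machinery, a pseudo-direct limit via Lemma~\ref{union of chain th}, and a final $h$-rescaling in the sense of Lemma~\ref{strong embeding} and the remark on $\equiv_h$. However, there is a genuine gap at the decisive step, and it is exactly the step you defer to ``delicate bookkeeping.'' Your chain $\mathcal{A}_0\wembed\mathcal{B}_1\wembed\mathcal{A}_2\wembed\cdots$ is connected only by weak elementary $\mathcal{L}$-embeddings, so Lemma~\ref{union of chain th} produces merely an $\mathcal{L}$-structure as the limit. To conclude that the limit models $T_1\cup T_2$ you must expand it to an $\mathcal{L}_1\cup\mathcal{L}_2$-structure, and your ``cofinal even subchain'' argument presupposes that the composite maps $\mathcal{A}_{2n}\to\mathcal{B}_{2n+1}\to\mathcal{A}_{2n+2}$ are weak elementary \emph{$\mathcal{L}_1$}-embeddings (and symmetrically for the odd subchain in $\mathcal{L}_2$); otherwise the even subchain is not a weak direct system of $\mathcal{L}_1$-structures and has no $\mathcal{L}_1$-limit at all. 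Nothing in your step description secures this: realizing the weak elementary diagram over $\mathcal{L}$ at each stage does not make ``the next model of $T_2$ carry the $\mathcal{L}_1$-data of $\mathcal{A}_{2n}$'' --- a $T_2$-model interprets no $\mathcal{L}_1\setminus\mathcal{L}$ symbols. One must build the construction so that each new $T_1$-model satisfies the $\mathcal{L}_1$-weak elementary diagram of the previous $T_1$-model together with the $\mathcal{L}$-diagram of the intervening $T_2$-model, with the constant interpretations commuting. This is precisely what the paper's two parallel chains $(\mathcal{M}_i)$ and $(\mathcal{N}_i)$ with crossing elementary $\mathcal{L}$-embeddings $f_i,g_i$ and the compatibility condition $(*)$ encode.

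A second, related omission: even once both the $\mathcal{L}_1$-limit $\mathcal{M}_\infty$ and the $\mathcal{L}_2$-limit $\mathcal{N}_\infty$ exist, they are a priori two different structures, and weak elementary embeddings preserve only the truth degrees $st$, not the actual values in $\mathbb{I}$. To glue them into one $\mathcal{L}_1\cup\mathcal{L}_2$-structure you need their $\mathcal{L}$-reducts to be genuinely isomorphic --- in particular isometric, so that the identification is injective. The paper does this explicitly: it defines the bijection $H$ from the crossing maps, checks surjectivity, and then replaces $\mathcal{N}_\infty$ by an $h$-equivalent structure so that $H$ preserves truth values on the nose. Your proposal correctly names Lemma~\ref{strong embeding} and the $\equiv_h$ remark as the relevant tools, but the actual reconciliation is the content of the proof rather than an afterthought, and as written the argument does not go through without it.
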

\begin{proof}
Sine $T_1$ and $T_2$ are satisfiable theories there are an
$\mathcal{L}_1$-structure $\mathcal{M}_0\models T_1$ and an
$\mathcal{L}_2$-structure $\mathcal{N}_0\models T_2$. Since $T$ is
linear complete, by a similar argument used in the amalgamation
property, one may show that
$wediag_{_{\mathcal{L}}}(\mathcal{M}_0)\cup
wediag_{_{\mathcal{L}_2}} \mathcal({\mathcal{N}}_0)$ is satisfiable.
So, by Lemma \ref{strong embeding}, there exists an
$\mathcal{L}_2$-structure $\mathcal{N} _1\models T_2$ together with
some elementary $\mathcal{L}$-embedding
$f_0:\mathcal{M}_0\hookrightarrow_{_{\mathcal{L}}}\mathcal{N}_1$ and
weak elementary $\mathcal{L}_2$-embedding
$g_{0,1}:\mathcal{N}_0\wembed_{_{\mathcal{L}_2}}\mathcal{N}_1$.

$\mathcal{N}_1$ can be naturally viewed as an
$\mathcal{L}_2(M_0)$-structure where $c_m$ is interpreted by
$f_0(m)$, for any $m\in M_0$. So, for any
$\mathcal{L}(M_0)$-sentence $\varphi(c_{m_1}, ..., c_{m_i})$, we
have
\begin{center}
$\mathcal{M}_0\models\varphi(c_{m_1}, ..., c_{m_i})$ iff
$\mathcal{N}_1\models\varphi(c_{m_1}, ..., c_{m_i})$.
\end{center}
Thus, both $\mathcal{M}_0$ and $\mathcal{N}_1$ are models of the
same linear complete $\mathcal{L}(M_0)$-theory,
$Th_{_{\mathcal{L}(M_0)}}(\mathcal{M}_0)$. Hence, a similar argument
gives an $\mathcal{L}_1$-structure $\mathcal{M} _1\models T_1$, an
elementary $\mathcal{L}$-embedding
$g_1:\mathcal{N}_1\hookrightarrow_{_{\mathcal{L}}}\mathcal{M}_1$ and
a weak elementary $\mathcal{L}_1$-embedding
$f_{0,1}:\mathcal{M}_0\wembed_{_{\mathcal{L}_1}}\mathcal{M}_1$ such
that $g_1\circ f_0=f_{0,1}$.

By continuing this method, we get the following diagram shown in
Figure 1, in which
$f_{i,i+1}:\mathcal{M}_i\wembed_{_{\mathcal{L}_1}}\mathcal{M}_{i+1}$
and
$g_{i,i+1}:\mathcal{N}_i\wembed_{_{\mathcal{L}_2}}\mathcal{N}_{i+1}$
are weak elementary
embeddings.\\
\begin{center}
$\begin{array}{cccccccccccc}
\mathcal{M}_0&\wembed&\mathcal{M}_1&\wembed&\dots&\wembed&\mathcal{M}_{i}&\wembed&\mathcal{M}_{i+1}&\wembed&\dots\\
\displaystyle&\upa&\displaystyle\uparrow^{_{g_1}}&\upb&\dots&&\uparrow^{_{g_{i}}}&\upc&\uparrow^{_{g_{i+1}}}&\upd&\dots\\
\mathcal{N}_0&\wembed&\mathcal{N}_1&\wembed&\dots&\wembed&\mathcal{N}_{i}&\wembed&\mathcal{N}_{i+1}&\wembed&\dots
\end{array}$
\\
\vspace{3.5mm}Figure 1.\\
\end{center}
Also, $f_i: \mathcal{M}_i\hookrightarrow_{_\mathcal{L}
}\mathcal{N}_{i+1}$ and
$g_i:\mathcal{N}_i\hookrightarrow_{_\mathcal{L}}\mathcal{M}_{i}$ are
elementary embeddings. Furthermore,
\begin{center}
$\hspace{-2cm}(*)\hspace{1cm}\left\{
\begin{array}{cc}
g_{i+1}(f_i(a))=f_{i,i+1}(a)\hfill&\mbox{ for any }a\in M_i,\\
f_i(g_i(b))=g_{i,i+1}(b)\hfill&\mbox{ for any }b\in N_i.
\end{array}\right.$
\end{center}
One could construct the above diagram in such a way that for $i\ne
j$, $M_i\cap M_j=\emptyset$ (resp. $N_i\cap N_j=\emptyset$). For
$i\le j$, define
$f_{i,j}:\mathcal{M}_i\wembed_{_{\mathcal{L}_1}}{\mathcal{M}_j}$
(resp.
$g_{i,j}:\mathcal{N}_i\wembed_{_{\mathcal{L}_2}}{\mathcal{N}_j}$) so
that $\mathcal{F}=\{(M_i)_{i\in\mathbb{N}},(f_{i,j})_{i\le
j\in\mathbb{N}}\}$ (resp.
$\mathcal{G}=\{(N_i)_{i\in\mathbb{N}},(g_{i,j})_{i\le
j\in\mathbb{N}}\}$) forms a direct system.

Now, take pseudo-direct limits
$\{\mathcal{M}_\infty,(h_i:\mathcal{M}_i\wembed\mathcal{M}_\infty)_{i\in\mathbb{N}}\}$
and
$\{\mathcal{N}_\infty,(k_i:\mathcal{N}_i\wembed\mathcal{N}_\infty)_{i\in\mathbb{N}}\}$
whose underlying sets are $\lim_{\mathcal{F}}M_i$ and
$\lim_{\mathcal{G}}N_i$, respectively.

Define $H:M_\infty\rightarrow N_\infty$ by
$H([a]_{\mathcal{F}})=[f_i(a)]_{\mathcal{G}}$, for $a\in M_i$. By
($*$), the function $H$ is well-defined. To see that $H$ is
surjective, let $[b]_{\mathcal{G}}\in N$. By the construction of
$\mathcal{N}$, there is $j\in\Bbb{N}$ such that $b \in N_j$. Set
$b'=g_{j,j+1}(b)$ and $a=g_{j+1}(b')$. Then, we have
$H([a]_{\mathcal{F}})=[f_{j+1}(a)]_{\mathcal{G}}$. This means
\begin{center}
$f_{j+1}(a)=
f_{j+1}(g_{j+1}(b'))=g_{j+1,j+2}(b')=g_{j+1,j+2}(g_{j,j+1}(b))=g_{j,j+2}(b)$.
\end{center}
So, $f_{j+1}(a)\equiv_{\mathcal{G}}b$ and therefore,
$H([a]_{\mathcal{F}})=[b]_{\mathcal{G}}$.\\
Moreover, for every $\mathcal{L}$-formula $\varphi(\bar{x})$ and
$[a]_{\mathcal{F}}\in M_\infty$, $
\varphi^{\mathcal{M}_\infty}([a]_{\mathcal{F}})\approx\varphi^{\mathcal{N}_\infty}(H([a]_{\mathcal{F}}))$,
since
\begin{center}
$\varphi^{\mathcal{M}_\infty}([a]_{\mathcal{F}})\approx\varphi^{\mathcal{M}_n}(a)=\varphi^{\mathcal{N}
_{n+1}}(f_n(a))\approx\varphi^{\mathcal{N}_\infty}(H([a]_\mathcal{F})).$
\end{center}
Therefore, for some $h:\mathbb{I}\to\mathbb{I}$, we replace
$\mathcal{N}_\infty$ by another $\mathcal{L}_2$-structure
$\mathcal{N}$ such that $\mathcal{N}\equiv_h\mathcal{N}_\infty$ and
$\varphi^{\mathcal{N}}(H([a]_{\mathcal{F}}))=h(\varphi^{\mathcal{N}_\infty}(H([a]_{\mathcal{F}}))=
\varphi^{\mathcal{M}_\infty}([a]_{\mathcal{F}})$. Moreover, for
$x,y\in M_\infty$,
$d^{\mathcal{M}_\infty}(x,y)=d^{\mathcal{N}}(H(x),H(y))$. So, $H$ is
one to one.

Therefore, $H$ is an $\mathcal{L}$-isomorphism of
$\mathcal{M}_\infty$ onto $\mathcal{N}$. Hence, there is an
ultrametric $\mathcal{L}_1\cup\mathcal{L}_2$-structure $\mathcal{P}$
such that $\mathcal{P}|_{\mathcal{L}_1}=\mathcal{M}_\infty$ and
$\mathcal{P}|_{\mathcal{L}_2}\cong\mathcal{N}$. Thus,
$\mathcal{P}\models T_1\cup T_2$.
\end{proof}
\begin{rem}
The translation given in Remark \ref{translation} can be easily
applied to the present context to show that the Robinson joint
consistency theorem (Theorem \ref{Robinson}) holds for (fuzzy)
first-order rational \g logic with equality.
\end{rem}
\section{Future Works}
One should further develop the ultrametric logic to make it more
accessible for axiomatizing interesting mathematical structures such
as valued fields and $p$-adic fields. To this end, one may consider a
more general value space and perhaps some additional logical
connectives. Any possible approach should allow us to extend the
basic results such as completeness and compactness theorems even
for uncountable languages. Therefore, the extended
semantic should also include "uncountable" structures.

Another interesting topic of research is to study the Robinson
joint consistency theorem for first-order (rational) \g logic.

\section{Funding}
The second author was partially supported by a grant from IPM, grant number 92030118.

%Details of all funding sources for the work in question should be
%given in a separate section entitled 'Funding'. This should appear
%before the 'Acknowledgements' section.

%The following rules should be followed: •The sentence should begin:
%'This work was supported by …' •The full official funding agency
%name should be given, i.e. 'National Institutes of Health', not
%'NIH' (full RIN-approved list of UK funding agencies) Grant numbers
%should be given in brackets as follows: '[grant number xxxx]'
%•Multiple grant numbers should be separated by a comma as follows:
%'[grant numbers xxxx, yyyy]' •Agencies should be separated by a
%semi-colon (plus 'and' before the last funding agency) •Where
%individuals need to be specified for certain sources of funding the
%following text should be added after the relevant agency or grant
%number 'to [author initials]'. An example is given here: 'This work
%was supported by the National Institutes of Health [AA123456 to
%C.S., BB765432 to M.H.]; and the Alcohol & Education Research
%Council [hfygr667789].

\bibliographystyle{plain}
\bibliography{amin}
\end{document}